\newcommand{\dx}[1]{\; \mathrm{d} #1}
\newcommand{\sd}{\: : \:}
\newcommand{\defi}{\: \mathrel{\mathop{\raisebox{1pt}{\scriptsize$:$}}}= \:}
\newcommand{\dist}{\mathrm{dist}}
\newcommand{\tr}{\mathrm{tr}}
\newcommand{\Sym}{\mathrm{Sym}_{0}}
\newcommand{\id}{\mathrm{Id}}
\newcommand{\rhomin}{\rho_\mathrm{min}^\sigma}
\newcommand{\N}{\mathcal{N}}
\newcommand{\E}{\mathcal{E}}
\newcommand{\Eex}{\mathcal{E}_{\eta,\xi}}
\renewcommand{\C}{\mathcal{C}} 
\newcommand{\R}{\mathcal{R}}
\newcommand{\M}{\mathcal{M}}
\newcommand{\A}{\mathcal{A}}
\newcommand{\Qex}{Q_{\eta,\xi}}
\newcommand{\nn}{\mathbf{n}}
\newcommand{\mm}{\mathbf{m}}
\newcommand{\xx}{\mathbf{x}}
\newcommand{\ee}{\mathbf{e}}
\newtheorem{theorem}{Theorem}[section]
\newtheorem{definition}[theorem]{Definition}
\newtheorem{proposition}[theorem]{Proposition}
\newtheorem{lemma}[theorem]{Lemma}
\newtheorem{remark}[theorem]{Remark}
\newtheorem{corollary}[theorem]{Corollary}
\newtheoremstyle{name_and_space}{11pt}{20pt}{\itshape}{}{\bfseries}{}{.5em}{#1 #2 #3}
\theoremstyle{name_and_space}
\newif\ifARMA
\begin{document}

\title{The saturn ring effect in nematic liquid crystals with external field: effective energy and hysteresis}

\date\today
\author[1]{François Alouges\ifARMA
\thanks{ORCID: 0000-0003-2899-1427 }
\fi}
\author[1]{Antonin Chambolle
\ifARMA
\thanks{ORCID: 0000-0002-9465-4659; corresponding author; antonin.chambolle@cmap.polytechnique.fr }
\fi}
\author[1]{Dominik Stantejsky \ifARMA
\thanks{ORCID: 0000-0001-7594-2922}
\fi}
\affil[1]{Centre de Mathématiques Appliquées, UMR CNRS 7641, École Polytechnique, IP-Paris 91128 Palaiseau Cedex, France}

\parskip 6pt

\maketitle

\begin{abstract}
In this work we consider the Landau-de Gennes model for  liquid crystals with an external electromagnetic field to model the occurrence of the saturn ring effect under the assumption of rotational equivariance. After a rescaling of the energy, a variational limit is derived. Our analysis relies on precise estimates around the singularities and the study of a radial auxiliary problem in regions, where a continuous director field exists. Studying the limit problem, we explain the transition between the dipole and saturn ring configuration and the occurence of a hysteresis phenomenon, giving a rigorous explanation of what was conjectured previously by [H. Stark, Eur. Phys. J. B 10, 311–321 (1999)]. \\
\linebreak
\textbf{Keywords:}  Calculus of variations, liquid crystals, Landau-de Gennes model, hysteresis
\linebreak
\textbf{MSC2020:} 35B40, 35J50, 49J45, 49S05, 76A15
\end{abstract}

\section*{Introduction}
\addcontentsline{toc}{section}{Introduction}

Liquid crystals represent a type of matter with properties intermediate between liquids and crystalline solids. They can be thought of as rod like molecules whose positional and orientational order may vary within space, time and parameters such as temperature. For a general and complete introduction, we refer to \cite{Andrienko2018,Handbook2014}. Depending on the alignment of the molecules and its symmetries, liquid crystals are generally divided into nematic, smectic and cholesteric. Due to their unique properties, liquid crystals exhibit remarkable structures and applications, see for example \cite{Kleman2006,Machon2018,Musevic2017}.

From a mathematical point of view, several models have been introduced to study the phenomena arising from liquid crystals \cite{Ball2017a}. Roughly speaking, the Oseen-Frank model describes liquid crystals by a unit vector field $\nn$, that represents the direction of the molecules. A peculiarity is, that in practice we do not distinguish between $\nn$ and $-\nn$, so that $\nn$ should rather take values in a projective space $\mathbb{R}P^2$ to avoid problems with orientability. 
In order to represent local averages of the directions $\nn$ of the molecules, one gets an additional degree of freedom. Models describing the liquid crystal with such a variable include e.g.\ the Ericksen model.
The Landau-de Gennes model goes one step further by using the idea to describe the arrangement of a liquid crystal by a probability distribution $\rho$ on the sphere of directions, taking into account that opposite points have the same probability. Then the first moment vanishes and the (shifted) second moment $Q$ is a symmetric traceless tensor, which is used to model  $\rho$. This allows to incorporate both the Oseen-Frank and Ericksen model into the one of Landau and de Gennes.
A more detailed introduction to the various models and even for more refined generalizations of the Landau-de Gennes model, e.g.\ the Onsager model or Maier-Saupe model, can be found in \cite{Ball2017,Vollmer2015}. For the challenges and a comparison of the mentioned descriptions, see \cite{Ball2014,Ball2010,Bedford2014,Brezis1986,Riviere1995}.
In general, it is difficult to give precise descriptions of minimizers of the energy functionals associated with one of the models explicitly, except in some very special cases such as in \cite{Yu2020} or for the radial hedgehog solution in \cite{Majumdar2010}.

Mathematically speaking, liquid crystal theory shares several techniques and results with other subjects, for example the Ginzburg-Landau model in micromagnetics, \cite{Bethuel1994,Goldman2017,Jerrard1999}. Also parts of the description, such as function spaces \cite{Badal2016} and liftings \cite{Ignat2017,Majumdar2009}, $Q-$tensors \cite{Braides2013,Mottram2014}, the formation of topological singularities \cite{Tang2017} or similar energy functionals \cite{Contreras2018,Sandier1998} are of interest in a more abstract setting.

One interesting pattern one can observe in liquid crystals is the so called "saturn ring" effect. Under certain circumstances the defect structure forming in order to balance a topological charge on the surface of an immersed object in liquid crystals, takes the form of a ring around the particle, see \cite{Alama2020,Alama2015,Gu2000,Musevic2017}. Also more exotic structures such as knots are possible, we refer to \cite{Musevic2017} for an overview.
In addition, an electromagnetic field can be used to manipulate the occurrence of a saturn ring. While this is known in physics for several years \cite{Amoddeo2011,Fukuda2004,Fukuda2006,Fukuda2004a,Wang2014}, there are only few mathematical results \cite{Alama2017}. 
Starting from the Landau-de Gennes model, we have to find a minimizer of the energy
$$ \Eex(Q) = \int_\Omega \frac{1}{2}|\nabla Q|^2 + \frac{1}{\xi^2} f(Q) + \frac{1}{\eta^2} g(Q) \dx x $$
under suitable anchoring boundary conditions.
Here $\Omega$ is the region filled with the liquid crystal, in our case we consider $\Omega=\mathbb{R}^3\setminus B_1(0)$. The Dirichlet term models elastic forces, while $f$ incorporates bulk forces. The parameter $\xi$ describes the ratio between elastic and bulk forces. We are going to consider the limit of a vanishing elastic constant, i.e.\ $\xi$ will converge to zero. The effect of an external electromagnetic field is described by the function $g$, with the parameter $\eta$ coupling the field to elastic and bulk forces. We are also going to take the limit $\eta\rightarrow 0$. To complete our model, we impose a strong anchoring boundary condition on $\partial\Omega$ that corresponds to a radial director field $\nn=\ee_r$. With $\xi$ and $\eta$ converging to zero, we can consider different regimes regarding the relative speed of convergence of both parameters.
\begin{enumerate}
\item The case of strong fields $\eta|\ln(\xi)|\ll 1$, where we expect to observe a saturn ring was treated in \cite{Alama2017}.
\item The case $\eta|\ln(\xi)|\sim 1$, where the transition between dipole and saturn ring takes place is precisely the purpose of this paper.
\end{enumerate}
Our work is organized as follows. In the first section we define the different parts of the energy carefully, establish fundamental properties and discuss their effects in the minimizing process.

The second section contains the rescaling and states our main theorem, a $\Gamma-$convergence result in a sense that will be precised later. We will prove, that in the limit $\eta,\xi\rightarrow 0$ in our regime and under the assumption of rotational equivariance, the model reduces to a simple energy stated on the surface of the sphere $\mathbb{S}^2=\partial\Omega$. More precisely
$$ \E_0(F) = \sqrt[4]{24}s_*\int_F (1-\cos(\theta))\dx\omega +\sqrt[4]{24}s_*\int_{F^c} (1+\cos(\theta))\dx\omega + \frac{\pi}{2}s_*^2\beta |D\chi_F|(\mathbb{S}^2)\, , $$
where $s_*>0$ is a parameter depending on $f$ and $F\subset\mathbb{S}^2$ is a set of finite perimeter that can be seen as the projection of the region, in which a lifting of $Q$ from $\mathbb{R}P^2$ to $\mathbb{S}^2$ exists and the orientation at infinity agrees with the outward normal of $\partial B_1$. In the same spirit, $F^c$ stands for the region, where the lifting has the opposite orientation. In the above expression $\theta$ stands for the angle between a point $\omega$ on the sphere and $\ee_3$. We see the latter perimeter term as representation of a defect line. It tells us that switching from one orientation to the other comes with a cost, depending on the balance between the forces (modelled by $\beta$), the length of $\nn$ which is related to $s_*$ and the length of the defect line. This is the result we are going to prove in the next two sections. 

Section three is divided into three parts: We first show that the energy bound implies the existence of only a finite number of singularities if we are at some distance from the $\ee_3-$axis. The main idea will be to replace our functions $\Qex$ by the minimizers of  approximate problems and then use the higher regularity to derive a lower bound on the energy cost of a singularity. The energy bound then implies that in fact only finitely many singularities can occur. Next, we provide asymptotically exact lower bounds for the energy near those singularities. Then, the radial auxiliary problem is introduced. Given a ray from the surface $\partial\Omega$ to infinity such that $\Qex$ is close to being uniaxial, we can explicitly calculate the energy necessary to turn along the ray from our boundary conditions to the preferred configuration parallel to the external field in $\pm\ee_3-$direction. Combining the results, we are able to prove the lower bound part of the main theorem. 

The construction for the recovery sequence is made in section four. We use our knowledge about the interplay of the three parts of the energy to define approximate regions for the singularities and the uniaxial part. Here we profit from the exact formula of the optimal profile from the radial auxiliary problem. 

The remaining section deals with the limit energy. We calculate the minimizers (depending on $\beta$) and compare their energy with that of a dipole and a saturn ring at the same $\beta-$value. We find that by varying $\beta$ a hysteresis phenomenon occurs. Our findings rigorously explain physical experiments and known numerical simulations \cite{Lavrentovich2001,Stark1999}.

\section{Definitions and preliminaries}

We start this section by giving precise definitions for the functions and quantities mentioned in the introduction, namely the bulk and magnetic terms that involve the functions $f$ and $g$.

\begin{definition} We denote by $\Sym$ the space of symmetric matrices with vanishing trace
$$ \Sym\defi \{ Q\in\mathbb{R}^{3\times 3}\sd Q^\top = Q\:,\: \tr(Q)=0 \}\, , $$
equipped with the norm $|Q|=\sqrt{\tr(Q^2)}$.
Furthermore, for $a,b,c >0$ we define
\begin{equation} \label{def:f}
f(Q) = C - \frac{a}{2}\tr(Q^2) - \frac{b}{3} \tr(Q^3) + \frac{c}{4}(\tr(Q^2))^2
\end{equation} and 
\begin{equation} \label{def:g}
g(Q) = \begin{cases} \sqrt{\frac{2}{3}} - \frac{Q_{33}}{|Q|} & Q\in\Sym\setminus\{0\} \\
0 & Q=0 \end{cases} \, .
\end{equation} 
\end{definition}

As we stated in the introduction, the definition of $\Sym$ is motivated by the second order moment of a probability distribution $\rho$ on a sphere. The symmetry between $\pm\nn$ reads $\rho(\nn)=\rho(-\nn)$ for all $\nn\in\mathbb{S}^2$, i.e.\ the expectation value of $\nn$ vanishes, $\int_{\mathbb{S}^2} \nn \dx\rho = 0$. The second moment $\int_{\mathbb{S}^2} \nn\otimes\nn \dx\rho$ is symmetric and has trace $1$. From this we subtract the second moment of a uniform distribution on $\mathbb{S}^2$, i.e.\ $\overline{\rho} = \frac{1}{4\pi}$ to get the symmetric and traceless tensor $Q$.

The specific form of the function $f$ comes from the requirement of being invariant under rotations. Indeed, assuming a polynomial function $f$ and demanding frame indifference for the bulk energy (and of course for the elastic energy) we find that $f$ has to satisfy $f(Q)=f(R^\top QR)$ for all $R\in O(3)$. This implies that $f$ is the linear combination of $\tr(Q^2)$, $\tr(Q^3)$, $ (\tr(Q)^2)^2$, $ \tr(Q^2)\tr(Q^3)$, $ \tr(Q^2)^2$, $ \tr(Q^3)^2$, etc (see \cite[Lemma 3]{Ball2017}). It is convenient to consider only the first three terms although one could in principle add more. Another possible generalization is to consider $f$ to be dependent on the temperature. In the simplest case one writes $a(T-T_{*})$ instead of $a$ for a reference temperature $T_{*}$ \cite{Mottram2014}. However, we are not going to include this into our work. We see in the next Proposition that for a certain choice of the constant $C$, $f$ vanishes on so-called uniaxial $Q-$tensors (the set $\N$ in Proposition \ref{prop:prop_f} below). This is the main property of $f$ one should keep in mind during our analysis.

The definition of $g$ is inspired by the classical approach of introducing a quadratic term penalizing $\nn$ not being parallel to the external electric or magnetic field $\mathbf{H}$, e.g. $(\nn\cdot\mathbf{H})^2$. We choose a field in $\ee_3-$direction and separate the field strength from the direction, i.e.\ we write $\mathbf{H}=h\ee_3$. In addition, $g$ should be independent of $|Q|$, since we only want $\nn$ to be parallel to $\ee_3$ without preferring any size $|Q|$. The constant $\sqrt{2/3}$ is chosen such that $g$ is non-negative as we will see in Proposition \ref{prop:prop_g}.

\begin{proposition}[Properties of $f$] \label{prop:prop_f}
There exists a constant $C$ such that $f$ given by (\ref{def:f}) satisfies
\begin{enumerate}
\item $f(Q)\geq 0$ for all $Q\in\Sym$ and $\min_{Q\in\Sym} f(Q)=0$. Let $\N\defi f^{-1}(0)$. We have
$$ \N = \left\{ s_*\left(\nn\otimes\nn - \frac{1}{3}\id\right)\sd \nn\in\mathbb{S}^2 \right\}\, , $$
where $\mathbb{S}^2\subset\mathbb{R}^3$ is the unit sphere and $s_* = \frac{1}{4c}\left(b + \sqrt{b^2+24ac}\right)$. Moreover $\N$ is a smooth, compact, connected manifold without boundary diffeomorphic to $\mathbb{R}P^2$.
\item For all $Q\in \Sym$ with $|Q|>\sqrt{\frac{2}{3}}s_*$, there holds:
$$ f(Q) > f\left(\sqrt{\frac{2}{3}}s_*\frac{Q}{|Q|}\right)\, . $$
\item Furthermore, there exist constants $\delta_0,\gamma_1>0$ such that if $\dist(Q,\N)\leq \delta_0$ for $Q\in \Sym$, then 
$$ f(Q) \geq \gamma_1\: \dist(Q,\N)^2\, .   $$
\item There exist constants $C,C_f>0$ such that for $\dist(Q,\N)>C_f$ it holds
$$ f(Q) \geq C\: |Q|^4 \, . $$
\end{enumerate}
\end{proposition}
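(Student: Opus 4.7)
My plan is to treat the four parts in order, exploiting that $f$ depends only on the two polynomial invariants $\tr(Q^2)$ and $\tr(Q^3)$ of $Q\in\Sym$, combined with the sharp inequality
\[
(\tr(Q^3))^2 \le \tfrac{1}{6}(\tr(Q^2))^3 \qquad \text{for all } Q\in\Sym,
\]
with equality exactly on the uniaxial locus. This inequality is nothing but the statement that the characteristic polynomial $\lambda^3 + e_2\lambda - e_3$ (with $e_2 = -\tfrac12\tr(Q^2)$, $e_3 = \tfrac13\tr(Q^3)$) has three real roots, i.e.\ that its discriminant $-4e_2^3 - 27 e_3^2$ is non-negative; equality forces a double root, which is uniaxiality. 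For (1), since $b>0$, at fixed $|Q|^2$ the function $f$ is minimized by making $\tr(Q^3)$ as large as possible, i.e.\ on positively oriented uniaxial tensors $Q = s(\nn\otimes\nn - \tfrac13\id)$ with $s\ge 0$. Plugging into $f$ collapses the problem to the one-variable polynomial
\[
\phi(s) = C - \tfrac{a s^2}{3} - \tfrac{2 b s^3}{27} + \tfrac{c s^4}{9},
\]
whose unique positive critical point solves $2cs^2 - bs - 3a = 0$, giving exactly the claimed $s_*$. Choosing $C$ so that $\phi(s_*) = 0$ yields the identification of $\N$, and the diffeomorphism $\N\cong\mathbb{R}P^2$ follows because the smooth map $\nn\mapsto s_*(\nn\otimes\nn - \tfrac13\id)$ from $\mathbb{S}^2$ onto $\N$ identifies precisely antipodal pairs (one recovers $\pm\nn$ from $Q\in\N$ as a unit eigenvector for the simple eigenvalue $2s_*/3$), so it factors through a bijective smooth map from $\mathbb{R}P^2$, which by compactness is a diffeomorphism.

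For (2), I would restrict $f$ to the ray from the origin through $Q$: setting $\hat Q := Q/|Q|$, $\alpha := \tr(\hat Q^3) \in [-1/\sqrt 6, 1/\sqrt 6]$ and $h(t) := f(t\hat Q)$, a direct computation gives $h'(t) = t(ct^2 - b\alpha t - a)$, whose unique positive root is $t_+(\alpha) = (b\alpha + \sqrt{b^2\alpha^2 + 4ac})/(2c)$, so that $h$ is strictly decreasing on $(0, t_+)$ and strictly increasing on $(t_+, \infty)$. A short monotonicity check shows that $t_+(\alpha)$ is increasing in $\alpha$ and takes its maximum $\sqrt{2/3}\,s_*$ at $\alpha = 1/\sqrt 6$ (the positively oriented uniaxial case), so $h$ is strictly increasing on $[\sqrt{2/3}\,s_*, \infty)$ for every admissible $\hat Q$, giving the strict inequality. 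For (3), I would use $O(3)$-equivariance (which acts transitively on $\N$ and preserves $f$) to reduce to a single base point, say $Q_0 = s_*(\ee_3\otimes\ee_3 - \tfrac13\id)$. I would then compute the Hessian $D^2 f(Q_0)[P,P]$ directly, decomposing $P\in\Sym$ into its tangent directions to $\N$ (variations of $\nn$, corresponding to the entries $P_{13}, P_{23}$) and its three remaining normal directions $P_{11}, P_{22}, P_{12}$. The tangent part yields zero, as it must at a minimum along $\N$; the critical-point identity $2cs_*^2 - bs_* = 3a$ makes the quadratic form on the normal part positive definite. Compactness of $\N$ and a Taylor expansion of the smooth (polynomial) $f$ then yield the quadratic lower bound on a uniform tubular neighborhood $\{\dist(Q,\N) \le \delta_0\}$.

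For (4), the sharp inequality gives the global estimate
\[
f(Q) \ge C - \tfrac{a}{2}|Q|^2 - \tfrac{b}{3\sqrt 6}|Q|^3 + \tfrac{c}{4}|Q|^4,
\]
and the quartic dominates outside a fixed ball $\{|Q|\le M\}$, yielding $f(Q) \ge \tfrac{c}{8}|Q|^4$ there. On the complementary compact region $\{|Q| \le M\}\cap\{\dist(Q,\N) \ge C_f\}$, continuity and positivity of $f$ away from $\N$ yield a uniform lower bound $f(Q) \ge \delta > 0$, hence $f(Q) \ge (\delta/M^4)|Q|^4$, and combining both bounds proves (4). The main obstacle in the whole proposition is the Hessian computation in (3): once one commits to the matrix decomposition around $Q_0$ and uses $2cs_*^2 - bs_* = 3a$, positive definiteness on the normal directions falls out, but the bookkeeping of tangent versus normal components in $\Sym$, together with the coefficients cleaning up correctly, is where care is needed.
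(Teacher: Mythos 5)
Your proposal is correct; note, however, that the paper does not give an argument here but merely cites \cite[Proposition 15]{Majumdar2009}, \cite[Lemma 3.5]{Canevari2013}, \cite[Lemma 2.2.4 $(F_2)$]{Canevari2015} and \cite[Lemma 2.4]{Canevari2015a} for the four parts, so there is no in-text proof to compare against. Your sketch (discriminant inequality $(\tr Q^3)^2\le \tfrac16(\tr Q^2)^3$ reducing part 1 to the scalar polynomial $\phi(s)$ with critical point $s_*$; monotonicity of $h(t)=f(t\hat Q)$ past $t_+(\alpha)\le\sqrt{2/3}\,s_*$ for part 2; $O(3)$-reduction plus Hessian positive-definiteness on the normal bundle via $2cs_*^2-bs_*=3a$ for part 3; quartic dominance at infinity plus compactness for part 4) is the standard route followed in those references and is sound; the only place requiring genuine care, the Hessian bookkeeping in part 3, is correctly flagged by you and does indeed go through with the critical-point identity.
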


\begin{proof}
A proof of the first statement can be found in \cite[Proposition 15]{Majumdar2009}.
For the second result, we refer to \cite[Lemma 3.5]{Canevari2013}.
The third assertion is proved in \cite[Lemma 2.2.4 ($F_2$)]{Canevari2015}.
The last claim follows as in \cite[Lemma 2.4]{Canevari2015a}.
\end{proof}

The last three statements are of technical nature. The second property is only used to establish $L^\infty-$bounds in Remark \ref{rem:Linfty_Qex} and Proposition \ref{prop:bounds_Q_eps}. The estimate in 3. simply states that one can think of $f$ as being quadratic close to its minimum which is attained on $\N$, while 4. tells us that $f$ is of order $4$ far from $\N$.
The first statement gives an interesting connection between $f$ and the space $\Sym$. In fact, $\N$ plays an important role in our analysis as it will allow us to identify $Q$ and $\pm \nn$ and thus give a intuitive meaning to $Q$. This is formalized in the next Proposition.

\begin{proposition}[Structure of $\Sym$] \label{prop:prop_sym0}
\begin{enumerate}
\item For all $Q\in \Sym$ there exist $s\in [0,\infty)$ and $r\in [0,1]$ such that
\begin{equation} \label{prop:prop_sym0_eq}
Q = s\left( \left(\nn\otimes\nn - \frac{1}{3}\id\right) + r\left(\mm\otimes\mm - \frac{1}{3}\id\right)\right)\, ,
\end{equation}
where $\nn,\mm$ are normalized, orthogonal eigenvectors of $Q$. The values $s$ and $r$ are continuous functions of $Q$.
\item Let $\C = \{ Q\in\Sym\sd \lambda_1(Q)=\lambda_2(Q) \}$, where we denoted by $\lambda_1,\lambda_2$ the two leading eigenvalues of $Q$. Then 
$$ \C =\{ Q\in\Sym\setminus\{0\}\sd r(Q)=1 \}\cup\{0\} \quad\text{and}\quad \C\setminus\{0\} \cong \mathbb{R}P^2\times\mathbb{R}\, . $$
\item There exists a continuous function $\R:\Sym\setminus\C\rightarrow\N$ such that $\R(Q)=Q$ for all $Q\in\N$. In particular, $\Sym\setminus\C\simeq \N$. The map $\R$ can be chosen to be the nearest point projection onto $\N$. In this case, for all $Q\in\Sym\setminus\C$ decomposed as in (\ref{prop:prop_sym0_eq}), $\R$ is given by $\R(Q)=s_*(\nn\otimes\nn-\frac{1}{3}\id)$ .
\end{enumerate}
\end{proposition}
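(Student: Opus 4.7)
For (1), the plan is to start from the spectral decomposition of $Q$. Since $Q\in\Sym$ is symmetric and traceless, it admits an orthonormal eigenbasis $\nn,\mm,\mathbf{p}$ with eigenvalues $\mu_1\geq\mu_2\geq\mu_3$ satisfying $\mu_1+\mu_2+\mu_3=0$. Using the resolution of the identity $\nn\otimes\nn+\mm\otimes\mm+\mathbf{p}\otimes\mathbf{p}=\id$ to eliminate $\mathbf{p}\otimes\mathbf{p}$, and $\mu_3=-\mu_1-\mu_2$, I would rewrite
$$ Q \: = \: (\mu_1-\mu_3)\!\left(\nn\otimes\nn-\tfrac{1}{3}\id\right) + (\mu_2-\mu_3)\!\left(\mm\otimes\mm-\tfrac{1}{3}\id\right). $$
Setting $s\defi\mu_1-\mu_3$ and, for $Q\neq 0$, $r\defi(\mu_2-\mu_3)/(\mu_1-\mu_3)$, the ordering $\mu_3\leq\mu_2\leq\mu_1$ gives $s\geq 0$ and $r\in[0,1]$. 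Since the eigenvalues of a symmetric matrix depend continuously on it, $s$ is continuous on $\Sym$ and $r$ on $\Sym\setminus\{0\}$ (where $s>0$).

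For (2), the equality $\lambda_1=\lambda_2$ means $\mu_1=\mu_2$, which by the formula just derived is equivalent to $r=1$. In that case
$$ Q \: = \: s\left(\nn\otimes\nn+\mm\otimes\mm-\tfrac{2}{3}\id\right) \: = \: -s\left(\mathbf{p}\otimes\mathbf{p}-\tfrac{1}{3}\id\right), $$
so $Q\in\C\setminus\{0\}$ is determined by the line $\mathbb{R}\mathbf{p}\in\mathbb{R}P^2$ associated with the simple (and smallest) eigenvalue $-2s/3$ together with $s>0$. I would then exhibit the diffeomorphism $\C\setminus\{0\}\to\mathbb{R}P^2\times\mathbb{R}$ via $Q\mapsto(\mathbb{R}\mathbf{p}(Q),\log s(Q))$; smoothness comes from the standard analytic dependence of simple eigenprojectors and simple eigenvalues on $Q$.

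For (3), the plan is to identify the nearest point projection explicitly. For $\mathbf{v}\in\mathbb{S}^2$, using $\tr Q=0$ together with $|\mathbf{v}\otimes\mathbf{v}-\tfrac{1}{3}\id|^2=\tfrac{2}{3}$,
$$ \left|Q-s_*\!\left(\mathbf{v}\otimes\mathbf{v}-\tfrac{1}{3}\id\right)\right|^2 \: = \: |Q|^2+\tfrac{2}{3}s_*^2-2s_*\,\mathbf{v}^{\top}Q\mathbf{v}, $$
so the minimization over $\mathbf{v}$ amounts to maximizing the Rayleigh quotient $\mathbf{v}^{\top}Q\mathbf{v}$. The maximum equals $\mu_1$ and, precisely when $\mu_1>\mu_2$, i.e.\ on $\Sym\setminus\C$, is attained only at $\mathbf{v}=\pm\nn$; both choices give the same projector $\R(Q)=s_*(\nn\otimes\nn-\tfrac{1}{3}\id)$, which reduces to $Q$ on $\N$. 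Continuity on $\Sym\setminus\C$ is the continuity of the spectral projector onto an isolated simple eigenspace. For the deformation retract $\Sym\setminus\C\simeq\N$, I would use the straight-line homotopy $(Q,t)\mapsto(1-t)Q+t\R(Q)$, which has the same eigenvectors as $Q$ with top two eigenvalues $(1-t)\mu_1+\tfrac{2ts_*}{3}$ and $(1-t)\mu_2-\tfrac{ts_*}{3}$; the gap $(1-t)(\mu_1-\mu_2)+ts_*$ is strictly positive for all $t\in[0,1]$, so the homotopy stays in $\Sym\setminus\C$.

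The only delicate point is the continuity of the top-eigenvector projection, but this is exactly guaranteed by the gap condition $\mu_1>\mu_2$ that defines $\Sym\setminus\C$, via the standard perturbation theory for isolated simple eigenvalues.
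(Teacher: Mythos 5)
Your proof is correct and fills in the content of the lemmas from Canevari's thesis that the paper cites without argument; in particular your $s=\mu_1-\mu_3$ and $r=(\mu_2-\mu_3)/(\mu_1-\mu_3)$ agree, via the trace-zero relation $\mu_3=-\mu_1-\mu_2$, with the paper's $s=2\lambda_1+\lambda_2$, $r=(\lambda_1+2\lambda_2)/s$, and your nearest-point and straight-line-homotopy arguments for (3) are exactly what the cited lemmas establish. Your remark that $r$ is only continuous on $\Sym\setminus\{0\}$ is a correct precision that the paper's phrasing glosses over.
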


\begin{proof}
The first part follows from \cite[Lemma 1.3.1]{Canevari2015} for $s = 2\lambda_1+\lambda_2$ and $r=(\lambda_1+2\lambda_2)/s$, where $\lambda_1\geq\lambda_2$ are the two leading eigenvalues of $Q$.
The second part is a consequence of the definition of $s,r$ in terms of the eigenvalues and \cite[Lemma 1.3.5]{Canevari2015}.
The last part is a reformulation of Lemma 1.3.6 and Lemma 1.3.7 in \cite{Canevari2015}, together with Lemma 2.2.2.
\end{proof}

The decomposition (\ref{prop:prop_sym0_eq}) provides us with a very useful tool to perform calculations, for example in Lemma \ref{lem:continuity_g} or Proposition \ref{prop:phi_close_to_N}.   In the second statement we introduce $\C$, a set that can be thought of as cone over $\mathbb{R}P^2$. It contains exactly the biaxial $Q-$tensors. If a $Q-$tensor is not biaxial, there exists a retraction onto $\N$ which coincides with the nearest point projection and is given by the element of $\N$ corresponding to the dominating eigenvector of $Q$.

We finish this section with two Propositions about $g$. More precisely, we prove that $g$ has the properties that we claimed above and show that $g$ takes an even simpler form on $\N$. Finally we show that $g$ is Lipschitz continuous in a neighbourhood of $\N$. All calculations are straightforward.

\begin{proposition}[Properties of $g$] \label{prop:prop_g}
Let $g$ be given as in (\ref{def:g}). 
\begin{enumerate}
\item $g(Q)\geq 0$ for all $Q\in\Sym$ with equality of and only if $Q=t(\ee_3\otimes\ee_3 - \frac{1}{3}\id)$ for some $t\geq 0$.
\item If $Q\in\N$ is given by $Q=s_*(\nn\otimes\nn-  \frac{1}{3}\id)$ with $\nn\in\mathbb{S}^2$, then
$$ g(Q) = \sqrt{\frac{3}{2}}\left( 1 - \nn_3^2\right)\, . $$
\end{enumerate}
\end{proposition}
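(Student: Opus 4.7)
The plan is to reduce both items to one elementary computation: the Frobenius squared norm of the matrix $A \defi \ee_3\otimes\ee_3 - \frac{1}{3}\id \in \Sym$. A direct expansion gives $|A|^2 = \tr(A^2) = 1 - \tfrac{2}{3} + \tfrac{1}{3} = \tfrac{2}{3}$, hence $|A| = \sqrt{2/3}$. With this number in hand everything else follows mechanically.

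For item 1, the key observation is that, because $Q\in\Sym$ has vanishing trace, one may rewrite $Q_{33} = \tr(Q\,\ee_3\otimes\ee_3) = \tr(Q(\ee_3\otimes\ee_3 - \tfrac{1}{3}\id)) = \tr(QA)$. Cauchy–Schwarz with respect to the Frobenius inner product then yields $|Q_{33}| \le |Q|\,|A| = \sqrt{2/3}\,|Q|$ for any $Q\ne 0$, which is exactly the inequality $Q_{33}/|Q| \le \sqrt{2/3}$ needed to conclude $g(Q)\ge 0$; the value $g(0)=0$ is given by definition. The equality case in Cauchy–Schwarz forces $Q$ to be a real scalar multiple of $A$, and to obtain $Q_{33}/|Q| = +\sqrt{2/3}$ (and not $-\sqrt{2/3}$) the scalar must be non-negative. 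This isolates precisely $Q = t(\ee_3\otimes\ee_3 - \frac{1}{3}\id)$ with $t\ge 0$, including $t=0$ which corresponds to $Q=0$.

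For item 2, one substitutes $Q = s_*(\nn\otimes\nn - \frac{1}{3}\id)$ into the definition of $g$. The same computation that produced $|A|^2 = 2/3$ applies verbatim with $\nn$ in place of $\ee_3$, using $|\nn|=1$, and yields $|\nn\otimes\nn - \frac{1}{3}\id|^2 = 2/3$, hence $|Q| = s_*\sqrt{2/3}$. Combined with $Q_{33} = s_*(\nn_3^2 - \tfrac{1}{3})$, a one-line simplification produces
\[
g(Q) \;=\; \sqrt{\tfrac{2}{3}} - \frac{s_*(\nn_3^2 - 1/3)}{s_*\sqrt{2/3}} \;=\; \frac{2/3 - (\nn_3^2 - 1/3)}{\sqrt{2/3}} \;=\; \sqrt{\tfrac{3}{2}}\bigl(1 - \nn_3^2\bigr).
\]

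I do not foresee any real obstacle: both parts are essentially a single application of Cauchy–Schwarz and a direct substitution once the value $|A|^2 = 2/3$ is recorded. The only points requiring care are treating $Q = 0$ separately from the Cauchy–Schwarz estimate (which is vacuous there) and selecting the correct sign $t \ge 0$ in the equality case of item 1.
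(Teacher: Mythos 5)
Your proof is correct, and for item 1 it takes a genuinely different route from the paper. The paper minimizes $g$ over $\Sym$ by writing down the Lagrange conditions for the tracelessness constraint, solves for the critical points (first ruling out $Q_{33}=0$, then deducing $Q_{ij}=0$ off-diagonal and $Q_{11}=Q_{22}$ from the multiplier equations, then $Q_{33}=\pm\sqrt{2/3}|Q|$), and identifies the minimizing family. Your approach instead observes that tracelessness gives $Q_{33}=\tr(QA)=Q:A$ with $A=\ee_3\otimes\ee_3-\tfrac13\id$, so Cauchy--Schwarz in the Frobenius inner product immediately yields $Q_{33}\le |Q|\,|A|=\sqrt{2/3}\,|Q|$ and hence $g\ge 0$, with the equality case of Cauchy--Schwarz delivering $Q=tA$, $t\ge 0$, on the nose. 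This is cleaner: it is a direct global inequality rather than a stationarity computation, it handles the equality case without any further bookkeeping, and it sidesteps the (implicit, unstated) compactness argument one needs to promote the Lagrange critical point to a global minimum of the scale-invariant function $g$. For item 2 both you and the paper perform the same direct substitution using $|\nn\otimes\nn-\tfrac13\id|=\sqrt{2/3}$, so there is no difference there.
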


\begin{proof}
Minimizing $g$ under the tracelessness constraint, we get the necessary conditions
$$ -\frac{1}{|Q|}+\frac{Q_{33}^2}{|Q|^3}-\lambda = 0\, , \quad \frac{Q_{33}Q_{jj}}{|Q|^3}-\lambda = 0 \text{ for } j=1,2\, , \quad \frac{Q_{33}Q_{ij}}{|Q|^3} = 0 \text{ for }i\neq j  $$
for a Lagrange multiplier $\lambda$. For $Q=0$ the claim is clear by definition. So let $Q\in\Sym\setminus\{0\}$. If $Q_{33}=0$ we get a contradiction. Hence we can assume $Q_{33}\neq 0$. Then the third equation from above implies $Q_{ij}=0$ for $i\neq j$ and the second $Q_{11}=Q_{22}$. By $\tr(Q)=0$, we have $Q_{33}=-2Q_{11}$. Then the first equation reads $0=\frac{3}{2}Q_{33}^2 - |Q|^2$, i.e.\ $Q_{33}=\sqrt{2/3}|Q|$. Inserting this into $g$ we get $\min_{\Sym} g = 0$. Our conditions also imply the claimed representation $Q=t(\ee_3\otimes\ee_3 - \frac{1}{3}\id)$. Reversely, it is obvious that $g=0$ for such $Q$.

For the second claim, it is straightforward to check that for $Q=s_*(\nn\otimes\nn-  \frac{1}{3}\id)\in\N$ we have $|Q|^2=\frac{2}{3}s_*^2$. Thus
\begin{align*}
g(Q) &= \sqrt{\frac{2}{3}} - \frac{s_*(\nn_3^2-\frac{1}{3})}{\sqrt{\frac{2}{3}}s_*} = \sqrt{\frac{2}{3}} + \frac{1}{3}\sqrt{\frac{3}{2}} - \sqrt{\frac{3}{2}}\nn_3^2 = \sqrt{\frac{3}{2}}\left( 1 - \nn_3^2\right)\, .
\end{align*}
\end{proof}

\begin{lemma} \label{lem:continuity_g}
There exist constants $\delta_1,C>0$ such that if $Q\in\Sym$ with $\dist(Q,\N)\leq \delta$ for $0<\delta<\delta_1$, then
\begin{align}
|g(Q) - g(\R\circ Q)| \leq C\: \dist(Q,\N)\, .
\end{align}
\end{lemma}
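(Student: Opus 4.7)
The plan is to reduce the statement to a routine application of Lipschitz continuity of $g$ on a suitable compact neighbourhood of $\N$.

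First I would observe two geometric facts about $\N$ in $\Sym$. On one hand, every $Q \in \N$ satisfies $|Q|^2 = \tfrac{2}{3}s_*^2 > 0$, so $\N$ is bounded away from the origin. On the other hand, using Proposition \ref{prop:prop_sym0}, every $Q \in \N$ corresponds to the parameters $s = s_*$ and $r = 0$, whereas $\C \setminus \{0\}$ consists of those $Q$ with $r = 1$; by continuity of $s$ and $r$, this forces $\dist(\N, \C \cup \{0\}) > 0$. Therefore there exists $\delta_1 > 0$ (which I may shrink further below) such that the closed tubular neighbourhood
\[
U_{\delta_1} \defi \{ Q \in \Sym \sd \dist(Q,\N) \leq \delta_1 \}
\]
is compact, disjoint from $\{0\}$, and disjoint from $\C$.

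Next, on $U_{\delta_1}$ the function $g$ defined in (\ref{def:g}) is smooth since $Q \neq 0$ on that set: indeed, $Q \mapsto Q_{33}/|Q|$ is $C^\infty$ away from the origin. Compactness of $U_{\delta_1}$ then yields a finite constant
\[
C \defi \sup_{Q \in U_{\delta_1}} |\nabla g(Q)| < \infty,
\]
so $g$ is $C$-Lipschitz on $U_{\delta_1}$. Since $U_{\delta_1}$ also avoids $\C$, by Proposition \ref{prop:prop_sym0} the nearest point projection $\R$ is well defined and continuous on $U_{\delta_1}$, with $\R(Q) \in \N$ and $|Q - \R(Q)| = \dist(Q,\N)$.

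Putting this together, for any $Q \in \Sym$ with $\dist(Q,\N) \leq \delta < \delta_1$, both $Q$ and $\R(Q)$ belong to $U_{\delta_1}$, and the Lipschitz bound yields
\[
|g(Q) - g(\R(Q))| \leq C \, |Q - \R(Q)| = C \, \dist(Q,\N),
\]
which is exactly the claim. The only mildly delicate point is the choice of $\delta_1$: one must simultaneously stay away from $0$ (where $g$ is only lower-semicontinuous as defined) and from the biaxial cone $\C$ (where $\R$ fails to be well defined), but both conditions follow from the fact that $\N$ is compact and disjoint from $\C \cup \{0\}$, so no finer argument is needed.
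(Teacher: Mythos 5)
Your proof is correct but takes a genuinely different, softer route than the paper's. The paper argues by direct computation: using the decomposition $Q = s\bigl((\nn\otimes\nn - \tfrac{1}{3}\id) + r(\mm\otimes\mm - \tfrac{1}{3}\id)\bigr)$ from Proposition~\ref{prop:prop_sym0}, it writes out $g(Q) - g(\R\circ Q)$ explicitly, finds after cancellation a bound of the form $\tfrac{3}{2}r + \mathcal{O}(r^2)$, and then controls $r$ by $\sqrt{3}\,\dist(Q,\N)/|s|$, finally choosing $\delta_1 = s_*/(2\sqrt{3})$ to keep $s$ bounded below. This produces an explicit constant tied to the Taylor expansion of $(1-r+r^2)^{-1/2}$. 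Your approach instead uses compactness of $\N$ to find a compact tubular neighbourhood $U_{\delta_1}$ of $\N$ disjoint from $\{0\}$ and from $\C$, bounds $|\nabla g|$ there, and concludes by a mean-value estimate; this is shorter and more conceptual, at the cost of an inexplicit constant. One point worth tightening: $U_{\delta_1}$ is a shell around $\N\cong\mathbb{R}P^2$ and is not convex, so a pointwise gradient bound on $U_{\delta_1}$ does not literally make $g$ Lipschitz between arbitrary pairs in $U_{\delta_1}$. It does suffice for the pair $(Q,\R(Q))$, because the segment $Q_t = tQ+(1-t)\R(Q)$ satisfies $\dist(Q_t,\N)\leq |Q_t-\R(Q)| = t\,\dist(Q,\N) < \delta_1$ and hence stays in $U_{\delta_1}$, where the gradient bound holds; a one-line remark to this effect would make the last step airtight. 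Likewise, $\dist(\N,\C)>0$ is most cleanly obtained from $\N$ compact, $\C$ closed and $\N\cap\C=\emptyset$, rather than from continuity of $r$. Neither issue is a substantive gap.
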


\begin{proof}
We use Proposition \ref{prop:prop_sym0} to write
$$ Q = s\left( \left( \nn\otimes \nn - \frac{1}{3}\id\right)  + r \left(\mm\otimes \mm - \frac{1}{3}\id \right)\right)\, , $$
with  $s>0$, $0 \leq r <1$ and $\nn,\mm$ orthonormal eigenvectors of $Q$. Thus $\R\circ Q = s_* \left( \nn\otimes \nn - \frac{1}{3}\id\right)$ and from Proposition \ref{prop:prop_g}, we infer that $g(\R\circ Q) = \sqrt{\frac{2}{3}}(1-n_3^2)$. In order to calculate $g(Q)$, we note that
\begin{align*}
|Q|^2 &= s^2 \left| \nn\otimes \nn - \frac{1}{3}\id \right|^2 + (sr)^2 \left| \mm\otimes \mm - \frac{1}{3}\id \right|^2 + 2 s^2r \left(\nn\otimes \nn - \frac{1}{3}\id\right):\left(\mm\otimes \mm - \frac{1}{3}\id\right) \\
&= \frac{2}{3} s^2 \left( r^2 - r +1 \right)\, .
\end{align*}
This implies
\begin{align*}
|g(Q) - g(\R\circ Q)| &= \left| \sqrt{\frac{2}{3}} - \frac{s(n_3^2 - \frac{1}{3}) + sr(m_3^2-\frac{1}{3})}{\sqrt{\frac{2}{3}} s \sqrt{1-r+r^2}} - \sqrt{\frac{2}{3}} + \frac{s_*(n_3^2-\frac{1}{3})}{s_* \sqrt{\frac{2}{3}}} \right| \\
&\leq \frac{n_3^2-\frac{1}{3}}{\sqrt{\frac{2}{3}}} \left(\frac{1}{\sqrt{1-r+r^2}} - 1\right) + \frac{m_3^2-\frac{1}{3}}{\sqrt{\frac{2}{3}}} \frac{r}{\sqrt{1-r+r^2}}\, .
\end{align*}
Note, that the Taylor expansion at $r=0$ is given by $\displaystyle{\frac{1}{\sqrt{1-r+r^2}} - 1} = \frac{r}{2} + \mathcal{O}(r^2)$ and $\displaystyle{\frac{r}{\sqrt{1-r+r^2}} = r + \mathcal{O}(r^2)}$. Hence
\begin{equation}\label{lem:continuity_g_eq}
|g(Q) - g(\R\circ Q)| \leq \frac{3}{2} r + \mathcal{O}(r^2)\, .
\end{equation}
Finally, we can estimate
\begin{align*}
\dist^2(Q,\N) &= |Q - \R(Q)|^2 = \left| (s-s_*)(\nn\otimes\nn - \frac{1}{3}\id) + sr (\mm\otimes\mm - \frac{1}{3}\id) \right|^2 \\
&= \frac{2}{3}|s-s_*|^2 + \frac{2}{3}|sr|^2 - \frac{2}{3}sr(s-s_*) \\
&\geq \frac{1}{3}|s-s_*|^2 + \frac{1}{3}|sr|^2\, .
\end{align*}
This implies $|s-s_*|\leq \sqrt{3}\:\dist(Q,\N)$ and $\displaystyle{|r|\leq \frac{\sqrt{3}\: \dist(Q,\N)}{|s|}}$. We define $\displaystyle{\delta_1 = \frac{1}{2\sqrt{3}}s_*}$ and together with (\ref{lem:continuity_g_eq}) we get
$$ |g(Q) - g(\R\circ Q)| \leq C r \leq \frac{\sqrt{3}\dist(Q,\N)}{|s|} \leq C\frac{2\sqrt{3}}{s_*} \dist(Q,\N)\, . $$
\end{proof}

\section{Scaling and statement of result}

Starting from the one constant approximation of the Landau-de Gennes energy in $\Omega_{r_0} = \mathbb{R}^3 \setminus \overline{B_{r_0}(0)}$ we find the energy 
\begin{align} \label{eq:scaling_full_E}
\E(Q) = \int_{\Omega_{r_0}} \frac{L}{2}|\nabla Q|^2 + f(Q) +  h^2 g(Q) \dx x
\end{align} for parameters $L,h,r_0>0$. As we have seen before, $h$ can be interpreted as a field strength, $r_0$ as the particle radius and $L$ as the elastic constant. In order to be able to work on a fixed domain, we apply the rescaling $\Omega\defi \frac{1}{r_0}\Omega_{r_0}$ and $\tilde{x} =  x/r_0$. We introduce the new function $\widetilde{Q}(\tilde{x}) = Q(r_0 \tilde{x})=Q(x)$ and $\widetilde{\nabla}=\nabla_{\tilde{x}}= \frac{1}{r_0}\nabla_x$. Then 
\begin{align*}
\E(Q) &= \int_\Omega \frac{L r_0^3}{2 r_0^2} |\nabla \widetilde{Q}|^2 + r_0^3 f(\widetilde{Q}) + h^2 r_0^3 g(\widetilde{Q}) \dx\tilde{x} \\
&= \int_\Omega \frac{L r_0}{2} |\widetilde{\nabla} \widetilde{Q}|^2 + r_0^3 f(\widetilde{Q}) + (h r_0^\frac{3}{2})^2\: g(\widetilde{Q}) \dx\tilde{x}\, .
\end{align*} Dividing by $L r_0$, we can define
\begin{equation} \label{def:Eex}
\begin{split}
\Eex(\widetilde{Q}) = \int_\Omega \frac{1}{2}|\widetilde{\nabla}\widetilde{Q}|^2 + \frac{1}{\xi^2} f(\widetilde{Q}) + \frac{1}{\eta^2} g(\widetilde{Q}) \dx\tilde{x}\, ,
\end{split}
\end{equation} where we introduced the new parameters $\xi = \sqrt{L}/r_0$ and $\eta = \sqrt{L}/(r_0 h)$. This is the energy that was announced in the introduction. The natural space for this energy to be well defined is $H^1(\Omega,\Sym)$. Minimizing the first term would lead to a harmonic map, the second term prefers to be uniaxial with a certain norm, while the third term takes its minimum when the director is aligned parallel to $\ee_3$. So the constant uniaxial map $s_*(\ee_3\otimes\ee_3 - \frac{1}{3}\id)$ would be a minimizer of our energy. However, this will violate the boundary conditions we are going to impose, namely we want $\Qex\in H^1(\Omega,\Sym)$ to satisfy
\begin{equation} \label{eq:bc}
\Qex = Q_b \quad \text{on }\:\:\mathbb{S}^2\, ,
\end{equation}
where $Q_b(x) = s_*\left(\xx\otimes\xx - \frac{1}{3}\id\right)$.
So what we expect instead is a map that is close to $s_*(\ee_3\otimes\ee_3 - \frac{1}{3}\id)$ everywhere, except for a transition zone near the boundary. In this boundary layer, which will turn out to be of thickness $\eta$, we will find tubes of cross sectional area $\xi^2$ containing the regions where $\Qex$ is biaxial.

Since the problem is equivariant with respect to rotations around the $\ee_3-$axis, it is natural to consider only rotationally equivariant maps. We say that a map $Q$ is \emph{rotationally equivariant} if $Q$ is equivariant with respect to rotations around the $\ee_3$-axis. In other words, using cylindrical coordinates, one has
\begin{align*}
Q(\rho,\varphi,z) = R_\varphi^\top Q(\rho,0,z) R_\varphi\, , \quad \text{ where }\quad R_\varphi = \begin{pmatrix}
\cos\varphi & -\sin\varphi & 0 \\
\sin\varphi & \cos\varphi & 0 \\
0 & 0 & 1
\end{pmatrix}\, .
\end{align*} 
For uniaxial maps $Q=s_*(\nn\otimes\nn-\frac{1}{3}\id)$ this is equivalent to the usual notion of equivariance for vectors $\nn(R_\varphi\xx) = R_\varphi^\top\nn(\xx)$.
We define the set of admissible functions $\A$ to be the set of rotationally equivariant functions  $\Qex\in H^1(\Omega,\Sym)$ satisfying the boundary condition (\ref{eq:bc}).
This motivates the definition for $Q\in H^1(\Omega,\mathbb{R}^{3\times 3})$
$$ \Eex^\A(Q) = \begin{cases} \Eex(Q) & \text{if }Q\in \A\, , \\ \infty & \text{otherwise.} \end{cases} $$
We strongly believe that minimizers of $\Eex$ are also rotationally equivariant, although this does not follow from our work and remains an open issue. We will remove the hypothesis of rotational equivariance in a work in preparation.

The following theorem is the main result of the paper.

\begin{theorem}\label{thm:main}
Suppose that
\begin{equation} \label{eq:beta}
\eta|\ln(\xi)|\rightarrow \beta\in (0,\infty) \quad \text{ as } \eta\rightarrow 0\, .
\end{equation}
Then $\eta\:\Eex^\A\rightarrow\E_0$ in a variational sense, where the limiting energy $\E_0$ for a set $F\subset \mathbb{S}^2$ is given by
\begin{equation} \label{def:lim_energy} 
\E_0(F) = \sqrt[4]{24}s_*\int_F (1 - \cos(\theta)) \dx\omega + \sqrt[4]{24}s_*\int_{F^c} (1 + \cos(\theta)) \dx\omega + \frac{\pi}{2}s_*^2\beta |D\chi_F|(\mathbb{S}^2)\, .
\end{equation}
 More precisely, we have the following statements: 
\begin{enumerate}
\item Compactness: For any sequence $\Qex\in \A$ such that $\eta\: \Eex(\Qex)\leq C$, there exists a measurable set of finite perimeter $F\subset \mathbb{S}^2$ that is invariant under rotations w.r.t. the $\ee_3-$axis, measurable functions $\nn^\eta:\Omega\rightarrow\mathbb{S}^2$ and a set $\omega_\eta\subset\Omega$ with $\lim_{\eta\rightarrow 0}|\omega_\eta|= 0$, $\Omega\setminus\omega_\eta$ simply connected, such that  for all $\sigma>0$ it holds $\nn^\eta\in C^0(\Omega\setminus(Z_\sigma\cup\omega_\eta),\mathbb{S}^2)$ and  
\begin{equation} \label{thm:main:cptness}
\lim_{\eta\rightarrow 0}\Big\Vert s_*\Big(\nn^\eta\otimes\nn^\eta-\frac{1}{3}\id\Big)-\Qex\Big\Vert_{L^2(\Omega\setminus Z_\sigma)}=0\, , \quad \chi_{F_\eta}\rightarrow \chi_F \text{ pointwise,}
\end{equation} where $Z_\sigma=\{x\in\mathbb{R}^3\sd x_1^2+x_2^2\leq \sigma^2\}$ and $F_\eta=\{ x\in\partial\Omega\sd \nn^\eta(x)\cdot\nu(x)=-1 \}$.
\item $\Gamma-$liminf: For any sequence $\Qex\in \A$ and any measurable set of finite perimeter $F\subset \mathbb{S}^2$, measurable functions $\nn^\eta:\Omega\rightarrow\mathbb{S}^2$ and a measurable set $\omega_\eta\subset\Omega$ that satisfy  $\lim_{\eta\rightarrow 0}|\omega_\eta|= 0$, $\Omega\setminus\omega_\eta$ simply connected with $\nn^\eta\in C^0(\Omega\setminus(Z_\sigma\cup\omega_\eta),\mathbb{S}^2)$ and (\ref{thm:main:cptness}) hold for all $\sigma>0$, we have
\begin{equation} \label{eq:lower_bound}
\liminf_{\eta\rightarrow 0} \eta\:\Eex(\Qex) \geq \E_0(F)\, .
\end{equation}
\item $\Gamma-$limsup: For any measurable set of finite perimeter $F\subset \mathbb{S}^2$ that is invariant under rotations w.r.t. the $\ee_3-$axis there exists a sequence $\Qex\in \A$ with $\Vert \Qex\Vert_{L^\infty}\leq \sqrt{\frac{2}{3}}s_*$ and measurable functions $\nn^\eta:\Omega\rightarrow\mathbb{S}^2$ with $\nn^\eta\in C^0(\Omega\setminus\omega_\eta,\mathbb{S}^2)$, $\lim_{\eta\rightarrow 0}|\omega_\eta|= 0$, $\Omega\setminus\omega_\eta$ simply connected, such that (\ref{thm:main:cptness}) holds for all $\sigma>0$ and
\begin{equation} \label{eq:upper_bound}
\limsup_{\eta\rightarrow 0} \eta\:\Eex(\Qex) \leq \E_0(F)\, .
\end{equation}
\end{enumerate}
\end{theorem}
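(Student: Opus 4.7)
The theorem has three parts (compactness, $\Gamma$-liminf, $\Gamma$-limsup) and the plan is to address them in that order, following the roadmap announced for Sections 3 and 4. For compactness, the first step is to exploit the bulk term: from $\eta\:\Eex(\Qex)\leq C$ and Proposition \ref{prop:prop_f}(3), the measure of $\{\dist(\Qex,\N)>\delta_0\}$ is of order $\xi^2/\eta$, which tends to $0$ in our regime, producing the vanishing exceptional set $\omega_\eta$. Outside $Z_\sigma\cup\omega_\eta$, $\Qex$ takes values in $\Sym\setminus\C$, so Proposition \ref{prop:prop_sym0}(3) allows us to define $\R\circ\Qex\in\N$; arguing (as announced in Section 3) via comparison with minimizers of an approximate problem and a lower bound on the energy cost of a singularity, one shows there are only finitely many genuine singularities outside any neighborhood of the $\ee_3$-axis. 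The complement is then simply connected and a continuous lift $\nn^\eta$ exists. The set $F_\eta=\{\nn^\eta\cdot\nu=-1\}\subset\mathbb{S}^2$ inherits a uniform BV bound from the defect energy, and BV compactness (combined with rotational invariance) yields $\chi_{F_\eta}\to\chi_F$ pointwise along a subsequence, for some rotation-invariant $F$ of finite perimeter.

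For the $\Gamma$-liminf, I would localize the energy into two disjoint regions: a small cylindrical neighborhood of the defect ring (a latitude circle in $\mathbb{S}^2$ extended rotationally into $\Omega$) and its complement. On the complement, $\Qex$ is close to $\N$ with a continuous lift, and on each radial ray issuing from $\omega\in\mathbb{S}^2$ I would apply the radial auxiliary problem of Section 3: the one-dimensional minimization of $\tfrac12|\partial_rQ|^2+\eta^{-2}g(Q)$ connecting the boundary director $\pm\omega$ to the asymptotic $\pm\ee_3$, after the change of variable $r=1+\eta t$, becomes $\eta$-independent with value $\sqrt[4]{24}\,s_*(1\mp\cos\theta)$, the sign depending on whether $\omega\in F$ or $\omega\in F^c$. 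Fubini and Fatou over $\mathbb{S}^2$ then give the two boundary integrals of $\E_0$. Near the defect ring, the energy exhibits a three-dimensional Ginzburg-Landau-type concentration of order $|\ln\xi|$ per unit length of defect line; a sharp lower bound, multiplied by $\eta$ and combined with $\eta|\ln\xi|\to\beta$, yields a $\beta$-linear contribution whose constant is precisely $\tfrac{\pi}{2}s_*^2$ per unit perimeter of $F$. Lower semicontinuity of the total variation under $\chi_{F_\eta}\to\chi_F$ closes the argument.

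For the $\Gamma$-limsup, by a density argument in $\E_0$ I may assume $\partial F$ is a finite disjoint union of smooth latitude circles. The recovery $\Qex$ is built by hand in three nested regions: far from $\partial\Omega$ it is essentially the uniaxial constant $s_*(\ee_3\otimes\ee_3-\tfrac13\id)$, with its orientation twisted along rays according to $\chi_F$; in a boundary layer of thickness $\eta$ near $\mathbb{S}^2$, I insert the optimal radial profile from the auxiliary problem, which reproduces the two boundary integrals of $\E_0$; and in a tube of cross-section $\xi$ around the rotationally symmetric defect ring, I use a standard biaxial escape through $\Sym$ to connect the two opposite asymptotic orientations, whose cost is $\tfrac{\pi}{2}s_*^2|\ln\xi|$ per unit length. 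Integrating along the ring and multiplying by $\eta$ yields the perimeter contribution $\tfrac{\pi}{2}s_*^2\beta|D\chi_F|(\mathbb{S}^2)$ in the limit. The bound $\Vert\Qex\Vert_{L^\infty}\leq\sqrt{2/3}\,s_*$ is enforced either by truncation using Proposition \ref{prop:prop_f}(2) or directly in the construction.

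The main obstacle I anticipate is the sharp lower bound near the defect ring. One must establish an asymptotically exact estimate for the Ginzburg-Landau-type core energy in the critical regime $\eta|\ln\xi|\to\beta$, separate it cleanly from the far-field contribution already counted by the radial auxiliary problem, and in particular avoid double counting at the intermediate scale where the boundary layer of thickness $\eta$ meets the defect core of thickness $\xi\ll\eta$. Handling this requires a careful localization in a cylindrical annulus around the ring and a matching argument between the logarithmic core profile (valued in $\Sym\setminus\N$ at small scales) and the profile prescribed by the auxiliary problem on good rays, both performed compatibly with the rotational equivariance encoded in $\A$; I expect this is where the bulk of the analytic work in Section 3 must concentrate.
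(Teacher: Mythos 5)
Your outline captures the paper's overall roadmap correctly: regularize, show finitely many singularities away from the axis, balance the singular core cost against the radial auxiliary problem away from singularities, and build an explicit layered recovery sequence. But there is a genuine gap in the compactness step, and a load-bearing intermediate device that you mention only in passing.

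You propose to set $\omega_\eta=\{\dist(\Qex,\N)>\delta_0\}$, whose measure is small by the bulk term. This alone does not yield what the statement requires. The complement of such a level set need not be simply connected (it can wind around the defect rings or consist of many small components that together carry topology), so no continuous lift need exist on $\Omega\setminus(Z_\sigma\cup\omega_\eta)$. In the paper, $\omega_\eta$ is constructed from the finite singular set $X_\epsilon$ itself: one removes wedge-shaped regions $\mathrm{conv}(B_\eta(x_k)\cup\{0\})\cap\Omega$ connecting each singular ball radially back to $\mathbb{S}^2$, which cuts the loops and forces simple connectedness. The finitely-many-singularities theorem you allude to is the right tool, but its output $X_\epsilon$ is what defines $\omega_\eta$, not a sublevel set of $\dist(\cdot,\N)$. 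Relatedly, the convergence $\chi_{F_\eta}\to\chi_F$ does not come from a BV compactness theorem applied to $F_\eta$; in the paper it follows from the fact that the transition region $S_{\sigma,\epsilon}\subset\mathbb{S}^2$ separating $F_{\sigma,\epsilon}$ and $\widetilde{F_{\sigma,\epsilon}}$ has vanishing measure as $\epsilon\to 0$, combined with compactness of the sphere and rotational symmetry.

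Second, the singularity-detection machinery (a Pokhozhaev identity and a clearing-out lemma) is not applicable to $\Qex$ directly: it needs $C^1$ regularity and a gradient bound of order $\epsilon^{-1}$, which a minimizing sequence $\Qex$ need not have. The paper first truncates to get $\|\widetilde{Q_\epsilon}\|_\infty\leq\sqrt{2/3}\,s_*$, then replaces $\widetilde{Q_\epsilon}$ by the minimizer $Q_\epsilon$ of the strictly convex perturbation $E_\epsilon^{2D}$ (adding $\tfrac{1}{2\epsilon^\gamma}|Q-\widetilde{Q_\epsilon}|^2$), proves the finiteness of singularities and the core lower bound for $Q_\epsilon$, and then transfers the conclusion back using $Q_\epsilon-\Qex\to 0$ in $L^2$ and Lemma 3.16 (which controls the replacement of $g(\widetilde{Q_\epsilon})$ by $g(\R\circ Q_\epsilon)$). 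Your mention of ``comparison with minimizers of an approximate problem'' gestures at this, but the argument actually runs entirely on $Q_\epsilon$ and not on $\Qex$; that transfer is an essential, not cosmetic, step. Otherwise your picture is right, with one small correction: by rotational equivariance, the core lower bound is a two-dimensional Ginzburg--Landau-type estimate in $\Omega'$ weighted by $\rho$ (which supplies the ring-length factor), not a genuinely three-dimensional one.
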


\begin{remark}\label{rem:Linfty_Qex}
\begin{enumerate}
\item In view of (\ref{eq:beta}) we can replace the bound $\eta\: \Eex(\Qex)\leq C$, by 
\begin{equation} \label{eq:energy_bound}
\Eex(\Qex)\leq C\: \left(1+|\ln(\xi)|\right)\, .
\end{equation}
\item The convergence we show is not a $\Gamma-$convergence in the classical sense since the limit functional is defined on a different functions space.
\end{enumerate}
\end{remark}

\section{Lower bound}

In this section we prove the lower bound of Theorem \ref{thm:main}. Our strategy to obtain the lower bound is the following: First, we approximate the sequence $\Qex$ by a more regular one named $Q_\epsilon$. We use $\epsilon:=\xi$ to  meet the notation in \cite{Alama2017,Canevari2013,Canevari2015} and let out $\eta$ in our notation since $\eta$ and $\xi$ are related via (\ref{eq:beta}), i.e.\ $\eta\sim \frac{\beta}{|\ln(\epsilon)|}$. We also write $\E_\epsilon$ instead of $\Eex$. We find that away from the $\ee_3$-axis the sequence $Q_\epsilon$ has only finitely many singularities  in the neighbourhood of which $Q_\epsilon$ is far from $\N$. Then we can estimate the energy of $Q_\epsilon$ nearby these points from below by balancing $|\nabla Q_\epsilon|^2$ and $f(Q_\epsilon)$. In the region where $Q_\epsilon$ is close to $\N$, we will use the optimal radial profile found in \cite{Alama2017} by balancing $|\nabla Q_\epsilon|^2$ and $g(Q_\epsilon)$.

\subsection{Preliminaries}

The construction of the approximation $Q_\epsilon$ of $\Qex$ follows several steps. First, we are going to show that $\Qex$ can be approximated by another function $\widetilde{\Qex}$ which verifies an additional $L^\infty-$bound.

\begin{proposition} \label{prop:Linfty_bound_Qex}
Let $\Qex\in H^1(\Omega,\Sym)$ such that (\ref{eq:energy_bound}) holds. Then there exists $\widetilde{\Qex}\in H^1(\Omega,\Sym)$ which decreases the energy $\Eex$, verifies
\begin{equation} \label{eq:L_infty_bound}
\Vert \widetilde{\Qex} \Vert_{L^\infty(\Omega)} \leq \sqrt{\frac{2}{3}}s_*
\end{equation} and $\widetilde{\Qex}-\Qex\rightarrow 0$ in $L^2$ as $\eta,\xi\rightarrow 0$.
\end{proposition}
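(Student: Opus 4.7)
The plan is to define $\widetilde{\Qex}$ as the pointwise nearest-point projection of $\Qex$ onto the closed ball $B \subset \Sym$ of radius $M \defi \sqrt{2/3}\, s_*$ centered at the origin; explicitly,
\[
\widetilde{\Qex}(x) \defi \begin{cases}
\Qex(x) & \text{if } |\Qex(x)| \leq M, \\[2pt]
M\, \Qex(x) / |\Qex(x)| & \text{otherwise.}
\end{cases}
\]
Because the projection onto a convex set is $1$-Lipschitz, $\widetilde{\Qex} \in H^1(\Omega, \Sym)$ with $|\nabla \widetilde{\Qex}| \leq |\nabla \Qex|$ almost everywhere, so the Dirichlet term decreases. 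The boundary condition is preserved since $|Q_b| \equiv M$, and the construction commutes with the $O(3)$-action on $\Sym$, so rotational equivariance is maintained; hence $\widetilde{\Qex}$ remains admissible.

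For the two nonlinear terms I would argue as follows. On the truncation set $\{|\Qex| > M\}$, $\widetilde{\Qex}$ is exactly the rescaling appearing in Property $2$ of Proposition \ref{prop:prop_f}, so $f(\widetilde{\Qex}) < f(\Qex)$ pointwise there, while off that set equality trivially holds. For the field term, $g(Q)$ depends only on the direction $Q/|Q|$ when $Q \neq 0$ (and vanishes at $Q=0$), and $\widetilde{\Qex}$ is a non-negative scalar multiple of $\Qex$, so $g(\widetilde{\Qex}) = g(\Qex)$ pointwise. Combined, $\Eex(\widetilde{\Qex}) \leq \Eex(\Qex)$.

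The main obstacle is the $L^2$ convergence. On $\{|\Qex| > M\}$ one has $|\widetilde{\Qex} - \Qex|^2 = (|\Qex| - M)^2$, so it suffices to prove the pointwise bound
\[
(|Q|-M)^2 \leq C\, f(Q) \qquad \text{for all } Q \in \Sym \text{ with } |Q| > M,
\]
with $C$ independent of $\xi,\eta$. The key observation is that every element of $\N$ has norm $M$, hence $\dist(Q,\N) \geq |Q| - M$ on that set. I would then split into three regimes: for $\dist(Q,\N) \leq \delta_0$ the inequality is immediate from the quadratic bound in Property $3$; for $\dist(Q,\N) > C_f$, enlarging $C_f$ if necessary so that $C_f \geq M+1$, the quartic bound in Property $4$ together with $|Q| \geq \dist(Q,\N) - M \geq 1$ yields $(|Q|-M)^2 \leq |Q|^2 \leq |Q|^4 \leq f(Q)/C$; and on the intermediate compact set $\{\delta_0 \leq \dist(Q,\N) \leq C_f\}$, $|Q|-M$ is bounded while $f$ has a positive lower bound by continuity, so the inequality holds there as well.

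Once this pointwise bound is in hand, the energy bound (\ref{eq:energy_bound}) gives
\[
\Vert \widetilde{\Qex} - \Qex \Vert_{L^2(\Omega)}^2 \leq C \int_\Omega f(\Qex) \dx x \leq C\, \xi^2 (1 + |\ln\xi|) \xrightarrow{\xi \to 0} 0,
\]
which finishes the proof. The only nontrivial point I expect is stitching the three regimes of $f$ together into a single pointwise inequality; the remainder is a direct application of the properties collected in Propositions \ref{prop:prop_f} and \ref{prop:prop_g}.
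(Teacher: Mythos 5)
Your proof is correct and follows essentially the same route as the paper: you define the same truncation onto the ball of radius $\sqrt{2/3}\,s_*$, invoke the same Properties 2--4 of Proposition \ref{prop:prop_f} and the scale invariance of $g$, and split into the same three regimes of $\dist(Q,\N)$. The only cosmetic difference is that you package the three cases into a single pointwise inequality $(|Q|-M)^2 \leq C\,f(Q)$ before integrating, whereas the paper estimates the integral on each set directly (using a measure bound on the intermediate set), but the substance is identical.
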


\begin{proof}
We can define $\widetilde{\Qex}$ as
$$ \widetilde{\Qex} \defi \begin{cases} \sqrt{\frac{2}{3}}s_*\frac{\Qex}{|\Qex|} & \text{if }|\Qex|>\sqrt{\frac{2}{3}}s_*\, , \\ \Qex & \text{otherwise.} \end{cases} $$
This function is clearly admissible and has lower Dirichlet energy. From Proposition \ref{prop:prop_f} it follows that $f(\widetilde{\Qex})\leq f(\Qex)$ and by the scaling invariance of $g$ we find that $g(\widetilde{\Qex})=g(\Qex)$. Hence $\Eex(\widetilde{\Qex})\leq \Eex(\Qex)$. The $L^\infty-$ bound is obvious. So it remains to show that $\Vert \widetilde{\Qex}-\Qex\Vert_{L^2(\Omega)}$ converges to zero as $\eta,\xi\rightarrow 0$. 
We remark that $\int |\widetilde{\Qex}-\Qex|^2 \leq \int (\max\{|\Qex|-\sqrt{\frac{2}{3}}s_*, 0\})^2$ and decompose $\Omega$ into three sets:
$$\Omega=\{\dist(\Qex,\N)< \delta_0\}\cup\{\dist(\Qex,\N)>  C_f\}\cup\{\delta_0\leq \dist(\Qex,\N)\leq C_f\}\, ,$$
where $\delta_0$ and $C_f$ are the constants from Proposition \ref{prop:prop_f}.
On the first set, we use 3. in Proposition \ref{prop:prop_f} and the energy bound to get
$$ \int_{\{\dist(\Qex,\N)< \delta_0\}} |\widetilde{\Qex} - \Qex|^2 \dx x \leq \frac{1}{\gamma_1}\int_\Omega f(\Qex)\dx x \leq C (1+|\ln\xi|)\xi^2\, ,  $$
which converges to zero as $\xi\rightarrow 0$. 
Similarly, on the second set we use $|\widetilde{\Qex}-\Qex|^2\leq |\Qex|^2\leq C |\Qex|^4$ and 4. in Proposition \ref{prop:prop_f} to get
$$ \int_{\{\dist(\Qex,\N) > C_f\}} |\widetilde{\Qex} - \Qex|^2 \dx x \leq C \int_\Omega f(\Qex)\dx x \leq C (1+|\ln\xi|)\xi^2\, ,  $$
which again vanishes in the limit $\xi\rightarrow 0$.
For the last set, where $\delta_0\leq \dist(\Qex,\N)\leq C_f$, we can estimate the measure using that $f(\Qex)\geq c>0$ on this set
\begin{align*}
|\{x\in\Omega\sd \delta_0 \leq \dist(\Qex,\N)\leq C_f\}| &\leq \frac{1}{c}\int_{\{\delta_0\leq \dist(\Qex,\N)\leq C_f\}} f(\Qex) \dx x \\
&\leq C(1+|\ln\xi|) \:\xi^2\, .
\end{align*}
Then, using that both $|\widetilde{\Qex}|$ and $|\Qex|$ are bounded by $\sqrt{\frac{2}{3}}s_* + C_f$, the integral $ \int |\widetilde{\Qex}-\Qex|^2\dx x $ over the third set converges to zero for $\xi\rightarrow 0$. 
Combining these three results proves the Proposition.
\end{proof}

From now on, we will use the notation with $\epsilon$ replacing $\eta,\xi$, i.e.\ $\widetilde{Q_\epsilon}\defi\widetilde{\Qex}$. 
The next step will be defining the more regular sequence $Q_\epsilon$ replacing $\widetilde{Q_\epsilon}$. In view of the lower bound for the claimed $\Gamma-$limit we still want $Q_\epsilon$ to be rotationally equivariant and that it converges to the same limit as $\widetilde{Q_\epsilon}$, while decreasing the energy.

We thus define the three dimensional approximate energy for $0<\gamma<2$ and $\omega\subset\Omega$
\begin{align*}
E_\epsilon^{3D}(Q,\omega) = \int_\omega \frac{1}{2}|\nabla Q|^2 + \frac{1}{\epsilon^2} f(Q) + \frac{1}{2\epsilon^\gamma}|Q - \widetilde{Q_\epsilon}|^2 \dx x\, .
\end{align*}
Wee seek $Q_\epsilon$ by minimizing $E_\epsilon^{3D}(Q,\Omega)$ among rotationally equivariant fields $Q$.
Because of the equivariance, the problem can de stated as a two dimensional problem. Indeed, calculating $|\partial_\varphi Q|^2$ for a rotationally equivariant map $Q\in H^1(\Omega,\Sym)$, and using the equivariance, we can write $Q(\rho,\varphi,z) = R_\varphi^\top Q(\rho,0,z) R_\varphi$ and thus
\begin{align*}
|\partial_\varphi Q|^2 &= \left| (\partial_\varphi R_\varphi)^\top Q R_\varphi  +  R_\varphi^\top Q (\partial_\varphi R_\varphi)  \right|^2 = |Q|^2 + 6 (Q_{12}^2 - Q_{11}Q_{22})\, .
\end{align*}
This expression does no longer depend on $\varphi$. In order to shorten notation, we introduce the matrix 
\begin{align*}
Q_{2\times 2} \defi \frac{1}{2}\left( \frac{\partial}{\partial Q_{ij}} |\partial_\varphi Q|^2 \right)_{ij} = \begin{pmatrix}
2(Q_{11} - Q_{22}) & 4 Q_{12} & Q_{13} \\
4Q_{21}  &  2(Q_{22}- Q_{11}) & Q_{23} \\
Q_{31} & Q_{32} & 0
\end{pmatrix}\, . 
\end{align*} Note that, $Q_{2\times 2}:Q = \frac{1}{2}|\partial_\varphi Q|^2$. So the whole energy does not depend on $\varphi$ any more and using cylindrical coordinates, it can be rewritten as
\begin{align*}
E_\epsilon^{3D}(Q_\epsilon,\Omega) &= \int_0^{2\pi} E_\epsilon^{2D}(Q_\epsilon,\Omega') \dx\varphi = 2\pi\: E_\epsilon^{2D}(Q_\epsilon,\Omega')  \, ,
\end{align*} where $E_\epsilon^{2D}$ is the two dimensional energy given by
\begin{align*}
E_\epsilon^{2D}(Q,\omega') = \int_{\omega'} \frac{\rho}{2}|\nabla' Q|^2 + \frac{1}{\rho} Q_{2\times 2}:Q + \frac{\rho}{\epsilon^2} f(Q) + \frac{\rho}{2\epsilon^\gamma}|Q - \widetilde{Q_\epsilon}|^2 \dx \rho \dx z\, ,
\end{align*} where $\nabla'=(\partial_\rho,\partial_z)$ denotes the two dimensional gradient and $\omega'\subset\Omega'=\{ (\rho,z)\in\mathbb{R}^2\sd \rho>0\: ,\: \rho^2+z^2> 1 \}$. In order to shorten notation, we are going to write $\frac{1}{2}|\nabla Q|^2$ instead of $\frac{1}{2}|\nabla' Q|^2+\frac{1}{\rho^2}Q_{2\times 2}:Q$ whenever we make no use of this division of the gradient.
Now we define $Q_\epsilon$ to be 
\begin{equation} \label{def:reg_seq_low_energy}
Q_\epsilon \defi \underset{Q\in \A'}{\mathrm{argmin}}\: E_\epsilon^{2D}(Q,\Omega')\, ,
\end{equation} 
where $\A'=\{Q\in H^1(\Omega',\Sym)\sd \eqref{eq:bc} \text{ holds for } \rho^2+z^2=1 \}$.
We eventually extend $Q_\epsilon$ to a map in $H^1(\Omega,\Sym)$ which we will also call $Q_\epsilon$ by defining $Q_\epsilon(\rho,\varphi,z)\defi R_\varphi^\top Q_\epsilon(\rho,z) R_\varphi$.

\begin{remark} \label{rem:def_Q_eps}
\begin{enumerate}
\item Note that $\widetilde{Q_\epsilon}|_{\Omega'}$ is an admissible function in (\ref{def:reg_seq_low_energy}), so that $Q_\epsilon$ does exist.
\item Neglecting the contribution from $g$ for a moment, then $Q_\epsilon$ has lower energy than $\widetilde{Q_\epsilon}$.
\item Thanks to the energy bound in (\ref{eq:energy_bound}) we know that
$$ \Vert Q_\epsilon - \widetilde{Q_\epsilon}\Vert_{L^2(\Omega)}^2 \leq C (|\ln\epsilon| + 1) \epsilon^\gamma \rightarrow 0 \quad \text{ as } \epsilon\rightarrow 0\, , $$ 
i.e.\ the two sequences have the same limit for vanishing $\epsilon$.
\item The minimizer $Q_\epsilon$ solves the two dimensional Euler-Lagrange equation
\begin{equation} \label{rem:eq:elg_of_Q_eps}
-\rho \Delta Q_\epsilon + \frac{1}{\rho} Q_{\epsilon,2\times 2} - \partial_\rho Q_\epsilon + \frac{\rho}{\epsilon^2} Df(Q) + \frac{\rho}{\epsilon^\gamma} (Q_\epsilon - \widetilde{Q_\epsilon})  = \Lambda\: \id\, .
\end{equation}
Note that the equation contains an additional term (RHS) due to the fact that $\Sym$ is a subspace of the space of real matrices, i.e.\ a Lagrange multiplier $\Lambda$ is needed to ensure the tracelessness constraint.
\item The function $Q_\epsilon$ also solves the three dimensional Euler-Lagrange equation
\begin{equation} \label{rem:eq:elg_of_Q_eps_3D}
-\Delta Q_\epsilon +  \frac{1}{\epsilon^2} Df(Q_\epsilon) + \frac{1}{\epsilon^\gamma} (Q_\epsilon - \widetilde{Q_\epsilon})  = \Lambda_{3D}\: \id\, ,
\end{equation} despite the fact that it does not need to be a minimizer of $E_\epsilon^{3D}$. To see this, write
\begin{align*}
\Lambda_{3D}\: \id &= -\Delta Q_\epsilon + \frac{1}{\epsilon^2} Df(Q_\epsilon) + \frac{1}{\epsilon^\gamma}(Q_\epsilon - \widetilde{Q_\epsilon}) \\
&= -\partial_\rho^2 Q_\epsilon - \frac{1}{\rho} \partial_\rho Q_\epsilon - \frac{1}{\rho^2}\partial_\varphi^2 Q_\epsilon - \partial_z^2 Q_\epsilon + \frac{1}{\epsilon^2} Df(Q) + \frac{1}{\epsilon^\gamma} (Q_\epsilon - \widetilde{Q_\epsilon}) \\
&= R_\varphi^\top \left( -\partial_\rho^2 Q_\epsilon - \frac{1}{\rho} \partial_\rho Q_\epsilon - \partial_z^2 Q_\epsilon  + \frac{1}{\epsilon^\gamma} (Q_\epsilon - \widetilde{Q_\epsilon}) \right) R_\varphi \\
&\:\:\:\:\:- \frac{1}{\rho^2}\partial_\varphi^2 (R_\varphi^\top Q_\epsilon R_\varphi) + \frac{1}{\epsilon^2} Df(R_\varphi^\top Q_\epsilon R_\varphi)\, .
\end{align*} One can explicitly calculate that $\partial_\varphi^2 (R_\varphi^\top Q_\epsilon R_\varphi) = R_\varphi^\top Q_{2\times2,\epsilon}R_\varphi$ and since $Df(P) = -a P - b P^2 + c\,\tr(P^2)P$ for symmetric matrices $P$ we also have $Df(R_\varphi^\top Q_\epsilon R_\varphi) = R_\varphi^\top Df(Q_\epsilon) R_\varphi$. This implies that a rotationally equivariant extended solution of (\ref{rem:eq:elg_of_Q_eps}) is also solution of (\ref{rem:eq:elg_of_Q_eps_3D}).
\end{enumerate}
\end{remark}

The last part of this subsection will be the following Proposition which quantifies the regularity we have gained by replacing $\widetilde{Q_\epsilon}$ with $Q_\epsilon$. This result relies on the three dimensional Euler-Lagrange equation. In fact, this is the only time we use (\ref{rem:eq:elg_of_Q_eps_3D}) and cannot use (\ref{rem:eq:elg_of_Q_eps}) due to its singular behaviour near $\rho=0$.

\begin{proposition} \label{prop:bounds_Q_eps}
Let $\Vert \widetilde{Q_\epsilon}\Vert_{L^\infty}\leq 1$ and let $Q_\epsilon$ be the rotationally equivariant extended minimizer of (\ref{def:reg_seq_low_energy}). Then $Q_\epsilon\in C^1(\Omega,\Sym)$,
$$ \Vert Q_\epsilon\Vert_{L^\infty} \leq \sqrt{\frac{2}{3}}s_* \quad\text{and}\quad \Vert \nabla Q_\epsilon\Vert_{L^\infty} \leq \frac{C}{\epsilon}\, . $$
\end{proposition}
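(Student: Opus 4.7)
The plan is to obtain the $L^\infty$ bound by a truncation/minimality argument in the spirit of Proposition \ref{prop:Linfty_bound_Qex}, and then to deduce $C^1$ regularity together with the pointwise gradient estimate by rescaling the three-dimensional Euler--Lagrange equation \eqref{rem:eq:elg_of_Q_eps_3D} and invoking standard semilinear elliptic regularity.

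For the $L^\infty$ bound, I would introduce the truncated competitor
$$\bar Q_\epsilon(x) := \begin{cases}\sqrt{\tfrac{2}{3}}\,s_*\,\dfrac{Q_\epsilon(x)}{|Q_\epsilon(x)|} & \text{if } |Q_\epsilon(x)|>\sqrt{\tfrac{2}{3}}\,s_*,\\ Q_\epsilon(x) & \text{otherwise,}\end{cases}$$
which is the nearest point projection of $Q_\epsilon$ onto the closed ball $B:=\{P\in\Sym : |P|\leq\sqrt{2/3}\,s_*\}$. Since $B$ is rotationally invariant, $\bar Q_\epsilon$ is still rotationally equivariant; since $|Q_b|=\sqrt{2/3}\,s_*$, the boundary condition is preserved; and since the projection onto a convex set is $1$-Lipschitz, $\bar Q_\epsilon\in H^1$ with $|\nabla\bar Q_\epsilon|\leq |\nabla Q_\epsilon|$ a.e. The bulk term decreases by Proposition \ref{prop:prop_f}(2), and the contraction property of the projection combined with $\widetilde{Q_\epsilon}\in B$ (this is where the hypothesis $\|\widetilde{Q_\epsilon}\|_{L^\infty}\leq\sqrt{2/3}\,s_*$ of Proposition \ref{prop:Linfty_bound_Qex} is used) yields $|\bar Q_\epsilon-\widetilde{Q_\epsilon}|\leq|Q_\epsilon-\widetilde{Q_\epsilon}|$. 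Summing these pointwise inequalities gives $E_\epsilon^{2D}(\bar Q_\epsilon,\Omega')\leq E_\epsilon^{2D}(Q_\epsilon,\Omega')$, with strict inequality (via the $f$ term, by Proposition \ref{prop:prop_f}(2)) unless $\{|Q_\epsilon|>\sqrt{2/3}\,s_*\}$ has zero measure. Minimality of $Q_\epsilon$ then forces $\|Q_\epsilon\|_{L^\infty}\leq\sqrt{2/3}\,s_*$.

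For the regularity and the gradient bound I would exploit the three-dimensional Euler--Lagrange equation \eqref{rem:eq:elg_of_Q_eps_3D}, whose crucial advantage is the absence of the singular $1/\rho$ term present in \eqref{rem:eq:elg_of_Q_eps}. Taking the trace of \eqref{rem:eq:elg_of_Q_eps_3D} and using $\tr Q_\epsilon=\tr\widetilde{Q_\epsilon}=0$, one eliminates the multiplier: $\Lambda_{3D}=\tfrac{1}{3\epsilon^2}\tr(Df(Q_\epsilon))$. Thus $Q_\epsilon$ solves a semilinear elliptic system with smooth polynomial nonlinearity and, by the $L^\infty$ bound just obtained, a uniformly bounded right-hand side. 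For $x_0\in\Omega$ at distance at least $\epsilon$ from $\partial\Omega$, define the rescaled field $\hat Q(y):=Q_\epsilon(x_0+\epsilon y)$ on $B_1(0)\subset\mathbb{R}^3$; it solves
$$-\Delta\hat Q+Df(\hat Q)-\tfrac{1}{3}\tr(Df(\hat Q))\,\id+\epsilon^{2-\gamma}\bigl(\hat Q-\widetilde{Q_\epsilon}(x_0+\epsilon\,\cdot\,)\bigr)=0,$$
with bounded nonlinearity and, since $\gamma<2$, a bounded perturbation term uniformly in $\epsilon$. Standard interior elliptic estimates ($W^{2,p}$ followed by Schauder bootstrap) yield $\|\hat Q\|_{C^{1,\alpha}(B_{1/2})}\leq C$ uniformly in $\epsilon$ and $x_0$; undoing the rescaling gives $|\nabla Q_\epsilon(x_0)|\leq C/\epsilon$. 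Near $\partial\Omega$, the same argument in half-balls with the smooth Dirichlet datum $Q_b$ provides the corresponding boundary Schauder estimate, and covering $\Omega$ by interior and boundary patches yields $Q_\epsilon\in C^1(\Omega,\Sym)$ with the claimed bound.

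The main obstacles I anticipate are, first, justifying that the rotationally equivariant extension of the two-dimensional minimizer genuinely satisfies the three-dimensional system \eqref{rem:eq:elg_of_Q_eps_3D}, and not merely \eqref{rem:eq:elg_of_Q_eps} (this is already sketched in Remark \ref{rem:def_Q_eps}(5), and without it the rescaling argument would break down near the $\ee_3$-axis where \eqref{rem:eq:elg_of_Q_eps} becomes singular); and second, keeping track of the Lagrange multiplier throughout the regularity estimates, which is however handled cleanly by the trace identity above, reducing the problem to a standard vector-valued semilinear equation with bounded coefficients, after which the rescaling argument is routine.
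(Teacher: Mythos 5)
Your argument is correct and follows essentially the same route as the paper: the truncated competitor $\overline{Q_\epsilon}$ combined with the minimality of $Q_\epsilon$ for $E_\epsilon^{2D}$ gives the $L^\infty$ bound, and elliptic regularity applied to the three-dimensional Euler--Lagrange equation \eqref{rem:eq:elg_of_Q_eps_3D} gives $C^1$ regularity and $\|\nabla Q_\epsilon\|_{L^\infty}\leq C/\epsilon$. The paper simply cites \cite[Lemma A.2]{Bethuel1993} for the gradient estimate (together with the $L^\infty$ bound and $\gamma<2$), and your rescaling-plus-Schauder argument, including the elimination of the Lagrange multiplier via the trace identity, is precisely the content of that lemma written out.
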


\begin{proof}
From equation (\ref{rem:eq:elg_of_Q_eps_3D}) and by elliptic regularity we deduce that for $\widetilde{Q_\epsilon}\in H^1$ we have $Q_\epsilon\in H^3$, i.e.\ $Q_\epsilon\in C^{1,\frac{1}{2}}$ since we are in dimension $3$. Note that the boundary of $\Omega$ is smooth. To prove the $L^\infty$-bounds we define a comparison map
$$ \overline{Q_\epsilon} \defi \begin{cases} \sqrt{\frac{2}{3}}s_*\frac{Q_\epsilon}{|Q_\epsilon|} & \text{if } |Q_\epsilon|>\sqrt{\frac{2}{3}}s_*\, , \\ Q_\epsilon & \text{otherwise.} \end{cases} $$
Then $|\nabla \overline{Q_\epsilon}|\leq |\nabla Q_\epsilon|$, $f(\overline{Q})\leq f(Q_\epsilon)$ by Proposition \ref{prop:prop_f} and $|\overline{Q_\epsilon}-\widetilde{Q_\epsilon}| \leq |Q_\epsilon-\widetilde{Q_\epsilon}|$. Hence $E_\epsilon^{3D}(\overline{Q_\epsilon},\Omega)\leq E_\epsilon^{3D}(Q_\epsilon,\Omega)$ with strict inequality unless $\overline{Q_\epsilon}=Q_\epsilon$.
The estimate $\Vert \nabla Q_\epsilon\Vert_{L^\infty} \leq \frac{C}{\epsilon}$ follows from \cite[Lemma A.2]{Bethuel1993}, using (\ref{rem:eq:elg_of_Q_eps_3D}), (\ref{eq:L_infty_bound}) and $\gamma< 2$.
\end{proof}

\subsection{Finite number of singularities away from $\rho=0$}

We introduce the notation $\Omega_\sigma\defi \{ x\in\Omega\sd x_1^2+x_2^2\geq \sigma^2 \} =  \Omega\setminus Z_\sigma$ for $\sigma>0$, with $Z_\sigma$ defined as in Theorem \ref{thm:main}.
In the same spirit, we define the two dimensional analogue $\Omega_\sigma' = \{(\rho,z)\in\Omega'\sd \rho> \sigma\}$, i.e.\ $\Omega_\sigma$ can be obtained from $\Omega_\sigma'$ through rotation around the $\ee_3-$axis.

The main theorem we want to prove in this subsection is the following:

\begin{theorem} \label{thm:finite_set_of_sing}
For all $\sigma,\delta>0$ there exists $\lambda_0,\epsilon_0>0$ and a set $X_\epsilon\subset\overline{\Omega'}$ such that for $\epsilon\leq \epsilon_0$
\begin{enumerate}
\item The set $X_\epsilon$ is finite and its cardinality is bounded independently of $\epsilon$.
\item If $x\in\Omega_\sigma'$ and $\dist(x,X_\epsilon)>\lambda_0 \epsilon$, then $\dist(Q_\epsilon(x),\N)\leq \delta$.
\end{enumerate}
\end{theorem}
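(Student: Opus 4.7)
The plan is to combine the Lipschitz control of Proposition \ref{prop:bounds_Q_eps} with a Jerrard-Sandier style ball-growth argument and to exploit the minimality of $Q_\epsilon$ in order to eliminate topologically trivial bubbles. Throughout, let $\mu>0$ be the constant, determined by $\|\nabla Q_\epsilon\|_{L^\infty}\leq C/\epsilon$, such that $\dist(Q_\epsilon,\N)>\delta/2$ on $B(x_0,\mu\epsilon)$ whenever $\dist(Q_\epsilon(x_0),\N)>\delta$. By items 3 and 4 of Proposition \ref{prop:prop_f} together with compactness of the intermediate shell, there is $c_\delta>0$ such that $f(Q)\geq c_\delta$ whenever $\dist(Q,\N)\geq\delta/2$.

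\textbf{Preliminary covering.} Setting $S_\epsilon=\{x\in\Omega_\sigma':\dist(Q_\epsilon(x),\N)>\delta/2\}$, the energy bound $E_\epsilon^{2D}(Q_\epsilon)\leq C|\ln\epsilon|$ and the estimate $\rho\geq\sigma$ on $\Omega_\sigma'$ give
\begin{equation*}
\tfrac{\sigma c_\delta}{\epsilon^2}|S_\epsilon|\leq \int_{S_\epsilon}\tfrac{\rho}{\epsilon^2}f(Q_\epsilon)\leq C|\ln\epsilon|,
\end{equation*}
so $|S_\epsilon|\leq C\epsilon^2|\ln\epsilon|$. A maximal $\mu\epsilon$-separated family $\{x_i\}_{i=1}^{N_\epsilon}\subset S_\epsilon$ then has cardinality $N_\epsilon\leq C|\ln\epsilon|$, and the enlarged balls $B(x_i,2\mu\epsilon)$ cover $\{\dist(Q_\epsilon,\N)>\delta\}\cap\Omega_\sigma'$.

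\textbf{Ball growth and topological bookkeeping.} To improve $N_\epsilon\leq C|\ln\epsilon|$ to a bound independent of $\epsilon$, I would run a standard Jerrard-Sandier construction starting from $B_i^0=B(x_i,\mu\epsilon)$: grow the balls simultaneously in $\Omega_\sigma'$, merging whenever two touch, up to a fixed target radius $r_0>0$ independent of $\epsilon$. For each terminal ball $B$ on whose boundary $Q_\epsilon$ lies within distance $\delta/2$ of $\N$ (which can be arranged by enlarging slightly, using a coarea argument on $\dist(Q_\epsilon,\N)$), the projection $\R\circ Q_\epsilon\vert_{\partial B}$ has a well-defined $\mathbb{Z}/2$-degree coming from $\pi_1(\mathbb{R}P^2)=\mathbb{Z}/2$. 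A radial slicing of the annulus $B\setminus B_i^0$ yields the logarithmic lower bound
\begin{equation*}
\int_B\left(\tfrac{1}{2}|\nabla Q_\epsilon|^2+\tfrac{1}{\epsilon^2}f(Q_\epsilon)\right)\rho\dx\rho\dx z\;\geq\; c\,\sigma\,\log\frac{r_0}{\mu\epsilon}
\end{equation*}
whenever $B$ carries a non-trivial charge. Combined with the energy bound $E_\epsilon^{2D}(Q_\epsilon)\leq C|\ln\epsilon|$, this forces the number of such non-trivial terminal balls to be at most a constant $M$ independent of $\epsilon$. For each remaining, topologically \emph{trivial} terminal ball, $\R\circ Q_\epsilon\vert_{\partial B}$ extends to a continuous map $B\to\N$; replacing $Q_\epsilon\vert_B$ by a smooth $\N$-valued extension matched to $Q_\epsilon$ in a thin annulus saves at least $\sigma c_\delta\mu^2>0$ in the $f$-term, while the Dirichlet contribution, the axisymmetric term $\tfrac{1}{\rho}Q_{2\times 2}\!:\!Q$ and the $L^2$-penalty $|Q-\widetilde{Q_\epsilon}|^2/\epsilon^\gamma$ change by amounts depending only on $r_0,\sigma,\delta$. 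For $\epsilon\leq\epsilon_0$ small enough this contradicts the minimality of $Q_\epsilon$, so no trivial terminal balls survive. Setting $X_\epsilon$ to be the centers of the surviving balls and choosing $\lambda_0>2\mu$ then yields both conclusions of the theorem.

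\textbf{Main obstacle.} The hardest point is the logarithmic lower bound in the ball-growth step, and in particular its adaptation to our setting: the domain $\Omega_\sigma'$ is a non-simply-connected curved planar region, $Q_\epsilon$ is $\Sym$-valued rather than $\N$-valued, and one needs to track carefully how the $\mathbb{Z}/2$-charge recombines under merges while controlling the additional $\tfrac{1}{\rho}Q_{2\times 2}\!:\!Q$ term present in $E_\epsilon^{2D}$. A secondary delicate point is the competitor construction used to rule out trivial bubbles: it must be controlled simultaneously in every term of $E_\epsilon^{2D}$, in particular in the $L^2$-penalty against $\widetilde{Q_\epsilon}$, which is known only to be uniformly bounded and not close to $\N$ a priori inside the bubble.
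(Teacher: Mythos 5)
Your strategy is genuinely different from the paper's, and it has a gap that appears difficult to close. The ball-growth step applied to the non-trivial terminal balls is fine: the logarithmic lower bound of order $c\sigma\log(r_0/\mu\epsilon)$ does bound their number by a constant once compared to $E_\epsilon^{2D}(Q_\epsilon,\Omega')\leq C|\ln\epsilon|$. The problem is the competitor argument you use to eliminate topologically trivial terminal balls. You claim that replacing $Q_\epsilon$ on such a ball $B$ (of fixed radius $r_0$) by an $\N$-valued competitor $\hat{Q}$ saves at least $\sigma c_\delta\mu^2$ in the $f$-term while the other terms ``change by amounts depending only on $r_0,\sigma,\delta$.'' For the $L^2$-penalty this is precisely the step that fails: its coefficient is $1/(2\epsilon^\gamma)$, and $\widetilde{Q_\epsilon}$ is only known to be uniformly bounded inside $B$, not close to $\N$. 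If $\widetilde{Q_\epsilon}$ sits at a fixed distance from $\N$ on a subset of $B$ of measure comparable to $r_0^2$ and $Q_\epsilon$ tracks it there (which it may well do, since the penalty in $E_\epsilon^{2D}$ actively pushes $Q_\epsilon$ toward $\widetilde{Q_\epsilon}$), then $\frac{1}{\epsilon^\gamma}\int_B|\hat{Q}-\widetilde{Q_\epsilon}|^2-\frac{1}{\epsilon^\gamma}\int_B|Q_\epsilon-\widetilde{Q_\epsilon}|^2$ is of order $r_0^2/\epsilon^\gamma\to\infty$, swamping the $O(1)$ savings in $f$. So for small $\epsilon$ the competitor is worse, not better, and you get no contradiction with minimality. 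This is exactly your ``secondary delicate point,'' but it is not secondary — it is the crux. Note also that $\Omega'$ is unbounded, so without eliminating trivial balls you cannot fall back on a volume count to bound their number.

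The paper sidesteps this altogether. Instead of a global competitor it uses an inner-variation (dilation) competitor, encoded in the Pokhozhaev identity of Proposition~\ref{prop:pohozaev}. Combined with the averaging Lemma~\ref{lem:averaging}, this gives the key Lemma~\ref{lem:bound_mean_f}: $\frac{1}{\epsilon^2}\int_{B_{\epsilon^\alpha}(x_0)\cap\Omega'}\rho\, f(Q_\epsilon)\leq C_\alpha$ uniformly in $\epsilon$, a drastic localization of the global $O(|\ln\epsilon|)$ bound. Feeding this into the clearing-out Proposition~\ref{prop:clearing_out} yields Proposition~\ref{prop:bound_grad_Q_eps_eta_log_dist}: \emph{any} point of $\Omega_\sigma'$ at distance $>\delta$ from $\N$ costs at least $\zeta_\alpha(1+|\ln\epsilon|)\sigma$ in $E_\epsilon^{2D}$ on the $\epsilon^\alpha$-scale ball around it, irrespective of topology, and a two-scale Vitali covering then gives the finiteness. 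If you want to repair your proof, this is the mechanism you would essentially have to reintroduce: it is the place where the minimality of $Q_\epsilon$ with respect to $E_\epsilon^{2D}$ (including the $L^2$-penalty) is exploited in a way that survives the singular $1/\epsilon^\gamma$ coefficient.
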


The general idea behind this subsection is the same as in \cite{Canevari2013,Canevari2015}, where the analysis has been carried out for the case of minimizers of the energy $\int|\nabla Q_\epsilon|^2 + \frac{1}{\epsilon^2} f(Q_\epsilon)$ and uses ideas from \cite{Bethuel1999}. We will show that in our situation with the additional term $\frac{1}{\epsilon^\gamma}\Vert Q_\epsilon-\widetilde{Q_\epsilon}\Vert^2_{L^2}$ the same results hold. 
There are two main ingredients for the proof of Theorem \ref{thm:finite_set_of_sing}: Proposition \ref{prop:bound_grad_Q_eps_eta_log_dist} that tells us that a singularity has an energy cost of order $|\ln\epsilon|$ and Proposition \ref{prop:clearing_out} that allows us to deduce that $Q_\epsilon$ is close to being uniaxial provided $\frac{1}{\epsilon^2}\int f(Q_\epsilon)$ is sufficiently small. While the second ingredient uses only the regularity of $Q_\epsilon$, the first one makes use of equation (\ref{rem:eq:elg_of_Q_eps}) in the form of the following Proposition.

\begin{proposition}[\foreignlanguage{russian}{Похожаев}] \label{prop:pohozaev}
Let $Q_\epsilon$ be the minimizer of (\ref{def:reg_seq_low_energy}) and $\omega'\subset\Omega'$ open with Lipschitz boundary, $\overline{x}\in\omega'$. Then
\begin{align*}
\int_{\partial\omega'} \rho &((x-\overline{x})\cdot\nu) \bigg( \frac{1}{2}|\nabla' Q_\epsilon|^2 + \frac{1}{2\rho^2}|\partial_\varphi Q_\epsilon|^2 + \frac{1}{\epsilon^2}f(Q_\epsilon) + \frac{1}{2\epsilon^\gamma} |Q_\epsilon - \widetilde{Q_\epsilon}|^2 \bigg) \\
&= \frac{1}{2}\int_{\omega'} \rho |\nabla'Q_\epsilon|^2 + \frac{1}{2}\int_{\omega'} \frac{1}{\rho} |\partial_\varphi Q_\epsilon|^2 + \frac{3}{\epsilon^2} \int_{\omega'} \rho f(Q_\epsilon) + \frac{3}{2\epsilon^\gamma} \int_{\omega'}\rho |Q_\epsilon - \widetilde{Q_\epsilon}|^2   \\
&\:\:\:\:\:+ \frac{1}{\epsilon^\gamma} \int_{\omega'}\rho(Q_\epsilon-\widetilde{Q_\epsilon}):((x-\overline{x})\cdot\nabla'\widetilde{Q}) + \int_{\partial\omega'} \rho\:((x-\overline{x})\cdot \nabla' Q_\epsilon):(\nu\cdot\nabla' Q_\epsilon)\, ,
\end{align*}
where $\nu$ denotes the outward unit normal vector on $\partial\omega'$.
\end{proposition}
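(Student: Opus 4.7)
The identity is a Pohozaev (Rellich--Noether) identity for critical points of the weighted two-dimensional energy $E_\epsilon^{2D}$, so my plan is to test the Euler-Lagrange equation (\ref{rem:eq:elg_of_Q_eps}) against the infinitesimal dilation $v\defi ((x-\overline{x})\cdot\nabla')Q_\epsilon$, contract via the Frobenius product, and integrate over $\omega'$. The Lagrange multiplier drops out at once, since $\Lambda\,\id:v = \Lambda\,(x-\overline{x})\cdot\nabla'\tr(Q_\epsilon) = 0$ for traceless $Q_\epsilon$, which is why no multiplier appears in the final identity.

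I would then transform each remaining term separately. For the potential, $Df(Q_\epsilon):v=(x-\overline{x})\cdot\nabla' f(Q_\epsilon)$ is a pure divergence, so integration against the weight $\rho/\epsilon^2$ using $\nabla'\cdot(\rho(x-\overline{x}))=2\rho+(x-\overline{x})_1$ yields the boundary flux of $f$ together with the bulk coefficient $3/\epsilon^2$ in front of $\int_{\omega'}\rho f(Q_\epsilon)$, effectively a Pohozaev factor ``$n=3$''. The penalization term is handled the same way after the splitting
$$
(Q_\epsilon-\widetilde{Q_\epsilon}):v=\tfrac{1}{2}(x-\overline{x})\cdot\nabla'|Q_\epsilon-\widetilde{Q_\epsilon}|^2+(Q_\epsilon-\widetilde{Q_\epsilon}):(x-\overline{x})\cdot\nabla'\widetilde{Q_\epsilon}\, ,
$$
the first piece integrating by parts to give the boundary flux of $|Q_\epsilon-\widetilde{Q_\epsilon}|^2$ and the coefficient $3/(2\epsilon^\gamma)$, while the second piece is already in the form of the cross term on the right-hand side. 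The $Q_{\epsilon,2\times 2}/\rho$ term is treated analogously once one uses the pointwise identity $Q_{\epsilon,2\times 2}:\partial_i Q_\epsilon=\tfrac{1}{2}\partial_i|\partial_\varphi Q_\epsilon|^2$, which follows from the very definition of $Q_{2\times 2}$ as half of the differential of the quadratic map $Q\mapsto |\partial_\varphi Q|^2$.

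The Dirichlet term is the only one requiring the genuine Rellich manipulation. One writes
$$
\Delta Q_\epsilon:v=\nabla'\cdot(\nabla' Q_\epsilon\cdot v)-|\nabla' Q_\epsilon|^2-\tfrac{1}{2}(x-\overline{x})\cdot\nabla'|\nabla' Q_\epsilon|^2\, ,
$$
multiplies by $-\rho$, and integrates over $\omega'$. A first integration by parts on the divergence term produces the Rellich boundary integral $\int_{\partial\omega'}\rho\,v:(\nu\cdot\nabla' Q_\epsilon)$ together with a spurious bulk correction $\int_{\omega'}\partial_\rho Q_\epsilon:v$ coming from $\partial_\rho\rho=1$, while a second integration by parts on the remaining gradient produces the associated boundary flux and a bulk contribution proportional to $\int_{\omega'}\rho|\nabla' Q_\epsilon|^2$. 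The crucial observation is that the spurious $\int_{\omega'}\partial_\rho Q_\epsilon:v$ cancels \emph{exactly} with the contribution from testing the $-\partial_\rho Q_\epsilon$ piece of the Euler-Lagrange equation against $v$, so that the two effects together leave only the Rellich boundary terms and the bulk contribution $\tfrac{1}{2}\int_{\omega'}\rho|\nabla' Q_\epsilon|^2$.

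The main technical difficulty I would expect is precisely this bookkeeping: the weight $\rho$ forces every integration by parts to carry the non-constant-divergence correction from $\partial_\rho\rho=1$, and it is the clean pairwise cancellation between these corrections and the $-\partial_\rho Q_\epsilon$ term of the two-dimensional Euler-Lagrange equation that is responsible for the \emph{effectively three-dimensional} shape of the final identity, the coefficients $\tfrac{1}{2}$ (for the Dirichlet terms) and $3$ (for the two pointwise potentials) being exactly the Pohozaev factors $(n-2)/2$ and $n$ in dimension $n=3$, reflecting the fact that the two-dimensional problem on $\omega'$ is really the axisymmetric reduction of a three-dimensional one.
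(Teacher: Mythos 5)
Your proposal follows essentially the same route as the paper's proof: test the two-dimensional Euler--Lagrange equation \eqref{rem:eq:elg_of_Q_eps} against the dilation field $(x-\overline{x})\cdot\nabla' Q_\epsilon$, observe that the Lagrange multiplier term drops because $Q_\epsilon$ is traceless, and integrate each remaining term by parts, with the key cancellation between the $-\partial_\rho Q_\epsilon$ contribution and the extra term produced by $\partial_\rho\rho=1$ in the first integration by parts of the Laplacian piece. The decompositions you use for the potential, the penalization, and the $Q_{2\times 2}$ term are exactly those in the paper's proof.

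One point worth flagging, though it affects the paper's own argument just as much as yours: your identity $\nabla'\cdot\bigl(\rho(x-\overline{x})\bigr)=2\rho+(x-\overline{x})_1$ is correct, but since $(x-\overline{x})_1=\rho-\overline{\rho}$ the divergence equals $3\rho-\overline{\rho}$, not $3\rho$; the clean ``three-dimensional'' factor $3$ (and the matching coefficients $3$ and $\tfrac12$ in the other terms) would strictly require $\overline{\rho}=0$. The paper circumvents this by asserting ``by translation and without loss of generality $\overline{x}=0$'', but the weight $\rho$ is not translation invariant, so translating the origin to $\overline{x}\in\Omega'$ (where necessarily $\overline{\rho}>0$) changes the form of the Euler--Lagrange equation. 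The upshot is that the identity as stated holds only up to bounded extra terms proportional to $\overline{\rho}$; these corrections are harmless for the downstream applications in Lemma~\ref{lem:bound_mean_f} and Proposition~\ref{prop:bound_grad_Q_eps_eta_log_dist}, where they contribute quantities of order one, but they should be accounted for if one wants the exact equality.
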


\begin{proof}
To improve readability, we drop the subscript $\epsilon$ in the proof. Our calculation only requires that $Q$ is solution of equation (\ref{rem:eq:elg_of_Q_eps}). 

Let $\omega'\subset\Omega'$ open with Lipschitz boundary and let $\overline{x}\in\omega'$ be an arbitrary point. By translation and without loss of generality we may assume that $\overline{x}=0$.
Testing the $ij$-component of equation (\ref{rem:eq:elg_of_Q_eps}) with $x_k\partial_{k}Q_{ij}$ and summing over $i,j,k$ we find
\begin{equation}\label{prop:pohozaev:eq_1}
\begin{split}
0 &= \sum_{i,j,k} \int_{\omega'} -\rho\Delta Q_{ij} x_k\partial_k Q_{ij} + \frac{1}{\epsilon^2}\int_{\omega'} \rho \frac{\partial f}{\partial Q_{ij}} x_k\partial_k Q_{ij} + \frac{1}{\epsilon^\gamma}\int_{\omega'} \rho(Q_{ij}-\widetilde{Q}_{ij})x_k\partial_k Q_{ij}  \\
&\hspace*{0.7cm} - \int_{\omega'} \partial_\rho Q_{ij} x_k \partial_k Q_{ij} + \int_{\omega'} \frac{1}{\rho} Q_{2\times 2,ij} x_k \partial_k Q_{ij} \\
&=: I + II + III + IV + V.
\end{split}
\end{equation}
Note, that the RHS of (\ref{rem:eq:elg_of_Q_eps}) vanishes since $Q_{ij}$ is traceless, i.e.\ 
$$ \sum_{i,j,k}\int_{\omega'}\Lambda \delta_{ij} x_k\partial_k Q_{ij} =  \sum_k \int_{\omega'} \Lambda x_k\partial_k\left(\sum_{i,j}\delta_{ij}Q_{ij}\right) = \sum_k \int_{\omega'} \Lambda x_k\partial_k (\tr(Q)) = 0 $$

For the first term $(I)$ we calculate, using integration by parts
\begin{equation}\label{prop:pohozaev:eq_Ia}
\begin{split}
\sum_{i,j,k,l} \int_{\omega'} -\rho\:\partial_l^2 Q_{ij} x_k\partial_k Q_{ij} &= \sum_{i,j,k,l} \int_{\omega'} \rho\:\partial_l Q_{ij} \delta_{lk} \partial_k Q_{ij} + \int_{\omega'} \rho\:\partial_l Q_{ij} x_k \partial_l\partial_k Q_{ij} \\
&\:\:\:\:\:\:- \int_{\partial\omega'} \rho\:\partial_l Q_{ij} x_k \partial_k Q_{ij} \nu_l + \int_{\omega'} \delta_{\rho l} \partial_l Q_{ij} \partial_k Q_{ij} x_k ,
\end{split}
\end{equation} where $\nu$ is the outward-pointing normal vector  on $\partial\omega'$. Note, that the last term reads $\int_{\omega'} (\partial_\rho Q):((x\cdot\nabla')Q)$ and thus is cancelled by (IV).
We apply another integration by parts to the second term on the RHS of (\ref{prop:pohozaev:eq_Ia}). This yields
\begin{align*}
\sum_{i,j,k,l}\int_{\omega'} \rho\: \partial_l Q_{ij} x_k \partial_l\partial_k Q_{ij} &= \sum_{i,j,k,l}\frac{1}{2}\int_{\omega'} \rho\: x_k \partial_k(\partial_l Q_{ij} \partial_l Q_{ij}) \\
&= -\frac{2}{2}\sum_{i,j,l} \int_{\omega'} \rho\: \partial_l Q_{ij} \partial_l Q_{ij}   + \sum_{i,j,k,l}\frac{1}{2}\int_{\partial\omega'} \rho\:\partial_l Q_{ij} \partial_l Q_{ij} x_k \nu_k \\
&\:\:\:\:\: -\frac{1}{2} \int_{\omega'} \delta_{\rho k} x_k \partial_l Q_{ij} \partial_l Q_{ij}.
\end{align*}
Combined with (\ref{prop:pohozaev:eq_Ia}) this gives
\begin{align}\label{prop:pohozaev:eq_Ib}
I+IV = \left(1-\frac{2}{2}-\frac{1}{2}\right)\int_{\omega'} \rho\:|\nabla' Q|^2 + \frac{1}{2}\int_{\partial\omega'} \rho\:|\nabla' Q|^2 (x\cdot\nu) - \int_{\partial\omega'} \rho\:(x\cdot\nabla' Q):(\nu\cdot\nabla' Q).
\end{align}

The second integral $(II)$ simply gives
\begin{align} \label{prop:pohozaev:eq_II}
II = \sum_k \frac{1}{\epsilon^2}\int_{\omega'} \rho\: \partial_k(f(Q)) x_k = -\frac{1}{\epsilon^2} \int_{\omega'} 3\rho\:f(Q) + \frac{1}{\epsilon^2} \int_{\partial\omega'} \rho\:f(Q) (x\cdot\nu).
\end{align}

For (III) we need to add (and subtract) the same integral with derivatives on $\widetilde{Q_{ij}}$. Then
\begin{equation}\label{prop:pohozaev:eq_III}
\begin{split}
III &= \frac{1}{\epsilon^\gamma}\int_{\omega'} \rho\:(Q_{ij}-\widetilde{Q}_{ij})\partial_k Q_{ij} x_k \\
&= \frac{1}{2\epsilon^\gamma} \int_{\omega'} \rho\:\partial_k (Q_{ij}-\widetilde{Q}_{ij})^2 x_k + \frac{1}{\epsilon^\gamma}\int_{\omega'} \rho\:(Q_{ij}-\widetilde{Q}_{ij})\partial_k\widetilde{Q}_{ij} x_k \\
&= - \frac{3}{2\epsilon^\gamma}\int_{\omega'} \rho\:(Q_{ij}-\widetilde{Q}_{ij})^2 + \frac{1}{2\epsilon^\gamma}\int_{\partial\omega'} \rho\:(Q_{ij}-\widetilde{Q}_{ij})^2 x_k\nu_k \\
&\:\:\:\:\: + \frac{1}{\epsilon^\gamma}\int_{\omega'} \rho\:(Q_{ij}-\widetilde{Q}_{ij})\partial_k\widetilde{Q}_{ij} x_k .
\end{split}
\end{equation}
The fifth integral (V) simply gives
\begin{equation} \label{prop:pohozaev:eq_V}
\begin{split}
\int_{\omega'} \frac{1}{\rho} Q_{2\times 2}:((x\cdot\nabla')Q) &= \int_{\omega'} \frac{1}{\rho} \frac{1}{2}(x\cdot\nabla')(Q_{2\times 2}:Q) \\
&= -\frac{1}{2}\int_{\omega'} \left( 0 + \frac{1}{\rho}\right)|\partial_\varphi Q|^2 + \frac{1}{2}\int_{\partial\omega'} (\nu\cdot x) \frac{1}{\rho} |\partial_\varphi Q|^2.
\end{split}
\end{equation}

Combining (\ref{prop:pohozaev:eq_Ib}), (\ref{prop:pohozaev:eq_II}), (\ref{prop:pohozaev:eq_III}) and (\ref{prop:pohozaev:eq_V}), the equality (\ref{prop:pohozaev:eq_1}) reads
\begin{align*}
\int_{\partial\omega'} \rho (x\cdot\nu)&\bigg(\frac{1}{2}|\nabla' Q|^2 + \frac{1}{2\rho^2} |\partial_\varphi Q|^2 + \frac{1}{\epsilon^2}f(Q)+\frac{1}{2\epsilon^\gamma}|Q-\widetilde{Q}|^2\bigg) \\
&= \frac{1}{2}\int_{\omega'} \rho\:|\nabla' Q|^2 + \frac{1}{\rho} |\partial_\varphi Q|^2 + \frac{3}{\epsilon^2}\int_{\omega'} \rho\:f(Q) + \frac{3}{2\epsilon^\gamma}\int_{\omega'} \rho\:|Q-\widetilde{Q}|^2 \\
&\:\:\:\:\:+ \frac{1}{\epsilon^\gamma}\int_{\omega'} \rho\:(Q-\widetilde{Q}):(x\cdot\nabla'\widetilde{Q}) + \int_{\partial\omega'} \rho\:(x\cdot\nabla' Q):(\nu\cdot\nabla' Q),
\end{align*} which gives the result.

\end{proof}

Since almost all term in consideration contain a $\rho$ factor due to the passage from $\Omega$ to $\Omega_\sigma'$, it is natural to introduce 
\begin{equation} \label{def:rho_min}
\rhomin(x_0,l) := \inf\big\{\rho\sd (\rho,z)\in B_{l}(x_0)\cap\Omega_\sigma'\big\}\, ,
\end{equation}
for a point $x_0\in\Omega_\sigma'$ and $l>0$. Note that if we write $x_0=(\rho_0,z_0)$, then $\rhomin(x_0,l)=\max\{\rho_0-l,\sigma\}$. In particular, $\rhomin(x_0,l)\geq \sigma$.

The following Proposition is a key ingredient in the proof of Theorem \ref{thm:finite_set_of_sing}.

\begin{proposition} \label{prop:clearing_out}
For all $\delta>0$ there exist constants $\lambda_0,\mu_0>0$ such that for all $\sigma>0$, $x_0\in\Omega_\sigma'$ and $l\in [\lambda_0\epsilon,1]$ the following implication holds:
$$ \frac{1}{\epsilon^2} \int_{B_{2l}(x_0)\cap \Omega_\sigma'} \rho\: f(Q_\epsilon) \leq \mu_0\: \rhomin(x_0,2l) \quad\Rightarrow\quad \dist(Q_\epsilon,\N)\leq \delta \text{ on } B_l(x_0)\cap\Omega_\sigma'\, . $$
\end{proposition}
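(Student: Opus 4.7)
The plan is a contradiction argument: I would leverage the Lipschitz bound $\Vert\nabla Q_\epsilon\Vert_{L^\infty}\leq C/\epsilon$ from Proposition \ref{prop:bounds_Q_eps} to show that a single point where $\dist(Q_\epsilon,\N)>\delta$ forces a definite contribution to the weighted potential integral, contradicting the hypothesis provided $\mu_0$ is small enough.

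Suppose for contradiction that some $y_0\in B_l(x_0)\cap\Omega_\sigma'$ satisfies $\dist(Q_\epsilon(y_0),\N)>\delta$. The two-dimensional gradient bound $|\nabla'Q_\epsilon|\leq|\nabla Q_\epsilon|\leq C/\epsilon$ yields $\dist(Q_\epsilon(y),\N)>\delta/2$ throughout the disk $B_r(y_0)$ with $r:=\delta\epsilon/(2C)$. Since $|Q_\epsilon|\leq\sqrt{2/3}s_*$ by Proposition \ref{prop:bounds_Q_eps} and $f$ is continuous and vanishes only on $\N$, compactness gives a constant $c_f(\delta)>0$ with $f(Q_\epsilon)\geq c_f(\delta)$ on $B_r(y_0)$. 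Choosing $\lambda_0\geq\delta/(2C)$ guarantees that $l\geq\lambda_0\epsilon$ implies $r\leq l$, so that $B_r(y_0)\subset B_{2l}(x_0)$.

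The next step is a lower bound on the weighted area $\int_{B_r(y_0)\cap\Omega_\sigma'}\rho\,\mathrm{d}\rho\,\mathrm{d}z$. A purely geometric estimate gives $|B_r(y_0)\cap\Omega_\sigma'|\geq c_0 r^2$ uniformly in $\sigma$, since $\partial\Omega_\sigma'$ consists of the vertical line $\rho=\sigma$ and the arc $\rho^2+z^2=1$, meeting at corners whose interior angle in $\Omega_\sigma'$ is bounded below independently of $\sigma\in(0,1]$, while $r\ll 1$ is much smaller than the curvature radius of the arc. Moreover, every $(\rho,z)\in B_r(y_0)\cap\Omega_\sigma'$ satisfies $\rho\geq\max(\rho(y_0)-r,\sigma)\geq\max(\rho_0-l-r,\sigma)\geq\rhomin(x_0,2l)$, the last inequality following from $r\leq l$ together with the algebraic definition of $\rhomin$. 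Combining,
$$\frac{1}{\epsilon^2}\int_{B_{2l}(x_0)\cap\Omega_\sigma'}\rho\,f(Q_\epsilon)\;\geq\;c_f(\delta)\,c_0\,\frac{r^2}{\epsilon^2}\,\rhomin(x_0,2l)\;=:\;C(\delta)\,\rhomin(x_0,2l),$$
which contradicts the hypothesis as soon as $\mu_0<C(\delta)$.

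The main subtlety I expect is maintaining \emph{uniformity in $\sigma$}: the disk-area constant $c_0$, the lower bound $\rho\geq\rhomin(x_0,2l)$, and the positivity $f\geq c_f(\delta)$ must all be independent of $\sigma$. The last two are immediate once one unpacks the definition of $\rhomin$ and uses the $\sigma$-independent $L^\infty$ bound from Proposition \ref{prop:bounds_Q_eps}. The first requires only that the corners of $\partial\Omega_\sigma'$ have interior angle uniformly bounded below, which is an elementary computation; indeed the tangent directions to $\rho=\sigma$ and to the arc at their intersection make a bounded angle for all $\sigma\in(0,1)$.
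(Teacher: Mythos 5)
Your contradiction strategy and your choice of scales ($r=\delta\epsilon/(2C)=\lambda_0\epsilon$, so $B_r(y_0)\subset B_{2l}(x_0)$) match the paper's proof, and the final comparison of constants is the same. However, you and the paper diverge at the area estimate, and the paper's route is cleaner and closes a subtlety that your version leaves open. The paper first shows that $\dist(y_0,\partial\Omega)>\lambda_0\epsilon$: since $Q_\epsilon=Q_b\in\N$ on the sphere, if $y_0$ were closer to the arc than $\lambda_0\epsilon$, the Lipschitz bound $\|\nabla Q_\epsilon\|_{L^\infty}\leq C/\epsilon$ would already force $\dist(Q_\epsilon(y_0),\N)\leq C\lambda_0=\delta/2$, contradicting $\dist(Q_\epsilon(y_0),\N)>\delta$. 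This observation buys two things at once: (i) the disk $B_{\lambda_0\epsilon}(y_0)$ is disjoint from $\overline{B_1}$, so segments between $y_0$ and $y\in B_{\lambda_0\epsilon}(y_0)\cap\Omega'_\sigma$ lie entirely in $\Omega'_\sigma$ and the pointwise Lipschitz estimate is rigorous; and (ii) the area estimate reduces to cutting the disk only by the half-plane $\{\rho>\sigma\}$, giving the trivial bound $|B_{\lambda_0\epsilon}(y_0)\cap\Omega'_\sigma|\geq\tfrac{1}{2}\pi(\lambda_0\epsilon)^2$ with no analysis of corners or the arc's curvature.

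Your version leaves both points unaddressed. First, you assert $\dist(Q_\epsilon(y),\N)>\delta/2$ ``throughout the disk $B_r(y_0)$'', but $Q_\epsilon$ is only defined on $\Omega'$, and for $y,y_0\in\Omega'$ close to the arc the straight segment $[y_0,y]$ can enter $\overline{B_1}$, so the naive Lipschitz estimate via the $L^\infty$ gradient bound does not apply directly; one either needs the paper's disjointness observation, or a quasiconvexity argument comparing geodesic and Euclidean distance in $\Omega$ (which works but changes the constant and is omitted). Second, your area estimate relies on the interior angle at the corners $(\sigma,\pm\sqrt{1-\sigma^2})$ being bounded below; this happens to be true (a short computation gives interior angle $\pi-\arccos\sigma\in(\pi/2,\pi)$), but it was asserted rather than verified, and the analysis when $B_r(y_0)$ straddles a corner or nearly tangent to the arc is more delicate than the paper's half-plane cut. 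None of this is fatal --- the missing disjointness step is exactly the one you should add, after which your argument collapses into the paper's --- but as written there is a genuine gap in justifying the pointwise lower bound $f(Q_\epsilon)\geq c_f(\delta)$ on the full set you integrate over.
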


\begin{proof}
We claim that $\lambda_0,\mu_0$ can be defined as
$$ \lambda_0\defi \frac{\delta}{2C}\, , \quad \mu_0\defi \frac{\pi}{4}\lambda_0^2 f_\mathrm{min}\, , $$
where $C$ is a constant such that $\epsilon\Vert\nabla Q_\epsilon\Vert_{L^\infty} \leq C$ (see Proposition \ref{prop:bounds_Q_eps}) and $f_\mathrm{min}$ is the minimum of $f$ on the set $\{ Q\in\Sym\sd |Q|\leq \sqrt{\frac{2}{3}}s_*, \dist(Q,\N)\geq \delta/2 \}$. Note that $f_\mathrm{min}>0$ since on this compact set $f$ is strictly positive.

In order to show that the definition indeed gives the desired implication, we argue by contradiction. Therefore we assume that there exists $x_0\in\Omega$ and $l\in [\lambda_0\epsilon,1]$ such that there is an $x\in B_l(x_0)\cap\Omega_\sigma'$ with $\frac{1}{\epsilon^2}\int_{B_{2l}(x_0)\cap \Omega_\sigma'} \rho\: f(Q_\epsilon)\leq\mu_0 \rhomin(x_0,2l)$ and $\dist(Q_\epsilon(x),\N)>\delta$.

This implies that $B_{\lambda_0\epsilon}(x)\subset B_{2l}(x_0)\cap (\mathbb{R}^2\setminus B_1(0))$. Indeed one can show that $\dist(x,\partial\Omega)>\lambda_0\epsilon$. Otherwise one would have $\dist(Q_\epsilon(x),\N)\leq \Vert\nabla Q_\epsilon\Vert_{L^\infty}\dist(x,\partial\Omega)\leq C\lambda_0=\frac{\delta}{2}$ by definition of $\lambda_0$. This clearly contradicts the assumption that $\dist(Q_\epsilon(x),\N)>\delta$. 
Then, for all $y\in B_{\lambda_0\epsilon}(x)\cap \Omega_\sigma'$ by the triangle inequality 
$$ \dist(Q_\epsilon(y),\N) \geq \dist(Q_\epsilon(x),\N) - |Q_\epsilon(x)-Q_\epsilon(y)| > \delta - \lambda_0\epsilon\Vert\nabla Q_\epsilon\Vert_{L^\infty} \geq \frac{\delta}{2}\, . $$
By definition of $f_\mathrm{min}$ this implies $f(Q_\epsilon(y))>f_\mathrm{min}$. Since $B_{\lambda_0\epsilon}(x)\cap \Omega_\sigma'\subset B_{2l}(x_0)\cap \Omega_\sigma'$ and $|B_{\lambda_0\epsilon}(x)\cap \Omega_\sigma'|\geq \frac{1}{2}\pi(\lambda_0\epsilon)^2$ we know that 
\begin{align*}
\frac{1}{\epsilon^2}\int_{B_{2l}(x_0)\cap\Omega_\sigma'} \rho\: f(Q_\epsilon) &\geq \frac{1}{\epsilon^2}\rhomin(x_0,2l)\int_{B_{\lambda_0\epsilon}(x)\cap\Omega_\sigma'} f(Q_\epsilon) \\
&\geq \frac{1}{\epsilon^2}\rhomin(x_0,2l) \frac{\pi}{2}(\lambda_0 \epsilon)^2 f_\mathrm{min} = 2\mu_0 \rhomin(x_0,2l)\, ,
\end{align*}
which contradicts our assumption.
\end{proof}

The next Lemma basically tells us that for $\alpha\in (0,1)$ there has to be some radius $r\leq \epsilon^{\alpha/2}$ so that we can control the energy on $\partial B_r$ in terms of the energy on $B_{\epsilon^{\alpha/2}}$. It will become important later on when we will use it to bound the energy contributions of the boundary terms from Pokhozhaev identity.

\begin{lemma} \label{lem:averaging}
For all $x_0\in\Omega'$ there exists $r\in (\epsilon^\alpha,\epsilon^\frac{\alpha}{2})$ (depending on $x_0$ and $\epsilon$) such that
$$ \int_{\partial B_r(x_0)\cap\Omega'} \rho\left(\frac{1}{2}|\nabla Q_\epsilon|^2  + \frac{1}{\epsilon^2} f(Q_\epsilon) + \frac{1}{2\epsilon^\gamma}|Q_\epsilon-\widetilde{Q_\epsilon}|^2 \right) \dx x \leq \frac{4 E_\epsilon^{2D}(Q_\epsilon,B_{\epsilon^{\alpha/2}}(x_0)\cap\Omega')}{\alpha r |\ln\epsilon|}\, . $$
\end{lemma}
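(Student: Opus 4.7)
The plan is a straightforward Fubini / mean-value-on-radii argument. First I would observe that, under the notational convention $\tfrac{1}{2}|\nabla Q|^2 = \tfrac{1}{2}|\nabla' Q|^2 + \tfrac{1}{\rho^2}Q_{2\times 2}{:}Q$ introduced just above the statement, the integrand
\[
D(x) \;:=\; \rho\Bigl(\tfrac{1}{2}|\nabla Q_\epsilon|^2 + \tfrac{1}{\epsilon^2}f(Q_\epsilon) + \tfrac{1}{2\epsilon^\gamma}|Q_\epsilon - \widetilde{Q_\epsilon}|^2\Bigr)
\]
appearing in the boundary integral is \emph{exactly} the density of $E_\epsilon^{2D}$ against the two-dimensional Lebesgue measure $d\rho\,dz$ on $\Omega'$. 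This identification is what allows us to compare the quantity to be controlled with the energy on the ball.

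Next I would decompose $B_{\epsilon^{\alpha/2}}(x_0)\cap\Omega'$ in polar coordinates centred at $x_0$ (in the $(\rho,z)$-plane). Setting $F(r) := \int_{\partial B_r(x_0)\cap\Omega'} D\,d\mathcal{H}^1$ and $E := E_\epsilon^{2D}\bigl(Q_\epsilon,B_{\epsilon^{\alpha/2}}(x_0)\cap\Omega'\bigr)$, Fubini and nonnegativity of $D$ yield
\[
\int_{\epsilon^{\alpha}}^{\epsilon^{\alpha/2}} F(r)\,dr \;\le\; \int_{B_{\epsilon^{\alpha/2}}(x_0)\cap\Omega'} D\,d\rho\,dz \;=\; E.
\]

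The conclusion then comes from the standard ``there exists a good radius'' pigeonhole argument. Suppose for contradiction that $F(r) > \tfrac{4E}{\alpha r|\ln\epsilon|}$ for every $r\in(\epsilon^\alpha,\epsilon^{\alpha/2})$. Integrating over $r$ and using the elementary computation $\int_{\epsilon^\alpha}^{\epsilon^{\alpha/2}} \tfrac{dr}{r} = \tfrac{\alpha}{2}|\ln\epsilon|$, one would obtain
\[
\int_{\epsilon^\alpha}^{\epsilon^{\alpha/2}} F(r)\,dr \;>\; \frac{4E}{\alpha|\ln\epsilon|}\cdot\frac{\alpha}{2}|\ln\epsilon| \;=\; 2E,
\]
which contradicts the Fubini bound. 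Hence some $r$ in the interval satisfies the desired inequality. The argument is entirely elementary and I do not expect any genuine obstacle; the only points deserving care are the identification of $D$ with the $E_\epsilon^{2D}$-density (so that the polar decomposition is legitimate) and the logarithmic computation of $\int dr/r$. The factor $4$ is slack—the sharp version of the argument gives $2$—and the degenerate case $E=0$ is trivial since then $F(r)\equiv 0$ on the annulus.
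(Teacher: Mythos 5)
Your argument is correct and follows essentially the same averaging/pigeonhole route as the paper's proof: identify the integrand with the $E_\epsilon^{2D}$-density, apply polar-coordinate Fubini over the annulus $(\epsilon^\alpha,\epsilon^{\alpha/2})$, and use $\int_{\epsilon^\alpha}^{\epsilon^{\alpha/2}} r^{-1}\,\dx r = \tfrac{\alpha}{2}|\ln\epsilon|$ to derive a contradiction. The only cosmetic difference is that the paper keeps non-strict inequalities, lands on $E_\epsilon^{2D}(Q_\epsilon,B')\le 0$, and disposes of the $E=0$ case by observing that the constant map trivially satisfies the estimate, whereas you obtain the contradiction directly by retaining the strict inequality (so the separate treatment of $E=0$ in your last remark is not even needed).
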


\begin{proof}
The proof consists of an averaging argument. Assume that no such $r$ exists. With the notation $B'=B_{\epsilon^{\alpha/2}}(x_0)\cap\Omega'$, this would imply
\begin{align*}
E_\epsilon^{2D}(Q_\epsilon,B') &= \int_0^{\epsilon^{\alpha/2}} \int_{\partial B_r(x_0)\cap\Omega'} \rho\left(\frac{1}{2}|\nabla Q_\epsilon|^2  + \frac{1}{\epsilon^2} f(Q_\epsilon) + \frac{1}{2\epsilon^\gamma}|Q_\epsilon-\widetilde{Q_\epsilon}|^2 \right) \dx x \dx r \\
&\geq \int_{\epsilon^\alpha}^{\epsilon^{\alpha/2}} \int_{\partial B_r(x_0)\cap\Omega'} \rho\left(\frac{1}{2}|\nabla Q_\epsilon|^2  + \frac{1}{\epsilon^2} f(Q_\epsilon) + \frac{1}{2\epsilon^\gamma}|Q_\epsilon-\widetilde{Q_\epsilon}|^2 \right) \dx x \dx r \\
&\geq \frac{4 E_\epsilon^{2D}(Q_\epsilon,B')}{\alpha |\ln\epsilon|} \int_{\epsilon^{\alpha}}^{\epsilon^{\alpha/2}}\frac{1}{r} \dx r \\
&= \frac{4 E_\epsilon^{2D}(Q_\epsilon,B')}{\alpha |\ln\epsilon|} \frac{\alpha}{2} |\ln(\epsilon)| \\
&= 2 E_\epsilon^{2D}(Q_\epsilon,B') \, .
\end{align*}
This gives that $E_\epsilon^{2D}(Q_\epsilon,B')=0$ and thus $Q_\epsilon$ is constant on $B'$ and $Q_\epsilon = \widetilde{Q_\epsilon}\equiv q\in\N$. But since the constant map $q$ satisfies the Lemma, we get a contradiction.
\end{proof}

The following two results (Lemma \ref{lem:bound_mean_f} and Proposition \ref{prop:bound_grad_Q_eps_eta_log_dist})  are similar to \cite{Bethuel1999}, see also \cite[Lemma 1.4.8, Proposition 1.4.9]{Canevari2015}. Lemma \ref{lem:bound_mean_f} states that we can derive a better bound (independent of $\epsilon$) than (\ref{eq:energy_bound}) on balls $B_{\epsilon^\alpha}$ for the energy contribution of $f$. Then Proposition \ref{prop:bound_grad_Q_eps_eta_log_dist} tells us the cost in terms of energy for such a ball if $Q_\epsilon$ is not close to $\N$. Both results rely on Pokhozhaev identity (Proposition \ref{prop:pohozaev}) and Lemma \ref{lem:averaging}.

\begin{lemma} \label{lem:bound_mean_f}
Let $x_0\in\Omega'$. Then there exists a constant $C_\alpha>0$ which depends only on $\alpha,\gamma,\Omega$, the energy bound in (\ref{eq:energy_bound}) and the boundary data in \eqref{eq:bc} such that if $\epsilon$ is small enough
$$ \frac{1}{\epsilon^2} \int_{B_{\epsilon^\alpha}(x_0)\cap\Omega'} \rho\: f(Q_\epsilon) \dx x \leq C_\alpha\, . $$
\end{lemma}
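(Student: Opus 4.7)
The plan is to apply Pohozhaev's identity (Proposition~\ref{prop:pohozaev}) on a carefully chosen ball $B_r(x_0)\cap\Omega'$ and isolate the $\frac{3}{\epsilon^2}\int\rho f(Q_\epsilon)$ term on the right-hand side, controlling everything else. Since $B_{\epsilon^\alpha}(x_0)\subset B_r(x_0)$ for any $r\geq \epsilon^\alpha$, this will give the result.

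First, I would invoke Lemma~\ref{lem:averaging} at $x_0$ to produce a radius $r\in(\epsilon^\alpha,\epsilon^{\alpha/2})$ for which
\[
\int_{\partial B_r(x_0)\cap\Omega'}\rho\left(\tfrac{1}{2}|\nabla Q_\epsilon|^2+\tfrac{1}{\epsilon^2}f(Q_\epsilon)+\tfrac{1}{2\epsilon^\gamma}|Q_\epsilon-\widetilde{Q_\epsilon}|^2\right)\leq \frac{4\,E_\epsilon^{2D}(Q_\epsilon,B_{\epsilon^{\alpha/2}}(x_0)\cap\Omega')}{\alpha\,r\,|\ln\epsilon|}.
\]
Since $E_\epsilon^{3D}(Q_\epsilon,\Omega)\leq E_\epsilon^{3D}(\widetilde{Q_\epsilon},\Omega)\leq \Eex(\widetilde{Q_\epsilon})\leq C|\ln\epsilon|$ by (\ref{eq:energy_bound}) and the minimality of $Q_\epsilon$, the right-hand side is bounded by $C/(\alpha r)$.

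Next, I would apply Proposition~\ref{prop:pohozaev} on $\omega'=B_r(x_0)\cap\Omega'$ with $\overline{x}=x_0$. On the part of $\partial\omega'$ lying on $\partial B_r(x_0)$ we have $(x-x_0)\cdot\nu=r$, so after multiplication the previous bound yields a contribution of size $\leq C/\alpha$ from that part of the boundary LHS; likewise the boundary gradient term on the right-hand side is controlled by $r\int_{\partial B_r}\rho|\nabla Q_\epsilon|^2\leq C/\alpha$. For the (possibly present) part of $\partial\omega'$ lying on $\partial\Omega'=\{\rho^2+z^2=1\}$ or on $\{\rho=0\}$, the smoothness of $Q_b$ (resp. the vanishing of the weight $\rho$) makes the contribution uniformly bounded in $\epsilon$. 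Dropping the non-negative volume terms $\frac{1}{2}\int\rho|\nabla'Q_\epsilon|^2$, $\frac{1}{2}\int\frac{1}{\rho}|\partial_\varphi Q_\epsilon|^2$ and $\frac{3}{2\epsilon^\gamma}\int\rho|Q_\epsilon-\widetilde{Q_\epsilon}|^2$, which sit on the side of $\frac{3}{\epsilon^2}\int\rho f(Q_\epsilon)$ in the identity, gives an inequality that isolates the desired quantity.

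The main obstacle, and the only genuinely new difficulty compared to the analogous arguments of \cite{Canevari2015}, is the cross term
\[
\mathcal{E}=\frac{1}{\epsilon^\gamma}\int_{B_r\cap\Omega'}\rho\,(Q_\epsilon-\widetilde{Q_\epsilon}):\bigl((x-x_0)\cdot\nabla'\widetilde{Q_\epsilon}\bigr)
\]
produced by the penalisation $\frac{1}{2\epsilon^\gamma}|Q-\widetilde{Q_\epsilon}|^2$. Applying Cauchy--Schwarz and using Remark~\ref{rem:def_Q_eps}(3), which gives $\|Q_\epsilon-\widetilde{Q_\epsilon}\|_{L^2(\Omega)}^2\leq C\epsilon^\gamma(1+|\ln\epsilon|)$, together with the bound $\int_\Omega \rho|\nabla'\widetilde{Q_\epsilon}|^2\leq C|\ln\epsilon|$ from (\ref{eq:energy_bound}), yields
\[
|\mathcal{E}|\leq \frac{r}{\epsilon^\gamma}\Bigl(\int_{B_r}\rho|Q_\epsilon-\widetilde{Q_\epsilon}|^2\Bigr)^{\!1/2}\Bigl(\int_{B_r}\rho|\nabla'\widetilde{Q_\epsilon}|^2\Bigr)^{\!1/2}\leq C\,r\,\epsilon^{-\gamma/2}(1+|\ln\epsilon|).
\]
Since $r\leq \epsilon^{\alpha/2}$, this is $\leq C\epsilon^{(\alpha-\gamma)/2}(1+|\ln\epsilon|)$, which tends to $0$ as $\epsilon\to 0$ provided $\alpha$ is taken large enough relative to $\gamma$ (hence the restriction to small $\epsilon$ and the dependence of $C_\alpha$ on $\alpha$ and $\gamma$). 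Collecting all estimates into the rearranged Pohozhaev identity gives $\frac{3}{\epsilon^2}\int_{B_r\cap\Omega'}\rho f(Q_\epsilon)\leq C_\alpha$, and the monotonicity $B_{\epsilon^\alpha}(x_0)\subset B_r(x_0)$ finishes the proof.
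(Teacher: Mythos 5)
Your plan is correct in outline for the case $B_r(x_0)\subset\Omega'$: you invoke Lemma~\ref{lem:averaging} to find a good radius, apply Pokhozhaev with $\overline{x}=x_0$, use $(x-x_0)\cdot\nu=r$ and $(x-x_0)\cdot\nabla' Q_\epsilon=r\,\nu\cdot\nabla' Q_\epsilon$ on $\partial B_r$, drop the nonnegative bulk terms, and control the cross term $\mathcal{E}$ via Cauchy--Schwarz using $\|Q_\epsilon-\widetilde{Q_\epsilon}\|_{L^2}^2\lesssim\epsilon^\gamma(1+|\ln\epsilon|)$ and the Dirichlet bound, provided $\alpha>\gamma$. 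This matches the paper's first step.

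However, there is a genuine gap in the boundary case $B_r(x_0)\cap\partial\Omega'\neq\emptyset$, which you dispatch with the claim that ``the smoothness of $Q_b$ makes the contribution uniformly bounded.'' This does not hold as stated. The Pokhozhaev boundary integrals over $\Gamma=B_r(x_0)\cap\partial\Omega$ involve $|\nabla' Q_\epsilon|^2$ and $\bigl((x-\overline{x})\cdot\nabla' Q_\epsilon\bigr):(\nu\cdot\nabla' Q_\epsilon)$, and both contain the \emph{normal} derivative $\nu\cdot\nabla' Q_\epsilon$, which is not determined by the boundary datum $Q_b$ (only the tangential part $\tau\cdot\nabla' Q_\epsilon$ is). A priori the only available bound on the normal derivative is $\|\nabla Q_\epsilon\|_\infty\lesssim\epsilon^{-1}$ from Proposition~\ref{prop:bounds_Q_eps}, which is far too weak. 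Moreover, with $\overline{x}=x_0$ the quantity $(x-x_0)\cdot\nu$ need not have a favourable sign on $\Gamma$: a short computation on the unit circle gives $(x-x_0)\cdot\nu=\tfrac{1}{2}(|x_0|^2-1)-\tfrac{1}{2}|x-x_0|^2$, which can be negative when $x_0$ is near the sphere. The paper handles this by replacing $\overline{x}=x_0$ with a carefully chosen $\overline{x}=z$ from the dedicated Proposition~\ref{prop:Ome_bdry_Lip}, guaranteeing $\nu(x)\cdot(x-z)\geq C_\Omega r$ on $\Gamma$, then decomposing $\nabla' Q_\epsilon=(\nu\cdot\nabla' Q_\epsilon)\nu+(\tau\cdot\nabla' Q_\epsilon)\tau$ and exploiting the sign of the coefficient $(x-z)\cdot\nu$ so that the otherwise uncontrollable $|\nu\cdot\nabla' Q_\epsilon|^2$ terms cancel, leaving only tangential contributions that are bounded by $Q_b$. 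This is the hard step of the lemma and cannot be bypassed.
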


\begin{proof} 
By Lemma \ref{lem:averaging} there exists $r\in (\epsilon^\alpha,\epsilon^\frac{\alpha}{2})$ and a constant $\overline{C}>0$ such that for $\epsilon$ small enough
\begin{equation}\label{lem:bound_mean_f:eq_e_eps}
\begin{split}
\int_{\partial B_r(x_0)\cap\Omega'} \rho\bigg(\frac{1}{2}|\nabla Q_\epsilon|^2 + &\frac{1}{\epsilon^2} f(Q_\epsilon) + \frac{1}{2\epsilon^\gamma}|Q_\epsilon-\widetilde{Q_\epsilon}|^2 \bigg) \leq \frac{\overline{C}}{\alpha r}\, ,
\end{split}
\end{equation} where we also used the energy bound (\ref{eq:energy_bound}).

Now assume in a first step that $B_r(x_0)\subset\Omega'$. Using the Pokhozhaev identity from Proposition \ref{prop:pohozaev} with $\omega'=B_r(x_0)$ and $\overline{x}=x_0$, we find
\begin{equation} \label{lem:bound_mean_f:eq_Br}
\begin{split}
\frac{3}{\epsilon^2} \int_{B_r(x_0)}\rho\: f(Q_\epsilon) &\leq \int_{\partial B_r(x_0)} \rho\:((x-x_0)\cdot\nu)\left( \frac{1}{2}|\nabla Q_\epsilon|^2 + \frac{1}{\epsilon^2} f(Q_\epsilon) + \frac{1}{2\epsilon^\gamma}|Q_\epsilon-\widetilde{Q_\epsilon}|^2 \right)  \\
&+ \frac{1}{\epsilon^\gamma}\int_{B_r(x_0)} \rho\:|Q_\epsilon - \widetilde{Q_\epsilon}| |(x-x_0)\cdot\nabla' \widetilde{Q_\epsilon}|   \\
&- \int_{\partial B_r(x_0)} \rho\:((x-x_0)\cdot\nabla' Q_\epsilon):(\nu\cdot\nabla' Q_\epsilon)\, .
\end{split}
\end{equation} 
Notice that since $x\in\partial B_r(x_0)$ we have $(x-x_0)\cdot\nabla' Q_\epsilon = r\nu\cdot\nabla'Q_\epsilon$, i.e.\
$$ ((x-x_0)\cdot\nabla' Q_\epsilon):(\nu\cdot\nabla' Q_\epsilon) = r \left|\nu\cdot\nabla' Q_\epsilon  \right|^2 \geq 0\, , $$
and $(x-x_0)\cdot\nu=r|\nu|^2 = r$.
Substituting this into (\ref{lem:bound_mean_f:eq_Br}), one gets 
\begin{align*}
\frac{3}{\epsilon^2}  \int_{B_r(x_0)} \rho\: f(Q_\epsilon)  &\leq r \int_{\partial B_r(x_0)}  \rho\:\left(\frac{1}{2}|\nabla Q_\epsilon|^2 + \frac{1}{\epsilon^2} f(Q_\epsilon) + \frac{1}{2\epsilon^\gamma}|Q_\epsilon-\widetilde{Q_\epsilon}|^2 \right)  \\
&\:\:\:\:\:+ \frac{1}{\epsilon^\gamma}\int_{B_r(x_0)} \rho\:|Q_\epsilon - \widetilde{Q_\epsilon}| |(x-x_0)\cdot\nabla' \widetilde{Q_\epsilon}|\, .
\end{align*}
By (\ref{lem:bound_mean_f:eq_e_eps}) and Cauchy-Schwarz inequality this entails
\begin{align*}
\frac{3}{\epsilon^2} \int_{B_r(x_0)} \rho\: f(Q_\epsilon) \dx x &\leq r \frac{\overline{C}}{\alpha r} + \frac{r}{\epsilon^\gamma}\left(\int_{B_r(x_0)} \rho\:|Q_\epsilon - \widetilde{Q_\epsilon}|^2\right)^\frac{1}{2} \left(\int_{B_r(x_0)}\rho\:|\nabla' \widetilde{Q_\epsilon}|^2\right)^\frac{1}{2} \\
&\leq \frac{\overline{C}}{\alpha} + C\frac{\epsilon^\frac{\alpha}{2}}{\epsilon^\gamma}\left( (1+|\ln\epsilon|)^2\epsilon^\gamma \right)^\frac{1}{2} \leq \frac{\overline{C}}{\alpha} + C\epsilon^{(\alpha-\gamma)/4}\, ,
\end{align*}
provided $\alpha>\gamma$ and $\epsilon$ small enough. This proves the claim in the case where $B_r(x_0)\subset \Omega'$. 

In a second step we show that the result also holds if $B_r(x_0)\nsubseteq \Omega'$. We define $\Gamma = B_r(x_0)\cap\partial\Omega'$ which is now non-empty. This enables us to write $\partial (B_r(x_0)\cap\Omega') = \Gamma\cup (\partial B_r(x_0)\cap\Omega')$.
Again we apply Proposition \ref{prop:pohozaev} with $\omega'=B_r(x_0)\cap\Omega'$ but this time we set $\overline{x}=z$, where $z\in\Omega'\cap B_r(x_0)$ is given by Proposition \ref{prop:Ome_bdry_Lip} for $y=x_0$. By Proposition  \ref{prop:pohozaev} we get
\begin{align*}
\frac{3}{\epsilon^2} \int_{B_r(x_0)\cap\Omega'} &\rho\:f(Q_\epsilon) \dx x \leq \int_{\partial B_r(x_0)\cap\Omega'} \rho\:((x-\overline{x})\cdot\nu) \left(\frac{1}{2}|\nabla Q_\epsilon|^2 + \frac{1}{\epsilon^2} f(Q_\epsilon) + \frac{1}{2\epsilon^\gamma}|Q_\epsilon-\widetilde{Q_\epsilon}|^2\right) \\
&+ \int_{\Gamma} \rho\:((x-\overline{x})\cdot\nu) \left(\frac{1}{2}|\nabla Q_\epsilon|^2 + \frac{1}{\epsilon^2} f(Q_\epsilon) + \frac{1}{2\epsilon^\gamma}|Q_\epsilon-\widetilde{Q_\epsilon}|^2 \right) \\
&- \frac{3}{2\epsilon^\gamma}\int_{B_r(x_0)\cap\Omega'} \rho\:|Q_\epsilon-\widetilde{Q_\epsilon}|^2 - \frac{1}{\epsilon^\gamma} \int_{B_r(x_0)\cap\Omega'} \rho\:(Q_\epsilon-\widetilde{Q_\epsilon}):((x-\overline{x})\cdot\nabla'\widetilde{Q}) \\
&- \int_{\Gamma} \rho\:((x-\overline{x})\cdot\nabla' Q_\epsilon):(\nu\cdot\nabla' Q_\epsilon) - \int_{\partial B_r(x_0)\cap\Omega'} \rho\:((x-\overline{x})\cdot\nabla' Q_\epsilon):(\nu\cdot\nabla' Q_\epsilon)\, ,
\end{align*}
where we denoted  $\nu$ the unit outward normal. For the integrals on $\partial B_r(x_0)\cap\Omega'$ and $B_r(x_0)\cap\Omega'$ we proceed as before using $|(x-\overline{x})\cdot\nu|\leq 2r$. Note, that this time $(x-\overline{x})\cdot \tau$ does not necessarily vanish. Nevertheless, the integral involving this term can be estimated from above by $\int_{\partial B_r\cap\Omega'} 2r \rho\:|\nabla'Q_\epsilon|^2$ and then be estimated using (\ref{lem:bound_mean_f:eq_e_eps}).
Now we estimate the integrals involving $\Gamma$. First note that $Q_\epsilon=\widetilde{Q_\epsilon}=Q_b$ on $\Gamma\cap \partial\Omega$ with $f(Q_b)=0$, i.e.\ $\int_{\Gamma\cap\partial\Omega}\rho\: f(Q_\epsilon)=0$ and $\int_{\Gamma\cap\partial\Omega} \rho\:|Q_\epsilon-\widetilde{Q_\epsilon}|^2=0$. 
On $\Gamma\setminus \partial\Omega\subset \{\rho=0\}$ we find that all integrals vanish because of the bounds in $Q_\epsilon$ established in Proposition \ref{prop:bounds_Q_eps}. We are left with the two integrals on $\Gamma\cap\partial\Omega$ with gradients. The idea is now to split the gradient into a tangential and a normal part. The tangential part depends only on the boundary data $Q_b$, the normal part needs to be estimated. So let $\tau$ be the unit tangent vector on $\Gamma$.  Decomposing $\nabla' Q_\epsilon = (\nu\cdot\nabla' Q_\epsilon)\nu + (\tau\cdot\nabla' Q_\epsilon)\tau$ and substituting this into $\int_{\Gamma\cap\partial\Omega}\rho(x-\overline{x})\cdot\nu\frac{1}{2}|\nabla' Q_\epsilon|^2$ yields
\begin{align*}
\frac{3}{\epsilon^2} \int_{B_r(x_0)\cap\Omega'} \rho\: f(Q_\epsilon) \dx x &\leq 4\frac{\overline{C}}{\alpha} + C\epsilon^{(\alpha-\gamma)/4} - \int_{\Gamma\cap\partial\Omega} \rho\:((x-\overline{x})\cdot\nabla' Q_\epsilon):(\nu\cdot\nabla' Q_\epsilon) \\
&\:\:\,+ \frac{1}{2}\int_{\Gamma\cap\partial\Omega} \rho\:((x-\overline{x})\cdot\nu)|\nu\cdot\nabla' Q_\epsilon|^2 + \frac{1}{2}\int_{\Gamma\cap\partial\Omega} \rho\:((x-\overline{x})\cdot\nu)|\tau\cdot\nabla' Q_\epsilon|^2 \\
&\leq 4\frac{\overline{C}}{\alpha} + C\epsilon^{(\alpha-\gamma)/4} + C_{Q_b}\epsilon^{\alpha/2} -\frac{1}{2}\int_{\Gamma\cap\partial\Omega} \rho\:((x-\overline{x})\cdot\nu)|\nu\cdot\nabla' Q_\epsilon|^2 \\
&\:\:\:\:\:-\int_{\Gamma\cap\partial\Omega} \rho\:((x-\overline{x})\cdot\tau)(\tau\cdot\nabla' Q_b):(\nu\cdot\nabla' Q_\epsilon)\, ,
\end{align*} where we used that $(x-\overline{x})=((x-\overline{x})\cdot\nu)\nu + ((x-\overline{x})\cdot\tau)\cdot\tau$ and that $\tau\cdot\nabla' Q_\epsilon = \tau\cdot\nabla' Q_b$ only depends on the given boundary values. 
We apply the inequality $ab\leq a^2/(2C^2)+ C^2 b^2/2$ with $C=\sqrt{C_\Omega/2}$ from Proposition \ref{prop:Ome_bdry_Lip} to get
\begin{align*}
\frac{3}{\epsilon^2} \int_{B_r(Q_\epsilon)\cap\Omega'} \rho\: f(Q_\epsilon) \dx x &\leq 4\frac{\overline{C}}{\alpha} + C\epsilon^{(\alpha-\gamma)/4} + C_{Q_b}\epsilon^{\alpha/2} -\frac{1}{2}\int_{\Gamma\cap\partial\Omega} \rho\:((x-\overline{x})\cdot\nu)|\nu\cdot\nabla' Q_\epsilon|^2 \\
&\hspace*{-0.5cm}+ \frac{1}{C_\Omega}\int_{\Gamma\cap\partial\Omega} \rho\:|(x-\overline{x})\cdot\tau||\tau\cdot\nabla' Q_b|^2 + \frac{C_\Omega}{4}\int_{\Gamma\cap\partial\Omega} \rho\:|(x-\overline{x})\cdot\tau||\nu\cdot\nabla' Q_\epsilon|^2 \, .
\end{align*}
Then we apply Proposition \ref{prop:Ome_bdry_Lip} to get
\begin{align*}
\frac{1}{\epsilon^2} \int_{B_r(Q_\epsilon)\cap\Omega'} \rho\: f(Q_\epsilon) \dx x &\leq 4\frac{\overline{C}}{\alpha} + C\epsilon^{(\alpha-\gamma)/4} + C_{Q_b}\epsilon^{\alpha/2} -\frac{1}{2}\int_{\Gamma\cap\partial\Omega} C_\Omega r \rho\:|\nu\cdot\nabla' Q_\epsilon|^2 \\
&\:\:\:\:\:+ \frac{C_\Omega}{4}\int_{\Gamma\cap\partial\Omega}  2 r \rho\:|\nu\cdot\nabla' Q_\epsilon|^2 \\
&= 4\frac{\overline{C}}{\alpha} + C\epsilon^{(\alpha-\gamma)/4} + C_{Q_b}\epsilon^{\alpha/2} \, .
\end{align*}
\end{proof}

\begin{proposition} \label{prop:Ome_bdry_Lip}
There exist constants $C_\Omega,\epsilon_1>0$ such that for all $0<\epsilon\leq\epsilon_1$, $r\in (\epsilon^\alpha,\epsilon^\frac{\alpha}{2})$ and $y\in\Omega'$ there exists $z\in B_r(y)\cap\Omega'$ such that
$$ \nu(x)\cdot (x-z) \geq C_\Omega r \quad \forall x\in \partial\Omega'\cap B_r(y)\, , $$
where $\nu$ is the outward unit normal on $\partial\Omega'$.
\end{proposition}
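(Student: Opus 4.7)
The plan is a direct geometric construction. The domain $\Omega'$ is a planar Lipschitz region whose boundary consists of the two axis rays $\Gamma^\pm:=\{(0,z):\pm z\geq 1\}$ and the half-arc $\Gamma^\circ:=\{(\rho,z):\rho\geq 0,\,\rho^2+z^2=1\}$, meeting at the corner points $(0,\pm 1)$ with interior angle $\pi/2$. The proposition is a quantitative uniform interior cone condition at scale $r$, and the positive constant $C_\Omega$ will come from the non-degeneracy of this corner angle. Since $r\leq \epsilon^{\alpha/2}\to 0$, we may freely assume $r\leq r_0$ for any absolute threshold $r_0>0$ by choosing $\epsilon_1$ small enough at the end.

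I would set $z:=y+\tfrac{r}{2}d(y)$, where the unit vector $d(y)$ is given by the case split
\begin{align*}
d(y):=\begin{cases}(1,1)/\sqrt{2}&\text{if }y_z\geq 1-r,\\ y/|y|&\text{if }|y_z|<1-r,\\ (1,-1)/\sqrt{2}&\text{if }y_z\leq -(1-r).\end{cases}
\end{align*}
In the two corner regimes $d(y)$ is the interior bisector at $(0,\pm 1)$; in the middle regime it is the outward radial direction, which agrees with the interior normal to $\Gamma^\circ$. The inclusion $|z-y|=r/2<r$ is immediate, and $z\in\Omega'$ follows in each case by elementary computation, using $|y|\geq 1$ and, in the middle regime, the lower bound $y_\rho\geq \sqrt{1-y_z^2}\geq \sqrt{r}$ forced by $|y_z|<1-r$.

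To verify $\nu(x)\cdot(x-z)\geq C_\Omega r$, I split according to which piece of $\partial\Omega'$ contains $x$. On $\Gamma^\pm$ the normal is $(-1,0)$ and $\nu(x)\cdot(x-z)=z_\rho$, which is at least $r/(2\sqrt{2})$ in the corner regimes; in the middle regime $\Gamma^\pm\cap B_r(y)=\emptyset$ since $\dist(y,\Gamma^\pm)\geq 1-|y_z|>r$, so there is nothing to check. On $\Gamma^\circ$, writing $\nu(x)=-x$ and using $|x|=1$, $|y|\geq 1$ together with polarization yields
\[
\nu(x)\cdot(x-z)=x\cdot z-1\geq -\tfrac{r^2}{2}+\tfrac{r}{2}\,x\cdot d(y).
\]
In the middle regime, $x\cdot d(y)=(x\cdot y)/|y|\geq (1-r^2/2)/|y|$; moreover the presence of an arc point in $B_r(y)$ forces $|y|\leq 1+r$, so $x\cdot d(y)\geq 1/2$ for $r$ small. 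In the corner regimes, the elementary observation that an arc point in $B_r(y)$ with $y_z\geq 1-r$ must lie within $3\sqrt{r}$ of $(0,1)$ gives $x\cdot d(y)\geq (1-3\sqrt{r})/\sqrt{2}\geq 1/(2\sqrt{2})$. Combining everything we obtain $\nu(x)\cdot(x-z)\geq C_\Omega r$ with an absolute $C_\Omega>0$ (say $C_\Omega=1/(8\sqrt{2})$) once $r_0$ is fixed small enough.

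The main obstacle is the uniform control at the corners: $d(y)$ is fixed but the arc normal $\nu(x)=-x$ varies with $x$. What makes the estimate close is the fact that in the corner regime all arc points $x\in B_r(y)\cap\Gamma^\circ$ lie within $O(\sqrt{r})$ of the corner, so the inner product $x\cdot d(y)$ stays bounded below by an absolute constant, and the polarization remainder $-r^2/2$ is easily absorbed by the linear term $r/(4\sqrt{2})$. Setting $\epsilon_1:=r_0^{2/\alpha}$ ensures $r\in(\epsilon^\alpha,\epsilon^{\alpha/2})\subset(0,r_0)$ for all $\epsilon\leq \epsilon_1$, which concludes the argument.
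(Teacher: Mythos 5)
Your proof is correct. You construct the point $z$ by displacing $y$ a distance $r/2$ into the interior along a direction $d(y)$ that switches between the corner bisectors and the radial direction according to the value of $y_z$, and then verify the estimate by explicit polarization identities on the arc, using the fact that $\Gamma^\pm$ is straight with constant normal. This differs from the paper's route, which first proves the estimate on the model quarter-plane $R=\{x_1,x_2>0\}$ with a ``balanced'' choice $z=y+\frac{r}{2}(L_2/L,L_1/L)$ weighted by the lengths $L_1,L_2$ of the two boundary segments inside $B_r(y)$, and then transfers the conclusion to $\Omega'$ by observing that the outward normals of $\Omega'$ near a corner deviate from those of the tangent quarter-plane by an angle at most $\arccos(1-r)$, which shrinks the constant by a factor $\cos(\pi/4+\arccos(1-r))$ but keeps it positive for $\epsilon_1$ small. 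What your direct construction buys is that all the geometry stays explicit on $\Omega'$ itself: the arc contribution is controlled by the polarization identity $x\cdot y = \frac{1}{2}(1+|y|^2-|x-y|^2)$ and the observation that arc points in $B_r(y)$ lie within $O(\sqrt{r})$ of the corner, so the normal-comparison step, which the paper states somewhat informally, is replaced by clean inequalities. In the middle regime the two approaches essentially coincide (both reduce to the half-plane case with $z$ displaced radially); the corner regime is where they differ, and both are valid. One small aside: you never need the lower bound $y_\rho\geq\sqrt{r}$ in the middle regime --- $z_\rho>0$ already follows from $d(y)_\rho>0$, and $|z|=|y|+r/2>1$ is immediate --- but its presence is harmless.
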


\begin{proof}
Let us start by considering the domain $R=\{(x_1,x_2)\in\mathbb{R}^2\sd x_1,x_2>0\}$. Let $y\in R$ and $r>0$ such that $B_r(y)\cap\partial R\neq\emptyset$ (otherwise the result is trivial). Let $L_1=|\{x_2=0\}\cap B_r(y)|$ and $L_2=|\{x_1=0\}\cap B_r(y)|$. Then we define $z=y+\frac{r}{2}\left(R_1/L(0,1)^\top+L_1/L(1,0)^\top\right)$, where $L^2=L_1^2+L_2^2$. We will show that this definition of $z$ indeed satisfies our claim. Without loss of generality we may assume that $y_1\geq y_2$. We consider the following cases:
\begin{enumerate}
\item $(0,0)\in B_r(y)$. In this case, $L_1=y_1+\sqrt{r^2-y_2^2}$ and $L_2=y_2+\sqrt{r^2-y_1^2}$. Let $x=(x_1,0)$. Then $\nu(x)=(0,-1)^\top$ and
$$ \nu(x)\cdot(x-z) = (y_2-x_2) + \frac{r}{2}\frac{L_1}{L} \geq \frac{r}{2} \frac{L_1}{L}\, . $$
Analogously, for $x=(0,x_2)$ we find $\nu\cdot(x-z)\geq\frac{r}{2}\frac{L_2}{L}$.
Since $y_1\geq y_2$ we have also the inequality $L_1\geq L_2$. Minimizing $L_2/L$ subject to the constraint $y_1\geq y_2$ we get $y_1=y_2$ and thus $L_1=L_2$, i.e.\ $\nu(x)\cdot(x-z)\geq \frac{r}{2\sqrt{2}}$.
\item $L_2\neq 0$ and $(0,0)\notin B_r(y)$. Then $L_1=2\sqrt{r^2-y_2^2}$ and $L_2=2\sqrt{r^2-y_1^2}$. A similar calculation as in the first case shows that $\nu(x)\cdot(x-z)\geq \frac{r}{2\sqrt{2}}$.
\item $L_2=0$. The lengths $L_1,L_2$ are given as in the second case, but since $L_2=0$ we get directly $\nu(x)\cdot(x-z)\geq \frac{r}{2} \frac{L_1}{L}=\frac{r}{2}$.
\end{enumerate}
Now we consider the domain $\Omega'$. For a radius $0<r<\frac{1}{2}$ the angular difference between the normal vectors of $\Omega'$ and $R$ is smaller than $\arccos(1-r)$. Thus, for $\epsilon_1$ small enough, $0<\epsilon\leq\epsilon_1$, $r\in (\epsilon^\alpha,\epsilon^\frac{\alpha}{2})$, we can find $C_\Omega>0$ such that
$$ \nu(x)\cdot(x-z) \geq \frac{r}{2}\cos\left(\frac{\pi}{4} + \arccos(1-r)\right) \geq \frac{r}{2}\cos\left(\frac{\pi}{4} + \arccos(1-\epsilon_1^{\alpha/2})\right) \geq C_\Omega\: r > 0\, . $$
\end{proof}

We have now all the necessary tools to prove the second important ingredient for the proof of Theorem \ref{thm:finite_set_of_sing}.

\begin{proposition} \label{prop:bound_grad_Q_eps_eta_log_dist}
For all $\delta,\sigma>0$ there exist $\epsilon_2,\zeta_\alpha>0$ such that for $0<\epsilon\leq \epsilon_2$ and $x_0\in\Omega_\sigma'$ the following implication holds:
$$\dist(Q_\epsilon(x_0),\N)>\delta \quad\Rightarrow\quad E_\epsilon^{2D}(Q_\epsilon,B_{\epsilon^\alpha}(x_0)\cap\Omega') \geq \zeta_\alpha(|\ln\epsilon| + 1)\rhomin(x_0,\epsilon^\alpha)\, , $$
with $\rhomin\geq\sigma$ defined as in \eqref{def:rho_min}. The constant $\zeta_\alpha$ can be chosen to be dependent only on $\alpha$ and $\delta$, while $\epsilon_2$ depends on $\delta,\sigma,\alpha,\gamma$.
\end{proposition}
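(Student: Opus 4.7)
The plan is to combine the contrapositive of the clearing-out Proposition \ref{prop:clearing_out} with the Pokhozhaev identity (Proposition \ref{prop:pohozaev}), then integrate the resulting pointwise-in-radius bound over a logarithmic range of scales. Specifically, since $\dist(Q_\epsilon(x_0),\N)>\delta$ and trivially $x_0\in B_l(x_0)\cap\Omega_\sigma'$ for any $l>0$, the contrapositive of Proposition \ref{prop:clearing_out} gives that for every $l\in[\lambda_0\epsilon,1]$
\begin{equation*}
\frac{1}{\epsilon^2}\int_{B_{2l}(x_0)\cap\Omega_\sigma'}\rho\,f(Q_\epsilon)\dx x\;>\;\mu_0\,\rhomin(x_0,2l)\, .
\end{equation*}

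Next, I apply Pokhozhaev on $\omega'=B_R(x_0)\cap\Omega'$ with $R=2l$, choosing $\overline{x}=x_0$ when $B_R(x_0)\subset\Omega'$ and otherwise $\overline{x}=z$ provided by Proposition \ref{prop:Ome_bdry_Lip}, so that the $\partial\Omega'$--boundary pieces are handled exactly as in the second part of the proof of Lemma \ref{lem:bound_mean_f} (the tangential gradient is fixed by the smooth data $Q_b$, and the normal gradient is absorbed by the sign provided by $(x-\overline x)\cdot\nu\geq C_\Omega R$). Dropping the non-negative terms $\tfrac12\int\rho|\nabla' Q_\epsilon|^2$ and $\tfrac1{2\epsilon^\gamma}\int\rho|Q_\epsilon-\widetilde{Q_\epsilon}|^2$ on the right-hand side, using $|(x-\overline x)\cdot\nu|\leq 2R$ on $\partial B_R$ together with $(x-\overline x)\cdot\nabla' Q_\epsilon:\nu\cdot\nabla' Q_\epsilon\leq 2R|\nabla' Q_\epsilon|^2$ on the spherical piece, and controlling the cross term $\tfrac1{\epsilon^\gamma}\int\rho(Q_\epsilon-\widetilde{Q_\epsilon}):((x-\overline x)\cdot\nabla'\widetilde{Q_\epsilon})$ by Cauchy--Schwarz together with Remark \ref{rem:def_Q_eps} (item 3) and the energy bound \eqref{eq:energy_bound}, I obtain an inequality of the shape
\begin{equation*}
\int_{\partial B_R(x_0)\cap\Omega'}\rho\cdot e_\epsilon(Q_\epsilon)\dx\sigma\;\geq\;\frac{3\mu_0\,\rhomin(x_0,R)}{2R}\;-\;\frac{C\,\epsilon^{\alpha-\gamma/2}|\ln\epsilon|}{R}\, ,
\end{equation*}
where $e_\epsilon$ denotes the full two-dimensional energy density appearing in $E_\epsilon^{2D}$.

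Finally I integrate in $R$ over $[2\lambda_0\epsilon,\epsilon^\alpha]$. By Fubini/coarea the left-hand side is bounded above by $E_\epsilon^{2D}(Q_\epsilon,B_{\epsilon^\alpha}(x_0)\cap\Omega')$, and since $l\mapsto\rhomin(x_0,l)=\max\{\rho_0-l,\sigma\}$ is non-increasing one has $\rhomin(x_0,R)\geq\rhomin(x_0,\epsilon^\alpha)$ for $R\leq\epsilon^\alpha$. Hence
\begin{equation*}
E_\epsilon^{2D}(Q_\epsilon,B_{\epsilon^\alpha}(x_0)\cap\Omega')\;\geq\;\frac{3\mu_0}{2}\rhomin(x_0,\epsilon^\alpha)\bigl[(1-\alpha)|\ln\epsilon|-\ln(2\lambda_0)\bigr]\;-\;C\epsilon^{\alpha-\gamma/2}|\ln\epsilon|^2\, .
\end{equation*}
For $\epsilon\leq\epsilon_2$ small enough (in particular requiring $\alpha>\gamma/2$, which matches the constraint already appearing in Lemma \ref{lem:bound_mean_f}), the correction is absorbed and one can choose $\zeta_\alpha=\tfrac{3\mu_0(1-\alpha)}{4}$ (depending only on $\alpha$ and, through $\mu_0$, on $\delta$) to obtain the claim.

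The main obstacle will be the careful bookkeeping of the boundary contributions of the Pokhozhaev identity when $B_R(x_0)$ meets $\partial\Omega'$: one must handle both the tangential piece, controlled by the smooth Dirichlet datum $Q_b$, and the normal piece, which is absorbed using the uniform lower bound $\nu\cdot(x-z)\geq C_\Omega R$ from Proposition \ref{prop:Ome_bdry_Lip}; a completely analogous splitting has already been carried out in Lemma \ref{lem:bound_mean_f}, so the new content here is genuinely the Fubini step combined with the clearing-out lower bound, rather than a new boundary analysis. A secondary care point is ensuring that the $\epsilon^{-\gamma}$ cross term stays $o(|\ln\epsilon|\cdot\rhomin(x_0,\epsilon^\alpha))$ uniformly in $R\in[2\lambda_0\epsilon,\epsilon^\alpha]$, which dictates the admissible range of $\alpha$ versus $\gamma$.
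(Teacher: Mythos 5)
Your proposal is correct in its essential structure, and it is a genuinely different route from the paper's, even though the two use exactly the same three ingredients. The paper argues by \emph{contradiction}: assuming $E_\epsilon^{2D}(Q_\epsilon,B_{\epsilon^\alpha}(x_0)\cap\Omega')\leq\zeta_\alpha(|\ln\epsilon|+1)\rhomin$, it averages over the annulus $\epsilon^{2\alpha}<r<\epsilon^\alpha$ to find \emph{one} good circle with small boundary flux, feeds that into the Pokhozhaev machinery of Lemma \ref{lem:bound_mean_f} to get an upper bound on $\frac1{\epsilon^2}\int\rho f$ on $B_r(x_0)$, and then applies Proposition \ref{prop:clearing_out} to conclude $\dist(Q_\epsilon(x_0),\N)\le\delta$, a contradiction. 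You argue \emph{directly}: the contrapositive of Proposition \ref{prop:clearing_out} gives a lower bound on $\frac1{\epsilon^2}\int\rho f$ at \emph{all} scales $R\in[2\lambda_0\epsilon,\epsilon^\alpha]$, Pokhozhaev then yields a lower bound on the boundary flux at each scale, and a coarea integration over $R$ recovers the full energy lower bound. These are dual arrangements of the same estimates; your Fubini step is the mirror image of the paper's averaging Lemma \ref{lem:averaging}. What the paper's arrangement buys is modularity: it needs to control the boundary-of-$\Omega'$ terms only once, at one circle, and can literally reuse the calculation from Lemma \ref{lem:bound_mean_f}. Your arrangement is more transparent but requires the $\Gamma$-bookkeeping \emph{for every} $R$, and with the inequality direction reversed — the decomposition and the sign $\nu\cdot(x-z)\geq C_\Omega R$ from Proposition \ref{prop:Ome_bdry_Lip} still do the job, but the terms that Lemma \ref{lem:bound_mean_f} could drop (because they had the favorable sign for an \emph{upper} bound on $\int\rho f$) must now be absorbed into the left-hand side, degrading the constants.

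Two small inaccuracies in the bookkeeping are worth flagging, although neither threatens the proof. First, the cross term $\frac1{\epsilon^\gamma}\int\rho(Q_\epsilon-\widetilde{Q_\epsilon}):((x-\overline x)\cdot\nabla'\widetilde{Q_\epsilon})$ already carries a factor $|x-\overline x|\le 2R$, so after dividing by $R$ the correction to the per-radius bound is of order $\epsilon^{-\gamma/2}(1+|\ln\epsilon|)$ \emph{without} the $1/R$ that appears in your display; the integrated correction is then $C\epsilon^{\alpha-\gamma/2}(1+|\ln\epsilon|)$ rather than $C\epsilon^{\alpha-\gamma/2}|\ln\epsilon|^2$, which still vanishes once $\alpha>\gamma/2$. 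Second, when $\overline x=z\neq x_0$ the spherical-boundary term $\int_{\partial B_R\cap\Omega'}\rho((x-\overline x)\cdot\nabla'Q_\epsilon):(\nu\cdot\nabla'Q_\epsilon)$ is no longer the pure square $R\int|\nu\cdot\nabla'Q_\epsilon|^2\ge 0$; it has a piece coming from $(x_0-z)\cdot\nabla'Q_\epsilon$ bounded only by $\pm 2R\int\rho|\nabla'Q_\epsilon|^2$, which you must absorb into the left-hand side $2R\int_{\partial B_R\cap\Omega'}\rho e_\epsilon$ before dividing by $R$ (this is fine, since $\frac12|\nabla'Q_\epsilon|^2\le e_\epsilon$, and costs a fixed factor). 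Writing "$\leq 2R|\nabla'Q_\epsilon|^2$" suggests you are estimating it in the wrong direction; it is the lower bound $\geq -2R|\nabla'Q_\epsilon|^2$ that you need, together with the absorption step.
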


\begin{proof}
Let's assume that the conclusion does not hold at $x_0\in\Omega_\sigma'$, i.e.\ $E_\epsilon^{2D}(Q_\epsilon,B_{\epsilon^\alpha}(x_0)\cap\Omega') \leq \zeta_\alpha(|\ln\epsilon| + 1)\rhomin(x_0,\epsilon^\alpha) $. Then there exists a radius $r\in (\epsilon^{2\alpha},\epsilon^\alpha)$ such that 
\begin{equation} \label{prop:bound_grad_Q_eps_eta_log_dist_eq1}
\int_{\partial B_r(x_0)\cap\Omega'} \rho\:\left(\frac{1}{2}|\nabla Q_\epsilon|^2 + \frac{1}{\epsilon^2} f(Q_\epsilon) + \frac{1}{2\epsilon^\gamma}|Q_\epsilon-\widetilde{Q_\epsilon}|^2\right) \dx x \leq  \frac{2\zeta_\alpha\rhomin(x_0,\epsilon^\alpha)}{\alpha r}\, .
\end{equation}
Indeed, otherwise
$$ E_\epsilon^{2D}(Q_\epsilon,B_{\epsilon^\alpha}(x_0)\cap\Omega') \geq \int_{\epsilon^{2\alpha}}^{\epsilon^\alpha} \frac{2\zeta_\alpha\rhomin(x_0,\epsilon^\alpha)}{\alpha r} \dx r = 2\zeta_\alpha\rhomin(x_0,\epsilon^\alpha)|\ln(\epsilon)|\, , $$
which clearly contradicts our assumption for $\epsilon<\frac{1}{e}$.

Replacing (\ref{lem:bound_mean_f:eq_e_eps}) by (\ref{prop:bound_grad_Q_eps_eta_log_dist_eq1}) in the proof of Lemma \ref{lem:bound_mean_f}, i.e.\ $\overline{C}=2\zeta_\alpha\rhomin(x_0,\epsilon^\alpha)$, we find
$$ \frac{1}{\epsilon^2}\int_{B_r(x_0)\cap\Omega'} \rho\: f(Q_\epsilon) \leq \frac{8\zeta_\alpha\rhomin(x_0,\epsilon^\alpha)}{\alpha} + C\epsilon_2^{(\alpha-\gamma)/4}\, , $$
where the constant $C$ can be chosen to be independent of $\alpha$ and $\epsilon$. 
We choose $\epsilon_2$ small enough such that it satisfies the  estimate $\lambda_0 \epsilon_2 < \frac{1}{2}\epsilon_2^{\alpha}$.
Now choose $\zeta_\alpha\leq \frac{\alpha\:\mu_0}{16}$ and $\epsilon_2\leq (\frac{\mu_0\sigma}{2C})^\frac{4}{\alpha-\gamma}$, where $\mu_0$ is the constant from Proposition \ref{prop:clearing_out}. These bounds imply that $\mu_0\rhomin(x_0,\epsilon^\alpha)\geq\frac{8\zeta_\alpha\rhomin(x_0,\epsilon^\alpha)}{\alpha} + C\epsilon_2^{(\alpha-\gamma)/4}$, i.e.\ we can apply Proposition \ref{prop:clearing_out} with $l=\frac{1}{2}\epsilon^\alpha$. This implies $\dist(Q_\epsilon(x_0),\N)\leq \delta$, which proves the claim.
\end{proof}

Now we can finally prove Theorem \ref{thm:finite_set_of_sing} and define the set of singularities $X_\epsilon$. To do this, one can proceed as follows: In a first step we cover $\Omega$ with balls of size $\epsilon^\alpha$ and look for balls where the energy is large. The number of such balls has to be finite because of the energy bound. In view of Proposition \ref{prop:bound_grad_Q_eps_eta_log_dist}, $Q_\epsilon$ will be almost uniaxial outside of these balls. In the second step we improve our estimates to the scale $\epsilon$. We cover the balls with high energy from step one with balls of size $\epsilon$ and determine balls where $f$ is large. By Lemma \ref{lem:bound_mean_f} this number will be finite too and Proposition \ref{prop:clearing_out} implies that $Q_\epsilon$ is indeed close to $\N$ on all other balls. We can then take $X_\epsilon$ to be the set of all centers of balls with large energy.

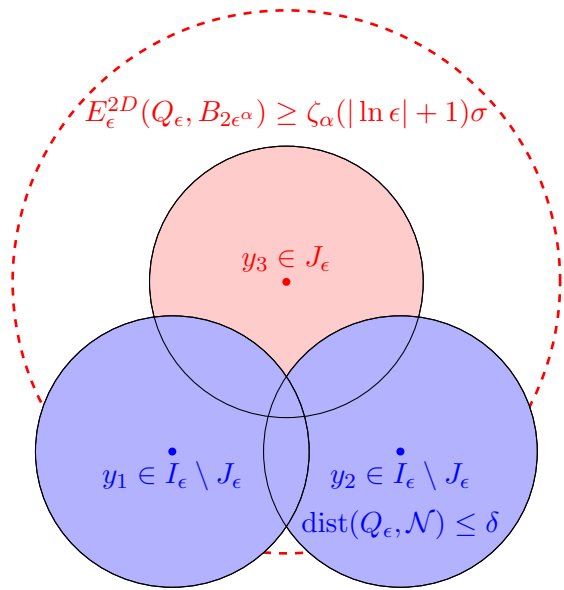
\begin{figure}[H]
\begin{center}
\begin{tikzpicture}[scale=0.75]
\draw[fill=red!20!white] (2,3) circle (2.4cm);
\draw[red, dashed, line width=1] (2,3) circle (4.8cm);
\draw[fill=blue!30!white] (4,0) circle (2.4cm);
\draw[fill=blue!30!white] (0,0) circle (2.4cm);
\draw[black] (4,0) circle (2.4cm);
\draw[black] (0,0) circle (2.4cm);
\draw[black] (2,3) circle (2.4cm);

\fill[red] (2,3)  circle[radius=2pt] node[above] {$y_3\in J_\epsilon$};
\fill[blue] (0,0)  circle[radius=2pt] node[below] {$y_1\in I_\epsilon\setminus J_\epsilon$};
\fill[blue] (4,0)  circle[radius=2pt] node[below] {$y_2\in I_\epsilon\setminus J_\epsilon$};

\node[blue] at (4,-1.3) {$\dist(Q_\epsilon,\N)\leq \delta$};
\node[red] at (2,6) {$E_\epsilon^{2D}(Q_\epsilon, B_{2\epsilon^{\alpha}}) \geq \zeta_\alpha (|\ln\epsilon|+1)\sigma $};
\end{tikzpicture}
\end{center}
\caption{First covering argument: Find balls $B_{\epsilon^\alpha}$, where the energy is large}
\label{fig:finite_sing_proof_1}
\end{figure}

\begin{proof}[Proof of Theorem \ref{thm:finite_set_of_sing}]
Let $\delta,\sigma>0$ be given and choose $\alpha\in (0,1)$. Let $\{ B_{\epsilon^{\alpha}}(y)\sd y\in \Omega' \}$ be a covering of $\Omega'$. By Vitali Covering Lemma there exists a countable family of points $\{y_i\}_{i\in I_\epsilon}$ such that
$$ \Omega'\subset \bigcup_{i\in I_\epsilon} B_{\epsilon^{\alpha}}(y_i)\, , \quad B_{\frac{1}{5}\epsilon^{\alpha}}(y_i)\cap B_{\frac{1}{5}\epsilon^{\alpha}}(y_j) = \emptyset \text{ if } i\neq j\, . $$

Let $\zeta_\alpha>0$ be given as in Proposition \ref{prop:bound_grad_Q_eps_eta_log_dist}. We define 
$$ J_\epsilon \defi \left\{ i\in I_\epsilon \sd E_\epsilon^{2D}(Q_\epsilon,B_{2\epsilon^{\alpha}}(y_i)\cap\Omega') > \zeta_\alpha(1+|\ln\epsilon|)\sigma \right\}\, . $$
 Then by the energy bound (\ref{eq:energy_bound}),
\begin{equation} \label{thm:finite_set_of_sing:eq_bound_J_eps}
\zeta_\alpha(1+|\ln\epsilon|)\sigma \# J_\epsilon \leq \sum_{i\in J_\epsilon} E_\epsilon^{2D}(Q_\epsilon,B_{2\epsilon^{\alpha}}(y_i)\cap\Omega') \leq C E_\epsilon^{2D}(Q_\epsilon,\Omega')\leq  C(1+|\ln\epsilon|)\, .
\end{equation} Indeed, note that there is a constant $C$ depending only on the space dimension such that each point in $\Omega'$ is covered by at most $C$ balls. This implies the second inequality in (\ref{thm:finite_set_of_sing:eq_bound_J_eps}). From (\ref{thm:finite_set_of_sing:eq_bound_J_eps}) we directly infer that the cardinality of $J_\epsilon$ is bounded by a constant dependent on $\delta,\sigma,\alpha$ as well as the space dimension and the energy bound, but independent of $\epsilon$. Let $i\in I_\epsilon\setminus J_\epsilon$ and $x_0\in B_{\epsilon^\alpha}(y_i)\cap \Omega_\sigma'$. If $\dist(Q_\epsilon(x_0),\N)>\delta$ we deduce by  Proposition \ref{prop:bound_grad_Q_eps_eta_log_dist} that $E_\epsilon^{2D}(Q_\epsilon, B_{2\epsilon^\alpha}(y_i)\cap\Omega') \geq E_\epsilon^{2D}(Q_\epsilon, B_{\epsilon^\alpha}(x_0)\cap\Omega') > \zeta_\alpha(|\ln(\epsilon)|+1)\sigma$, a contradiction to $i\in I_\epsilon\setminus J_\epsilon$. Hence
$$ \dist(Q_\epsilon(x),\N)\leq \delta \quad \forall x\in B_{\epsilon^\alpha}(y_i)\cap \Omega_\sigma', i\in I_\epsilon\setminus J_\epsilon\, . $$
See also Figure \ref{fig:finite_sing_proof_1}. Note, that this estimate is not good enough since we announced the radius around points in $X_\epsilon$ to be of order $\epsilon$ instead of $\epsilon^\alpha$. 

Now fix $i\in J_\epsilon$. Again by Vitali covering Lemma we can consider a covering of $B_{\epsilon^{\alpha}}(y_i)\cap\Omega_\sigma'$ of the form
$$ B_{\epsilon^{\alpha}}(y_i)\cap\Omega_\sigma' \subset \bigcup_{j\in I_{\epsilon,i}} B_{\lambda_0\epsilon}(z_j)\, , \quad B_{\frac{1}{5}\lambda_0\epsilon}(z_j)\cap B_{\frac{1}{5}\lambda_0\epsilon}(z_k) = \emptyset \text{ if } j\neq k\, , $$
with all $z_j\in B_{\epsilon^\alpha}(y_i)$ and where $\lambda_0$ is given by Proposition \ref{prop:clearing_out}. Furthermore, we define
$$ J_{\epsilon,i} \defi \left\{ j\in I_{\epsilon,i} \sd \frac{1}{\epsilon^2}\int_{B_{2\lambda_0\epsilon}(z_j)\cap\Omega_\sigma'} \rho\: f(Q_\epsilon) \geq \mu_0\:\sigma \right\}\, , $$
with $\mu_0$ again from Proposition \ref{prop:clearing_out}. By Lemma \ref{lem:bound_mean_f}, recalling that $2\lambda_0\epsilon < \epsilon^\alpha$
\begin{equation} \label{thm:finite_set_of_sing:eq_bound_J_eps_i}
\mu_0\:\sigma\: \# J_{\epsilon,i} \leq \sum_{j\in J_{\epsilon,i}} \frac{1}{\epsilon^2}\int_{B_{2\lambda_0\epsilon}(z_j)\cap\Omega_\sigma'} \rho\: f(Q_\epsilon) \leq \frac{C}{\epsilon^2} \int_{B_{\epsilon^\alpha}(y_i)\cap\Omega'} \rho\:f(Q_\epsilon)  \leq C_\alpha\, ,
\end{equation}
so that $\# J_{\epsilon,i}$ is also bounded independently of $\epsilon$. Applying Proposition \ref{prop:clearing_out} to the sets $B_{2\lambda_0\epsilon}(z_j)$ for $j\in I_{\epsilon,i}\setminus J_{\epsilon,i}$ we get that $\dist(Q_\epsilon(x),\N)\leq \delta$ for all $x\in B_{\lambda_0\epsilon}(z_j)\cap\Omega_\sigma'$, see Figure \ref{fig:finite_sing_proof_2}. 
Thus, setting $ X_\epsilon\defi \bigcup_{i\in J_\epsilon} J_{\epsilon,i} $ yields the result.
\end{proof}

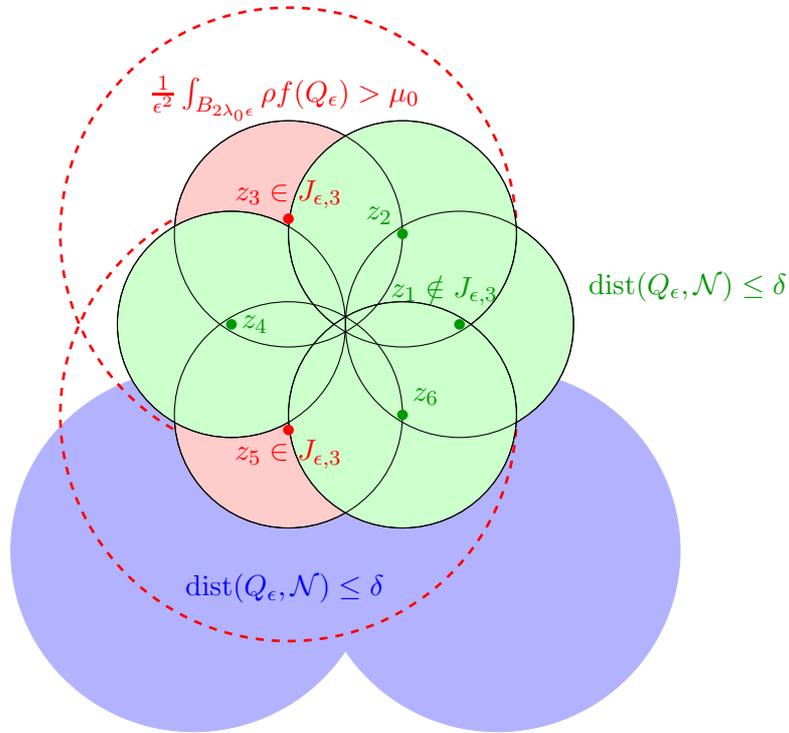
\begin{figure}[H]
\begin{center}
\begin{tikzpicture}
\draw[blue!30!white,fill=blue!30!white] (4,0) circle (2.4cm);
\draw[blue!30!white,fill=blue!30!white] (0,0) circle (2.4cm);

\draw[red, dashed, line width=1] (2-0.75,3-1.2) circle (3cm);
\draw[red, dashed, line width=1] (2-0.75,3+1.2) circle (3cm);
\draw[fill=red!20!white] (2-0.75,3+1.2) circle (1.5cm);
\draw[fill=red!20!white] (2-0.75,3-1.2) circle (1.5cm);

\draw[fill=green!20!white] (2+1.5,3) circle (1.5cm);
\draw[fill=green!20!white] (2-1.5,3) circle (1.5cm);
\draw[fill=green!20!white] (2+0.75,3+1.2) circle (1.5cm);
\draw[fill=green!20!white] (2+0.75,3-1.2) circle (1.5cm);

\draw[black] (2+1.5,3) circle (1.5cm);
\draw[black] (2-1.5,3) circle (1.5cm);
\draw[black] (2+0.75,3+1.2) circle (1.5cm);
\draw[black] (2+0.75,3-1.2) circle (1.5cm);
\draw[black] (2-0.75,3+1.2) circle (1.5cm);
\draw[black] (2-0.75,3-1.2) circle (1.5cm);

\fill[green!60!black] (2+1.5,3)  circle[radius=2pt]; \node[green!60!black] at (2+1.3,3+0.45) {$z_1\notin J_{\epsilon,3}$};
\fill[green!60!black] (2+0.75,3+1.2)  circle[radius=2pt] node[above left] {$z_2$};
\fill[red] (2-0.75,3+1.4)  circle[radius=2pt] node[above] {$z_3\in J_{\epsilon,3}$};
\fill[green!60!black] (2-1.5,3)  circle[radius=2pt] node[right] {$z_4$};
\fill[red] (2-0.75,3-1.4)  circle[radius=2pt] node[below] {$z_5\in J_{\epsilon,3}$};
\fill[green!60!black] (2+0.75,3-1.2)  circle[radius=2pt] node[above right] {$z_6$};

\node[blue] at (1.2,-0.5) {$\dist(Q_\epsilon,\N)\leq \delta$};
\node[green!60!black] at (6.5,3.5) {$\dist(Q_\epsilon,\N)\leq \delta$}; 
\node[red] at (1.2,6) {$\frac{1}{\epsilon^2}\int_{B_{2\lambda_0\epsilon}}\rho f(Q_\epsilon) >\mu_0$};
\end{tikzpicture}
\end{center}
\caption{Second covering argument: Find balls, where $\frac{1}{\epsilon^2}\int \rho f(Q_\epsilon)$ is large}
\label{fig:finite_sing_proof_2}
\end{figure}

\subsection{Lower bound near singularities}

The goal of this subsection is to precisely determine the cost of a singularity. The plan is to use estimates as in \cite[Chapter 6]{Chiron2004} which generalize the idea of \cite{Jerrard1999,Sandier1998}. The general idea is to decompose the gradient of a function into a derivative of its norm and of its phase as for example 
$$ |\nabla u|^2 = |\nabla|u||^2 + |u|^2\Big|\nabla \frac{u}{|u|}\Big|^2 $$
for any vectorial function $u$ that does not vanish.
Following \cite{Canevari2015}, we replace the phase $u/|u|$ by the projection of $Q_\epsilon$ onto $\N$. As a substitute for the norm, we introduce the auxiliary function $\phi$.

\begin{definition} \label{def:phi}
We define the function $\phi:\Sym\rightarrow\mathbb{R}$ by
$$ \phi(Q) = \begin{cases} \frac{1}{s_*} s(Q)\left(1-r(Q)\right) & Q\in\Sym\setminus\{0\}\, , \\ 0 & Q=0\, , \end{cases} $$
where $s_*$ is given as in Proposition \ref{prop:prop_f} and $s,r$ are the parameters from the decomposition of $Q$ in Proposition \ref{prop:prop_sym0}.
\end{definition}

\begin{proposition} \label{prop:grad_Q_geq_grad_R}
The function $\phi$ is Lipschitz continuous on $\Sym$ and $C^1$ on $\Sym\setminus\C$ with $\phi(Q)=1$ for all $Q\in \N$. Furthermore, for a domain $\omega\subset\Omega$ and $Q\in C^1(\omega,\Sym)$, the function $\R\circ Q$ is $C^1$ on the open set $Q^{-1}(\Sym\setminus \C)$ and the following estimate holds:
$$ |\nabla Q|^2 \geq \frac{s_*^2}{3}|\nabla (\phi\circ Q)|^2 + (\phi\circ Q)^2 |\nabla (\R\circ Q)|^2 \quad\text{ in } \omega\, , $$
where we use the convention that $(\phi\circ Q)^2 |\nabla (\R\circ Q)|^2\defi 0$ if $Q(x)\in\C$.
\end{proposition}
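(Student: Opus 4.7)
The plan is to use the spectral decomposition throughout. From Proposition \ref{prop:prop_sym0}, if $Q$ has eigenvalues $\lambda_1\geq\lambda_2\geq\lambda_3 = -\lambda_1-\lambda_2$ with associated orthonormal eigenframe $(\mathbf{e}_1,\mathbf{e}_2,\mathbf{e}_3)$, then $s(1-r)=\lambda_1-\lambda_2$, so
$$\phi(Q)=\frac{\lambda_1(Q)-\lambda_2(Q)}{s_*}.$$
The regularity claims then reduce to standard matrix-analysis facts: eigenvalues of symmetric matrices are Lipschitz with respect to the Frobenius norm (giving Lipschitz continuity of $\phi$ on $\Sym$), while a simple eigenvalue and its spectral projection depend smoothly on the matrix (giving $C^1$-regularity of $\phi$ and of $\R(Q)=s_*(\mathbf{e}_1\otimes\mathbf{e}_1-\tfrac{1}{3}\id)$ on the open set $\Sym\setminus\C$ where $\lambda_1>\lambda_2$). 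Composition yields $C^1$-regularity of $\R\circ Q$ on $Q^{-1}(\Sym\setminus\C)$. Finally, for $Q\in\N$ one has $\lambda_1=\tfrac{2}{3}s_*$ and $\lambda_2=-\tfrac{1}{3}s_*$, so $\phi(Q)=1$.

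For the gradient inequality I would argue pointwise on $Q^{-1}(\Sym\setminus\C)$, where the eigenframe can be chosen $C^1$ near any point. Introducing $\omega_{ji}\defi \nabla\mathbf{e}_i\cdot\mathbf{e}_j$ (antisymmetric in $i,j$), differentiation of the spectral decomposition gives
$$\nabla Q=\sum_i\nabla\lambda_i\,\mathbf{e}_i\otimes\mathbf{e}_i + \sum_{i<j}(\lambda_i-\lambda_j)\omega_{ji}\bigl(\mathbf{e}_i\otimes\mathbf{e}_j+\mathbf{e}_j\otimes\mathbf{e}_i\bigr),$$
and orthogonality of the tensor basis yields $|\nabla Q|^2=\sum_i|\nabla\lambda_i|^2+2\sum_{i<j}(\lambda_i-\lambda_j)^2|\omega_{ji}|^2$. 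The two terms on the right-hand side are computed analogously from the formulas $\phi(Q)=(\lambda_1-\lambda_2)/s_*$ and $\R(Q)=s_*(\mathbf{e}_1\otimes\mathbf{e}_1-\tfrac{1}{3}\id)$, giving $\tfrac{s_*^2}{3}|\nabla(\phi\circ Q)|^2=\tfrac{1}{3}|\nabla\lambda_1-\nabla\lambda_2|^2$ and $(\phi\circ Q)^2|\nabla(\R\circ Q)|^2=2(\lambda_1-\lambda_2)^2(|\omega_{21}|^2+|\omega_{31}|^2)$. Substituting $\lambda_3=-\lambda_1-\lambda_2$, the difference between $|\nabla Q|^2$ and the right-hand side simplifies to
$$\tfrac{5}{3}|\nabla\lambda_1|^2+\tfrac{5}{3}|\nabla\lambda_2|^2+\tfrac{8}{3}\nabla\lambda_1\cdot\nabla\lambda_2 + 2\bigl[(\lambda_1-\lambda_3)^2-(\lambda_1-\lambda_2)^2\bigr]|\omega_{31}|^2 + 2(\lambda_2-\lambda_3)^2|\omega_{32}|^2,$$
which is manifestly non-negative: the quadratic form in the vectors $(\nabla\lambda_1,\nabla\lambda_2)$ is positive definite with eigenvalues $3$ and $\tfrac{1}{3}$, and the remaining coefficients are non-negative by the ordering $\lambda_1\geq\lambda_2\geq\lambda_3$.

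The main technical point will be the degeneracy $\lambda_2=\lambda_3$ (which includes $\N$ itself), where $\mathbf{e}_2,\mathbf{e}_3$ and the individual eigenvalues $\lambda_2,\lambda_3$ are not canonically defined. This is harmless because the only basis-dependent quantity appearing in the inequality, $|\omega_{32}|^2$, enters multiplied by the vanishing prefactor $(\lambda_2-\lambda_3)^2$, and the combination $|\nabla\lambda_2|^2+|\nabla\lambda_3|^2$ can be rewritten as $\tfrac{1}{2}|\nabla(\lambda_2+\lambda_3)|^2+\tfrac{1}{2}|\nabla(\lambda_2-\lambda_3)|^2$ using only the smooth symmetric functions $\lambda_2+\lambda_3=-\lambda_1$ and $(\lambda_2-\lambda_3)^2$. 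At points $x$ with $Q(x)\in\C$, the stated convention reduces the claim to $|\nabla Q|^2\geq\tfrac{s_*^2}{3}|\nabla(\phi\circ Q)|^2$: dropping the non-negative second term in what has already been proved on $Q^{-1}(\Sym\setminus\C)$ shows the operator bound $|d\phi_Q|\leq\sqrt{3}/s_*$ there, which extends by density to all of $\Sym$ and yields the remaining inequality almost everywhere.
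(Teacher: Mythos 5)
The paper's own proof is a one-line citation to Lemmas~2.2.3 and~2.2.7 of Canevari's thesis, so your self-contained spectral argument is a genuinely different (and more informative) route. The core of your argument is correct: the identification $\phi(Q)=(\lambda_1-\lambda_2)/s_*$, the formula $|\nabla Q|^2=\sum_i|\nabla\lambda_i|^2+2\sum_{i<j}(\lambda_i-\lambda_j)^2|\omega_{ji}|^2$, the evaluation of $\frac{s_*^2}{3}|\nabla(\phi\circ Q)|^2$ and $(\phi\circ Q)^2|\nabla(\R\circ Q)|^2$ in the eigenframe, and the reduction of the difference to a quadratic form in $(\nabla\lambda_1,\nabla\lambda_2)$ with eigenvalues $3$ and $\tfrac13$ plus two manifestly non-negative rotation terms are all right, and the ordering $\lambda_1\geq\lambda_2\geq\lambda_3$ gives the sign of those last coefficients.

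There is, however, a genuine gap in the regularity part of the argument. You justify ``$C^1$-regularity of $\phi$ and of $\R$'' by appealing to the smoothness of a simple eigenvalue, but that only covers $\lambda_1$ (and hence $\R$). The function $\phi$ also involves $\lambda_2$, and on $\Sym\setminus\C$ the eigenvalue $\lambda_2$ need not be simple: the stratum $\{\lambda_1>\lambda_2=\lambda_3\}$ (which contains the whole uniaxial cone and in particular $\N$) is contained in $\Sym\setminus\C$. On that stratum $\lambda_2$ is only Lipschitz, not differentiable --- for instance, along the path $Q(t)=\mathrm{diag}(2,-1+t,-1-t)$ one finds $\phi(Q(t))=(3-|t|)/s_*$, which has a kink at $t=0$. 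So the claim ``$\phi\in C^1(\Sym\setminus\C)$'' as literally stated is not established by your argument (and in fact does not hold on $\{\lambda_2=\lambda_3\}$); what your argument does prove is $C^1$ regularity on the smaller open set $\{\lambda_1>\lambda_2>\lambda_3\}$. This overstatement is arguably already present in the Proposition as written, but since you claim to prove it you should either restrict to the set where all eigenvalues are simple, or verify what Canevari's Lemma~2.2.3 actually says.

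The same issue propagates to the derivation of the pointwise inequality: the spectral-decomposition computation is valid only on $\{\lambda_1>\lambda_2>\lambda_3\}$, and your final two paragraphs ``extend by density.'' That is the right idea, but as written it is not a proof: one must first observe that $\phi$ is Lipschitz with constant $\sqrt{3}/s_*$ (which follows from your operator bound on the dense set where $d\phi$ exists), conclude $|\nabla(\phi\circ Q)|\leq(\sqrt{3}/s_*)|\nabla Q|$ a.e.\ for any $C^1$ map $Q$, and separately note that $(\phi\circ Q)^2|\nabla(\R\circ Q)|^2$ is continuous on $Q^{-1}(\Sym\setminus\C)$ and vanishes (by the stated convention) on $Q^{-1}(\C)$. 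Combined with the pointwise estimate on $\{\lambda_1>\lambda_2>\lambda_3\}$ this gives the desired inequality a.e., which is how it is used in the paper. So the approach is sound, but the density step should be made explicit rather than asserted.
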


\begin{proof}
The Proposition follows directly from Lemma 2.2.3 and Lemma 2.2.7 in \cite{Canevari2015}.
\end{proof}

The next theorem gives the desired lower bound close to a singularity on a two dimensional unit disk. A proof of this can be found in \cite[Proposition 2.5]{Canevari2015a}.

\begin{theorem} \label{thm:near_singular}
There exist constants $\kappa_*,C>0$ such that for $Q\in H^1(B_1,\Sym)$ satisfying $Q(x)\notin \C$ for all $x\in B_1\setminus B_\frac{1}{2}$ and $(\R\circ Q)|_{\partial B_1}$ is non-trivial, seen as element of $\pi_1(\N)$ the following inequality holds
\begin{align} \label{thm:eq:near_singular_eq}
\int_{B_1} \frac{1}{2}|\nabla' Q|^2  + \frac{1}{\epsilon^2} f(Q) \dx x \geq \kappa_* \phi_0^2(Q,B_1\setminus B_\frac{1}{2})|\ln\epsilon| - C\, ,
\end{align} for a number $\phi_0(Q,B_1\setminus B_\frac{1}{2}) = \mathrm{essinf}_{B_1\setminus B_\frac{1}{2}}\phi(Q) > 0$. Furthermore, $\kappa_* = s_*^2\frac{\pi}{2}$.
\end{theorem}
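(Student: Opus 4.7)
The plan is to exploit the gradient decomposition from Proposition \ref{prop:grad_Q_geq_grad_R}, which on $Q^{-1}(\Sym\setminus\C)$ yields $|\nabla' Q|^2\geq (\phi\circ Q)^2\,|\nabla'(\R\circ Q)|^2$, and to convert the Dirichlet energy into a radial integral that detects the non-trivial homotopy class of $\R\circ Q$. Since by hypothesis $Q\notin\C$ on $B_1\setminus B_{1/2}$, both $\R\circ Q$ and $\phi\circ Q$ are well-defined and continuous there, and $\phi\circ Q\geq \phi_0$ on that annulus by definition of $\phi_0$. The goal is to convert these two ingredients, plus the potential term $\frac{1}{\epsilon^2}f(Q)$, into a $|\ln\epsilon|$-type lower bound.

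On every circle $\partial B_r$ with $r\in(1/2,1)$, the loop $(\R\circ Q)|_{\partial B_r}$ lies in $\N\cong\mathbb{R}P^2$ and is continuously homotopic to the non-trivial loop $(\R\circ Q)|_{\partial B_1}$, hence it represents the non-trivial class in $\pi_1(\N)\cong\mathbb{Z}/2\mathbb{Z}$. A direct computation using the parameterisation $\nn\in\mathbb{S}^2\mapsto s_*(\nn\otimes\nn-\tfrac{1}{3}\id)$ shows that the immersion $\mathbb{S}^2\to\N$ is, up to the factor $s_*\sqrt{2}$, a local isometry onto its image, so the shortest non-trivial loop in $\N$ (a half great-circle connecting antipodal points on $\mathbb{S}^2$) has length $\pi s_*\sqrt{2}$. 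Combining this with Cauchy--Schwarz on the circle yields $\int_{\partial B_r}|\partial_\tau(\R\circ Q)|^2\,d\mathcal{H}^1\geq (\pi s_*\sqrt{2})^2/(2\pi r)=\pi s_*^2/r$. Multiplying by $\tfrac12\phi_0^2$ and integrating over $(1/2,1)$ only gives the bounded constant $\tfrac{\pi s_*^2\phi_0^2}{2}\ln 2$, which does not yet produce the logarithmic factor.

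The heart of the proof is therefore to extend this slice-wise estimate down to radii of order $\epsilon$, where the assumption $Q\notin\C$ need not hold. I would follow a Jerrard--Sandier/Canevari-type selection of good radii. Using the inequality $f(Q)\geq \gamma_1\dist(Q,\N)^2$ near $\N$ and $f(Q)\geq C|Q|^4$ far from $\N$ (Proposition \ref{prop:prop_f}), together with an averaging argument over $r\in(c\epsilon,1/2)$, one isolates a subset $G_\epsilon$ of large logarithmic measure, namely $\int_{G_\epsilon}dr/r\geq |\ln\epsilon|-C$, such that on every $r\in G_\epsilon$ the circle $\partial B_r$ lies uniformly away from $\C$, $\phi\circ Q\geq \phi_0-o(1)$ holds along $\partial B_r$, and $(\R\circ Q)|_{\partial B_r}$ is well-defined. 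A connectedness-in-$r$ argument, propagating from the outer annulus, then guarantees that the topological class on each such circle remains the non-trivial element of $\pi_1(\N)$, so the length/Cauchy--Schwarz estimate of the previous paragraph applies verbatim on $\partial B_r$ for $r\in G_\epsilon$.

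Integrating the slice bound $\int_{\partial B_r}\tfrac12|\nabla' Q|^2\geq \tfrac{\pi s_*^2\phi_0^2}{2r}-o(1/r)$ over $r\in G_\epsilon\cup(1/2,1)$ via the coarea formula, one obtains $\int_{B_1}\tfrac12|\nabla' Q|^2+\tfrac{1}{\epsilon^2}f(Q)\geq \tfrac{\pi s_*^2\phi_0^2}{2}|\ln\epsilon|-C$, which is precisely the claim with $\kappa_*=s_*^2\pi/2$. The main technical obstacle is the construction of $G_\epsilon$: one must quantitatively trade the energy cost paid via $\frac{1}{\epsilon^2}f(Q)$ against the measure of bad circles on which $Q$ approaches $\C$, and, crucially, propagate the non-triviality of the $\pi_1(\N)$-class through the possibly bad intermediate annuli between consecutive elements of $G_\epsilon$. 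This is the step that genuinely uses the potential term and distinguishes the full Ginzburg--Landau-type lower bound from the purely geometric estimate valid on the fixed annulus $B_1\setminus B_{1/2}$.
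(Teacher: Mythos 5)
The paper does not prove this theorem itself: it defers to [Canevari2015a, Proposition 2.5], a Jerrard--Sandier / Bethuel--Brezis--H\'elein-type argument adapted to $\N\cong\mathbb{R}P^2$. Your skeleton follows the same strategy, and the constant computation is right: $\nn\mapsto s_*(\nn\otimes\nn-\tfrac13\id)$ multiplies lengths by $s_*\sqrt2$, so a non-contractible loop in $\N$ has length $\geq\pi s_*\sqrt2$, Cauchy--Schwarz on $\partial B_r$ gives $\int_{\partial B_r}|\partial_\tau(\R\circ Q)|^2\,d\mathcal{H}^1\geq\pi s_*^2/r$, and after multiplying by $\tfrac12\phi_0^2$ and integrating $dr$ over $(\epsilon,1)$ one recovers $\kappa_*=\tfrac{\pi}{2}s_*^2$.

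There are, however, two genuine gaps precisely in the parts you label ``the heart of the proof.'' First, the claim that good inner circles satisfy $\phi\circ Q\geq\phi_0-o(1)$ misattributes what the potential controls. By Proposition \ref{prop:phi_close_to_N} combined with $f(Q)\geq\gamma_1\dist^2(Q,\N)$, smallness of $\frac1{\epsilon^2}f(Q)$ forces $\phi\circ Q$ to be \emph{close to $1$}, not $\geq\phi_0$; the two statements agree only when $\phi_0\leq1$. For $\phi_0>1$ (e.g.\ $Q=t(\nn\otimes\nn-\tfrac13\id)$ on $B_1\setminus B_{1/2}$ with $t$ slightly above $s_*$) the inner-slice bound gives only $\approx\pi s_*^2/(2r)<\pi s_*^2\phi_0^2/(2r)$, and the claimed lower bound would require a separate argument trading the excess $f$-energy on the annulus (of order $(\phi_0-1)^2/\epsilon^2$) against the deficit $\kappa_*(\phi_0^2-1)|\ln\epsilon|$; this bookkeeping is not automatic and you do not supply it. Second, the ``connectedness-in-$r$'' propagation of the $\pi_1(\N)$ class from $\partial B_1$ down to a good radius $r\ll\tfrac12$ is not a formality: radii between two good radii may have $Q$ touching $\C$, so $\R\circ Q$ is ill-defined there and there is no a priori homotopy through the intermediate annulus. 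The standard resolution introduces $\rho_\epsilon:=\inf\{r: Q\notin\C\text{ on }B_1\setminus B_r\}$, runs your per-circle bound only on $(\rho_\epsilon,1)$ where the homotopy class is genuinely preserved, and then treats $B_{\rho_\epsilon}$ separately via the potential; in particular one must also make the ``averaging'' selection of $G_\epsilon$ conditional on the potential energy being below a threshold (otherwise the bound holds trivially), a dichotomy your sketch leaves implicit. These two points are exactly where the result goes beyond a fixed-annulus estimate, and they are not closed by the argument as written.
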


The constant $\kappa_*$ can be calculated as in \cite[Lemma 2.9]{Canevari2015a} or \cite[Lemma 1.3.4]{Canevari2015} and is specific for $\N\cong\mathbb{R}P^2$. For other manifolds, there are analogous results with different constants, see \cite{Chiron2004}.
For our purposes, we will use the following version of Theorem \ref{thm:near_singular}.

\begin{corollary} \label{cor:near_singular}
Let $x_0\in \Omega'$ such that $B_\eta(x_0)\subset\Omega'$. Let $Q\in H^1(B_\eta(x_0),\Sym)$ satisfying $Q(x)\notin \C$ for all $x\in B_\eta\setminus B_{\frac{1}{2}\eta}$ and $(\R\circ Q)|_{\partial B_\eta}$ is non-trivial, seen as element of $\pi_1(\N)$. Then, with the same constant $C>0$ as in Theorem \ref{thm:near_singular} 
\begin{align} \label{thm:eq:near_singular}
\int_{B_\eta(x_0)} \frac{1}{2}|\nabla' Q|^2  + \frac{1}{\epsilon^2} f(Q) \dx x \geq \kappa_* \phi_0^2(Q,B_\eta\setminus B_{\frac{1}{2}\eta})\big(|\ln\epsilon| - |\ln\eta|\big)-C\, ,
\end{align} where $\kappa_*=s_*^2\frac{\pi}{2}$.
\end{corollary}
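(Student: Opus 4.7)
The plan is a direct rescaling reduction to Theorem \ref{thm:near_singular}. Define the rescaled field $\tilde Q:B_1\to\Sym$ by $\tilde Q(y) := Q(x_0+\eta y)$. Since $B_\eta(x_0)\subset\Omega'$, we have $\tilde Q\in H^1(B_1,\Sym)$. The hypothesis $Q(x)\notin\C$ on $B_\eta(x_0)\setminus B_{\eta/2}(x_0)$ transfers to $\tilde Q(y)\notin\C$ on $B_1\setminus B_{1/2}$, and because the map $y\mapsto x_0+\eta y$ is a diffeomorphism $\partial B_1\to\partial B_\eta(x_0)$, the loop $(\R\circ\tilde Q)|_{\partial B_1}$ represents the same class in $\pi_1(\N)$ as $(\R\circ Q)|_{\partial B_\eta(x_0)}$; in particular it is non-trivial.

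Next I would perform the change of variables $x=x_0+\eta y$, using $dx=\eta^2\,dy$ and $\nabla_x Q=\eta^{-1}\nabla_y\tilde Q$, to obtain
\begin{equation*}
\int_{B_\eta(x_0)}\frac{1}{2}|\nabla' Q|^2+\frac{1}{\epsilon^2}f(Q)\dx x
\;=\;\int_{B_1}\frac{1}{2}|\nabla'\tilde Q|^2+\frac{1}{\tilde\epsilon^{\,2}}f(\tilde Q)\dx y,
\end{equation*}
where $\tilde\epsilon:=\epsilon/\eta$. Applying Theorem \ref{thm:near_singular} to $\tilde Q$ with parameter $\tilde\epsilon$ yields, with the same constants $\kappa_*=s_*^2\pi/2$ and $C$,
\begin{equation*}
\int_{B_1}\frac{1}{2}|\nabla'\tilde Q|^2+\frac{1}{\tilde\epsilon^{\,2}}f(\tilde Q)\dx y
\;\geq\;\kappa_*\,\phi_0^2(\tilde Q,B_1\setminus B_{1/2})\,|\ln\tilde\epsilon|-C.
\end{equation*}

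Since $\phi$ is a pointwise function of $Q$, we have $\phi_0(\tilde Q,B_1\setminus B_{1/2})=\phi_0(Q,B_\eta(x_0)\setminus B_{\eta/2}(x_0))$. In the relevant regime $0<\epsilon<\eta<1$ we have $\tilde\epsilon\in(0,1)$ and $|\ln\tilde\epsilon|=|\ln\epsilon|-|\ln\eta|$, giving exactly the claimed inequality. The remaining case $\epsilon\geq\eta$ (so that $|\ln\epsilon|-|\ln\eta|\leq 0$) is vacuous because the left-hand side of \eqref{thm:eq:near_singular} is non-negative while the right-hand side is bounded by $-C<0$.

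The argument is essentially mechanical; the only point that requires a moment of care is the invariance of the topological class under rescaling, which however is immediate since the scaling is a homeomorphism onto $\partial B_\eta(x_0)$ and $\R$ commutes with it. No new constants appear, so the conclusion holds with exactly the $C$ provided by Theorem \ref{thm:near_singular}.
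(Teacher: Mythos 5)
Your proof is correct and follows the same rescaling reduction to Theorem \ref{thm:near_singular} as the paper's own proof, applying it to $\tilde Q(y)=Q(x_0+\eta y)$ with effective parameter $\tilde\epsilon=\epsilon/\eta$. (Incidentally, the paper's proof writes $\widetilde\epsilon=\epsilon\eta$ and $\tfrac{1}{\eta^2\epsilon^2}f(\overline{Q})$ in the rescaled integrand, which is a slip: your $\tilde\epsilon=\epsilon/\eta$, i.e.\ a coefficient $\eta^2/\epsilon^2$, is the correct two-dimensional change of variables and the one that actually yields $|\ln\tilde\epsilon|=|\ln\epsilon|-|\ln\eta|$.)
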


\begin{proof}
By translating $\Omega'$ we can assume that $x_0=0$. In order to apply Theorem \ref{thm:near_singular},  we define $\overline{x}=\frac{1}{\eta}x$ and $\overline{Q}(\overline{x})=Q(\eta \overline{x})=Q(x)$. Therefore $\overline{Q}\in H^1(B_1(0),\Sym)$ and verifies the hypothesis of Theorem \ref{thm:near_singular} with $\widetilde{\epsilon}=\epsilon\eta$, i.e.\
\begin{align*}
\int_{B_\eta(x_0)} \frac{1}{2}|\nabla' Q|^2  + \frac{1}{\epsilon^2} f(Q) \dx x &= \int_{B_1(x_0)} \frac{1}{2}|\nabla' \overline{Q}|^2  + \frac{1}{\eta^2\epsilon^2} f(\overline{Q}) \dx \overline{x} \\
&\geq \kappa_* \phi_0^2(\overline{Q},B_1\setminus B_\frac{1}{2})|\ln\widetilde{\epsilon}| - C \\
&\geq \kappa_* \phi_0^2(Q,B_\eta\setminus B_{\frac{1}{2}\eta})\big(|\ln\epsilon| - |\ln\eta|\big) - C \, .
\end{align*}
\end{proof}

\subsection{Lower bound away from singularities}

In the previous subsection we introduced the functions $\phi$ and $\phi_0$. The following Proposition shows that we can uniformly bound these functions if $Q$ is close to $\N$.

\begin{proposition} \label{prop:phi_close_to_N}
Let $\dist(Q,\N)\leq \delta$ on $\omega\subset\Omega$. Then 
$$ 1-\frac{2\sqrt{3}}{s_*}\delta \leq (\phi\circ Q)(x) \leq 1+\frac{2\sqrt{3}}{s_*}\delta\, . $$
\end{proposition}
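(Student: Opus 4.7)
The plan is a direct algebraic computation, reusing the decomposition and the distance estimate that already appeared in the proof of Lemma \ref{lem:continuity_g}. First I would use Proposition \ref{prop:prop_sym0} to write
$$ Q = s\left(\nn\otimes\nn - \tfrac{1}{3}\id\right) + sr\left(\mm\otimes\mm - \tfrac{1}{3}\id\right) $$
with $s=s(Q)\geq 0$, $r=r(Q)\in[0,1]$, and $\nn,\mm$ orthonormal eigenvectors of $Q$. By Proposition \ref{prop:prop_sym0}(3), the nearest point projection onto $\N$ is then $\R(Q)=s_*(\nn\otimes\nn-\tfrac{1}{3}\id)$. By definition of $\phi$,
$$ \phi(Q) - 1 = \frac{s(1-r)-s_*}{s_*} = \frac{(s-s_*) - sr}{s_*}, $$
so the task reduces to bounding the two quantities $|s-s_*|$ and $|sr|$ by multiples of $\dist(Q,\N)$.

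Next I would reproduce the short computation from Lemma \ref{lem:continuity_g}. Expanding $|Q-\R(Q)|^2$ using the orthogonality relations
$$ \bigl|\nn\otimes\nn-\tfrac{1}{3}\id\bigr|^2 = \bigl|\mm\otimes\mm-\tfrac{1}{3}\id\bigr|^2 = \tfrac{2}{3}, \qquad \bigl(\nn\otimes\nn-\tfrac{1}{3}\id\bigr) : \bigl(\mm\otimes\mm-\tfrac{1}{3}\id\bigr) = -\tfrac{1}{3}, $$
gives
$$ \dist^2(Q,\N) = |Q-\R(Q)|^2 = \tfrac{2}{3}(s-s_*)^2 + \tfrac{2}{3}(sr)^2 - \tfrac{2}{3}sr(s-s_*) \geq \tfrac{1}{3}(s-s_*)^2 + \tfrac{1}{3}(sr)^2, $$
where the last inequality uses $2ab \leq a^2+b^2$ applied to $a = s-s_*$ and $b = sr$. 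In particular both $|s-s_*|$ and $|sr|$ are at most $\sqrt{3}\,\dist(Q,\N) \leq \sqrt{3}\,\delta$.

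Finally, inserting these two inequalities into the previous display for $\phi(Q)-1$ yields
$$ |\phi(Q) - 1| \leq \frac{|s-s_*| + |sr|}{s_*} \leq \frac{2\sqrt{3}\,\delta}{s_*}, $$
which is exactly the claimed bound. There is no real obstacle here: the whole statement is essentially an algebraic by-product of the orthogonal decomposition in Proposition \ref{prop:prop_sym0} together with the elementary inequality used above, and no smallness hypothesis on $\delta$ is needed (the bound holds for every $\delta\geq 0$, though of course it is only informative when $\delta \ll s_*$).
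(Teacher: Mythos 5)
Your proof is correct and follows essentially the same route as the paper: the same eigenvalue decomposition from Proposition \ref{prop:prop_sym0}, the same expansion of $|Q-\R(Q)|^2$ using the inner products of $\nn\otimes\nn-\tfrac{1}{3}\id$ and $\mm\otimes\mm-\tfrac{1}{3}\id$, the same elementary inequality to extract $|s-s_*|,|sr|\le\sqrt{3}\,\delta$, and the same final estimate via $s_*(\phi(Q)-1)=s-s_*-sr$. The only difference is cosmetic (you spell out the orthogonality relations and remark that no smallness of $\delta$ is needed), so there is nothing to add.
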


\begin{proof}
Let $Q\in \Sym$ with $\dist(Q,\N)\leq\delta$. In other words, $|Q-\R(Q)|\leq \delta$, since $\R$ is the nearest-point projection onto $\N$.
We use Proposition \ref{prop:prop_sym0} to write
$$ Q = s\left( \left(\nn\otimes\nn - \frac{1}{3}\id\right) + r\left(\mm\otimes\mm - \frac{1}{3}\id\right)\right) \quad\text{and}\quad \R(Q) = s_*\left(\nn\otimes\nn - \frac{1}{3}\id\right)\, ,  $$
for $\nn,\mm$ orthonormal eigenvectors of $Q$, $s>0$ and $r\in [0,1)$. We can estimate
\begin{align*}
\delta^2 &\geq |Q - \R(Q)|^2 = \left| (s-s_*)(\nn\otimes\nn - \frac{1}{3}\id) + sr (\mm\otimes\mm - \frac{1}{3}\id) \right|^2 \\
&= \frac{2}{3}|s-s_*|^2 + \frac{2}{3}|sr|^2 - \frac{2}{3}sr(s-s_*) \\
&\geq \frac{1}{3}|s-s_*| + \frac{1}{3}|sr|^2\, .
\end{align*}
This implies $|sr|\leq \sqrt{3}\delta$ and $|s-s_*|\leq \sqrt{3}\delta$. Therefore, using $\phi(Q)=s_*^{-1}(s-sr)$, we have 
\begin{align*}
|s_*(\phi(Q) - 1)| &= |s-s_*-sr| \leq 2\sqrt{3}\delta\, .
\end{align*}
\end{proof}

Away from singularities the main contribution to the energy comes from the Dirichlet term and the external field since $Q_\epsilon$ is close to $\N$. More precisely, we only need the energy in radial direction, i.e.\ $|\nabla Q_\epsilon|^2$ can be replaced by $|\partial_r Q_\epsilon|^2$ and the problem becomes essentially one dimensional. We formalize this thoughts by introducing the following auxiliary problem as in \cite{Alama2017}
\begin{equation} \label{def:one_dim_aux_problem}
\inf_{\underset{n_3(r_1)=a,\: n_3(r_2)=b}{n_3\in H^1([r_1,r_2],[0,1])}} \int_{r_1}^{r_2} \frac{s_*^2 |n_3'|^2}{1-n_3^2} + \sqrt{\frac{3}{2}}(1-n_3^2) \dx r
\end{equation}
for $0\leq r_1\leq r_2\leq \infty$, $a,b\in [-1,1]$ and name the infimum $I(r_1,r_2,a,b)$. Note, that this is equivalent to minimizing $\int \left(\frac{1}{2}|\partial_r Q|^2 + g(Q)\right) \dx r$ for uniaxial $Q$ subject to suitable boundary conditions. For the infimum we have the following result.

\begin{lemma} \label{lem:radial_turning}
Let $0\leq r_1\leq r_2\leq r_3\leq \infty$ and $a,b,c\in [-1,1]$. Then
\begin{enumerate}
\item $I(r_1,r_2,a,b) + I(r_2,r_3,b,c)\geq I(r_1,r_3,a,c)$.
\item $I(r_1,r_2,-1,1)\geq 2\sqrt[4]{24}s_*$.
\item Let $\theta\in [0,\pi]$. Then 
$$ I(0,\infty,\cos(\theta),\pm 1) = \sqrt[4]{24}s_*(1\mp \cos(\theta))\, . $$
Furthermore, the minimizer $\nn(r,\theta)$ of $I(0,\infty,\cos(\theta),1)$ is $C^1$ and $|\partial_\theta \nn|^2,|\partial_r \nn|^2,|\nn-\ee_3|$ decay exponentially as $r\rightarrow\infty$. The minimizer can be explicitly expressed as
\begin{equation*}
\nn(r,\theta) = \begin{pmatrix} \sqrt{1-\nn_3^2} \\ 0 \\ \nn_3 \end{pmatrix}\, , \quad \nn_3(r,\theta) = \frac{A(\theta)-\exp(-\sqrt[4]{24}/s_* r)}{A(\theta)+\exp(-\sqrt[4]{24}/s_* r)}\, , \quad A(\theta)=\frac{1+\cos(\theta)}{1-\cos(\theta)}\, .
\end{equation*}
\end{enumerate}
\end{lemma}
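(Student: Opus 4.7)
My plan is to treat the three items in increasing order. Item (1) is a routine concatenation: for any $\varepsilon > 0$, pick $\varepsilon$-minimizers $n_3^{(1)}$ and $n_3^{(2)}$ on $[r_1,r_2]$ and $[r_2,r_3]$ respectively with matching trace $b$ at $r_2$; their concatenation lies in $H^1([r_1, r_3])$ and has energy at most $I(r_1,r_2,a,b) + I(r_2,r_3,b,c) + 2\varepsilon$, and sending $\varepsilon \to 0$ yields (1).

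For (2) and the lower bound in (3) simultaneously, I would apply the AM-GM inequality to the two summands of the integrand to obtain the pointwise bound
\begin{equation*}
\frac{s_*^2\, |n_3'|^2}{1 - n_3^2} + \sqrt{\tfrac{3}{2}}\,(1 - n_3^2) \ \geq\ 2\, s_*\, \sqrt[4]{\tfrac{3}{2}}\, |n_3'|,
\end{equation*}
with equality if and only if $s_*^2\, (n_3')^2 = \sqrt{3/2}\,(1 - n_3^2)^2$. Integrating, using $|b - a| \leq \int |n_3'|\, dr$ and the identity $2\sqrt[4]{3/2} = \sqrt[4]{24}$, I obtain $I(r_1, r_2, a, b) \geq \sqrt[4]{24}\, s_*\, |b - a|$. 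Taking $(a, b) = (-1, 1)$ gives (2); taking $(a, b) = (\cos\theta, \pm 1)$ with $r_1 = 0$, $r_2 = \infty$ gives the lower bounds $\sqrt[4]{24}\, s_*\,(1 \mp \cos\theta)$ asserted in (3).

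To finish (3) I would exhibit the matching minimizer. Equality in AM-GM forces the autonomous first-order ODE $s_*\, n_3' = \sqrt[4]{3/2}\,(1 - n_3^2)$ (with the $+$ sign for the case $n_3(\infty) = 1$), which is precisely the first integral of the Euler--Lagrange equation normalized so that the conserved Hamiltonian vanishes at infinity. Separating variables gives $\tfrac{1}{2}\ln\tfrac{1 + n_3}{1 - n_3} = \tfrac{\sqrt[4]{24}}{2 s_*}\, r + C_0$, and the boundary condition $n_3(0) = \cos\theta$ determines $C_0 = \tfrac{1}{2} \ln A(\theta)$, producing the announced closed form. Its minimality is automatic: under equipartition the integrand reduces to $2\sqrt{3/2}\,(1 - n_3^2)$ and the change of variables $dr = s_*\, dn_3/(\sqrt[4]{3/2}\,(1 - n_3^2))$ collapses the integral to $\sqrt[4]{24}\, s_*\,(1 - \cos\theta)$, matching the AM-GM lower bound; the case $n_3(\infty) = -1$ follows from the symmetry $n_3 \mapsto -n_3$, i.e.\ $\theta \mapsto \pi - \theta$. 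Smoothness ($C^\infty$) and the decay estimates are read off the closed form: $1 - n_3(r,\theta) \sim \tfrac{2}{A(\theta)}\, e^{-\sqrt[4]{24}\, r/s_*}$ as $r \to \infty$, and differentiation in $r$ or $\theta$ preserves this exponential rate, so $|\partial_r \nn|^2$, $|\partial_\theta \nn|^2$ and $|\nn - \ee_3|$ all decay at the same rate.

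The only mild subtlety I anticipate is admissibility of the explicit profile at the degenerate endpoint $n_3 = \pm 1$, where $1/(1-n_3^2)$ is singular; one must check $(n_3')^2/(1 - n_3^2) \in L^1$. This is not an obstacle in practice because the equipartition identity gives $(n_3')^2/(1 - n_3^2) = \sqrt{3/2}\,(1 - n_3^2)/s_*^2$, which is integrable thanks to the exponential decay of $1 - n_3^2$ obtained from the closed form.
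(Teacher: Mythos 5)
Your arguments for parts (1) and (2) coincide, up to cosmetic rewriting, with the paper's proof: (1) is the same concatenation of $\varepsilon$-minimizers, and your AM--GM bound
\[
\frac{s_*^2|n_3'|^2}{1-n_3^2}+\sqrt{\tfrac{3}{2}}(1-n_3^2)\ \ge\ \sqrt[4]{24}\,s_*\,|n_3'|
\]
is precisely the paper's $X^2+Y^2\ge 2XY$ estimate with the same choice of $X,Y$.

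Where you genuinely depart from the paper is part (3): the paper simply delegates it to Lemma~3.4 and Remark~3.5 of the cited reference (Alama--Bronsard--Lamy 2017), whereas you give a self-contained derivation via the equality case of AM--GM. Your argument is correct: equality forces the first-order ODE $s_*n_3'=\sqrt[4]{3/2}\,(1-n_3^2)$, separation of variables and the condition $n_3(0)=\cos\theta$ give exactly the closed form with $A(\theta)=(1+\cos\theta)/(1-\cos\theta)$, and the equipartition identity collapses the energy integral to $\sqrt[4]{24}\,s_*(1-\cos\theta)$, matching the lower bound. The symmetry $n_3\mapsto -n_3$ (equivalently $\theta\mapsto\pi-\theta$) handles the case $n_3(\infty)=-1$. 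The payoff of your route is a self-contained proof that also makes the optimal profile and its admissibility transparent; the paper's citation is shorter but opaque. Two small caveats, neither fatal: (i) at the degenerate endpoints $\theta\in\{0,\pi\}$ the formula collapses to a constant and, for $\theta=\pi$, fails the boundary condition at infinity (the infimum is attained only asymptotically), so strictly speaking the explicit minimizer is for $\theta\in(0,\pi)$; (ii) your claim that $|\partial_r\nn|^2$, $|\partial_\theta\nn|^2$ and $|\nn-\ee_3|$ decay ``at the same rate'' is slightly loose --- the $\nn_1=\sqrt{1-\nn_3^2}$ component decays like $e^{-\sqrt[4]{24}\,r/(2s_*)}$ rather than $e^{-\sqrt[4]{24}\,r/s_*}$ --- but all quantities do decay exponentially, which is all the lemma asserts.
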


\begin{proof}
The first part follows directly from definition, since any function that is admissible for  $I(r_1,r_2,a,b)$ combined with one for $I(r_2,r_3,b,c)$ is admissible for $I(r_1,r_3,a,c)$. 
For the second claim, we use the inequality $X^2+Y^2\geq 2XY$ with $X=s_*|\nn_3'|/\sqrt{1-\nn_3^2}$ and $Y=\frac{1}{2}\sqrt[4]{24}\sqrt{1-\nn_3^2}$ to get
$$ I(r_1,r_2,-1,1) \geq \sqrt[4]{24}s_*\int_{r_1}^{r_2} |\nn_3'| \dx r \geq \sqrt[4]{24}s_*\left|\int_{r_1}^{r_2} \nn_3' \dx r\right| = \sqrt[4]{24}s_*|\nn_3(r_2)-\nn_3(r_1)| = 2\sqrt[4]{24}s_* \, . $$
The third part follows from Lemma 3.4 and Remark 3.5 in \cite{Alama2017}.
\end{proof}

In the definition of the approximate energy $E_\epsilon^{2d}$ we did not include the term modelling the external field since this would complicate the proof of Theorem \ref{thm:finite_set_of_sing} and in particular the proof of the regularity for $Q_\epsilon$. Nevertheless, it is desirable to completely replace $\widetilde{Q_\epsilon}$ in $\Eex$ by $Q_\epsilon$ in order to work only with the regularized sequence. The following Lemma shows that on bounded sets that exclude singularities the replacement of $g(\widetilde{Q_\epsilon})$ by $g(\R\circ {Q_\epsilon})$ can be justified.
Although for our needs only this substitution is necessary, one could also take $g(Q_\epsilon)$ instead of $g(\R\circ Q_\epsilon)$ (see Remark \ref{rem:replace_Qtilde_Q_in_g}).

\begin{lemma}\label{lem:g_tildeQ_g_Q} 
There exists a constant $K>0$ such that for all $\sigma,\delta>0$ it exists $\epsilon_3>0$ such that on $\omega'\subset\Omega_\sigma'\cap B_R(0)$ for $1< R<\infty$ and $\dist(Q_\epsilon,\N)<\delta$ on $\omega'$, $\epsilon\leq \epsilon_3$ it holds
$$ \int_{\omega'} \rho \frac{1}{\eta^2}g(\widetilde{Q_\epsilon}) \dx x \geq \int_{\omega'} \rho \frac{1}{\eta^2}g(\R\circ Q_\epsilon) \dx x - K R^\frac{3}{2} \epsilon^{\gamma/4}\, . $$
\end{lemma}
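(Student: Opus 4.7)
The strategy is to decompose $\omega'$ into a region $A$ on which $\widetilde{Q_\epsilon}$ stays inside a tubular neighborhood of $\N$ (where $g$ and $\R$ are controlled by Lemma~\ref{lem:continuity_g}), and a small complementary ``bad'' region $B$ where only the uniform boundedness of $g$ is used. Fix $\delta''>0$ with $\delta+\delta''<\delta_1$, where $\delta_1$ is the constant of Lemma~\ref{lem:continuity_g}, and set
\[
A := \{x\in\omega' : |\widetilde{Q_\epsilon}(x)-Q_\epsilon(x)|\leq \delta''\}, \qquad B := \omega'\setminus A.
\]
On $A$, the triangle inequality yields $\dist(\widetilde{Q_\epsilon},\N)\leq \delta+\delta''<\delta_1$, so Lemma~\ref{lem:continuity_g} applies. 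Writing
\[
g(\widetilde{Q_\epsilon})-g(\R\circ Q_\epsilon) = \bigl(g(\widetilde{Q_\epsilon})-g(\R\circ\widetilde{Q_\epsilon})\bigr) + \bigl(g(\R\circ\widetilde{Q_\epsilon})-g(\R\circ Q_\epsilon)\bigr),
\]
one bounds the first bracket by $C\dist(\widetilde{Q_\epsilon},\N)\leq C(\dist(Q_\epsilon,\N)+|\widetilde{Q_\epsilon}-Q_\epsilon|)$ via Lemma~\ref{lem:continuity_g}, and the second by $C|\widetilde{Q_\epsilon}-Q_\epsilon|$ using the smoothness (hence Lipschitz continuity) of $g$ on $\N$ (where $|Q|=\sqrt{2/3}\,s_*>0$) together with that of the nearest-point projection $\R$ on the tubular neighborhood.

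\textbf{Integration.} Multiplying by $\rho$, integrating on $A$, and applying Cauchy--Schwarz with $\int_{\omega'}\rho\dx x\leq \pi R^3$, the task reduces to controlling the $L^2(\rho\dx x)$ norms of $\dist(Q_\epsilon,\N)$ and of $\widetilde{Q_\epsilon}-Q_\epsilon$. The former is bounded by $\gamma_1^{-1}\int\rho f(Q_\epsilon)\dx x$ thanks to Proposition~\ref{prop:prop_f}(3) (after possibly shrinking $\delta$ so that $\delta<\delta_0$); using Remark~\ref{rem:def_Q_eps}(2) ($Q_\epsilon$ has smaller energy than $\widetilde{Q_\epsilon}$ in every term except the $g$-contribution) together with the energy bound~\eqref{eq:energy_bound}, this yields $\int\rho f(Q_\epsilon)\dx x\leq C\epsilon^2(1+|\ln\epsilon|)$. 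The latter is directly given by Remark~\ref{rem:def_Q_eps}(3) as $\int\rho|\widetilde{Q_\epsilon}-Q_\epsilon|^2\dx x\leq C\epsilon^\gamma(1+|\ln\epsilon|)$. Combined, we obtain
\[
\int_A \rho\,|g(\widetilde{Q_\epsilon})-g(\R\circ Q_\epsilon)|\dx x \leq C R^{3/2}\epsilon^{\gamma/2}(1+|\ln\epsilon|)^{1/2}.
\]
On $B$, the uniform bound $0\leq g\leq \sqrt{2/3}+1$ (Proposition~\ref{prop:prop_g}) gives $|g(\widetilde{Q_\epsilon})-g(\R\circ Q_\epsilon)|\leq C$, while Chebyshev and $\rho\geq\sigma$ on $\Omega_\sigma'$ yield $|B|\leq C\epsilon^\gamma(1+|\ln\epsilon|)/(\sigma(\delta'')^2)$, hence $\int_B\rho\,|g(\widetilde{Q_\epsilon})-g(\R\circ Q_\epsilon)|\dx x\leq CR\epsilon^\gamma(1+|\ln\epsilon|)$, which is strictly subdominant.

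\textbf{Conclusion and main obstacle.} Using $g\geq 0$ to convert the pointwise absolute-value bound into the desired one-sided estimate, we arrive at
\[
\int_{\omega'}\rho\bigl(g(\widetilde{Q_\epsilon})-g(\R\circ Q_\epsilon)\bigr)\dx x \geq -CR^{3/2}\epsilon^{\gamma/2}(1+|\ln\epsilon|)^{1/2}.
\]
Dividing by $\eta^2$ and invoking the regime $\eta|\ln\epsilon|\to\beta$, which gives $\eta^{-2}\leq C_\beta|\ln\epsilon|^2$ for $\epsilon$ small, the lower bound becomes $-C_\beta R^{3/2}\epsilon^{\gamma/2}|\ln\epsilon|^{5/2}$; since $\epsilon^{\gamma/4}|\ln\epsilon|^{5/2}\to 0$, this is $\geq -KR^{3/2}\epsilon^{\gamma/4}$ once $\epsilon\leq\epsilon_3$. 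The main obstacle is the singular behaviour of $g$ at $Q=0$, which precludes a global Lipschitz comparison between $g(\widetilde{Q_\epsilon})$ and $g(Q_\epsilon)$: the partition $\omega'=A\cup B$ is essential, and the $L^2$ control of order $\epsilon^{\gamma/2}$ makes $|B|$ so small that the coarse $L^\infty$ bound on $g$ suffices on $B$, with ample margin to absorb the $\eta^{-2}\lesssim|\ln\epsilon|^2$ factor.
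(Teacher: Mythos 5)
Your proof is correct and takes essentially the same approach as the paper: triangle inequality through the intermediate term $g(\R\circ\widetilde{Q_\epsilon})$, Lipschitz estimates from Lemma~\ref{lem:continuity_g} and Proposition~\ref{prop:prop_f}(3) on the region where $\widetilde{Q_\epsilon}$ is controlled, Cauchy--Schwarz against the $L^2$ bounds on $f$ and on $|Q_\epsilon-\widetilde{Q_\epsilon}|$, and a coarse $L^\infty$ bound on the small exceptional set whose measure is controlled via Chebyshev and the penalization term. The only (cosmetic) difference is that you partition $\omega'$ into good/bad sets before applying the triangle inequality rather than after, and express the good-set threshold in terms of $|\widetilde{Q_\epsilon}-Q_\epsilon|$ rather than $\dist(\widetilde{Q_\epsilon},\N)$; both are interchangeable given $\dist(Q_\epsilon,\N)<\delta$.
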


\begin{proof}
By triangle inequality we can estimate
\begin{align*}
\int_{\omega'} \rho\frac{1}{\eta^2}g(\widetilde{Q_\epsilon}) \dx x 
{}\geq& \int_{\omega'}  \rho\frac{1}{\eta^2}g({\R\circ Q_\epsilon}) \dx x \\
&- \frac{1}{\eta^2}\int_{\omega'} \rho|g(\widetilde{Q_\epsilon}) - g(\R\circ\widetilde{Q_\epsilon})| \dx x - \frac{1}{\eta^2}\int_{\omega'} \rho |g(\R\circ\widetilde{Q_\epsilon}) - g(\R\circ Q_\epsilon)| \dx x \, .
\end{align*}
Compared to the announced estimate, it remains to show that the last two terms are bounded by $KR^\frac{3}{2}\epsilon^{\gamma/2}$. For the last term we use Proposition \ref{prop:prop_g} and Cauchy-Schwarz to get
\begin{align*}
\frac{1}{\eta^2}\int_{\omega'} \rho|g(\R\circ\widetilde{Q_\epsilon}) - g(\R\circ Q_\epsilon)| \dx x &\leq C\frac{1}{\eta^2}\int_{\omega} |\widetilde{Q_\epsilon} - Q_\epsilon| \dx x \leq \frac{C}{\eta^2} \Vert \widetilde{Q_\epsilon} - Q_\epsilon\Vert_{L^2({\omega})} |\omega|^\frac{1}{2} \, ,
\end{align*}
where $\omega\subset\mathbb{R}^3$ is the set defined through rotating $\omega'$ around the $\ee_3-$axis. The last expression is seen to be bounded by $\frac{C}{\eta^2}\epsilon^{\gamma/2} |\ln\epsilon|^\frac{1}{2} R^\frac{3}{2}$ in view of the energy bound.

For the other term, we want to use Proposition \ref{prop:prop_f} to derive a similar bound, but this requires $\dist(\widetilde{Q_\epsilon},\N)<2\delta$. So on the set $U'=\{x\in \omega'\sd \dist(\widetilde{Q_\epsilon}(x),\N)<2\delta \}$ we get
\begin{align*}
\frac{1}{\eta^2}\int_{U'} \rho|g(\widetilde{Q_\epsilon}) - g(\R\circ \widetilde{Q_\epsilon})| \dx x &\leq C\frac{1}{\eta^2}\int_{U'} \rho \:\dist(\widetilde{Q_\epsilon},\N) \dx x\\
&\leq \frac{C}{\eta^2} \int_{U'} \rho \sqrt{f(\widetilde{Q_\epsilon})}\dx x \leq \frac{C}{\eta^2} \left(\int_\omega f(\widetilde{Q_\epsilon})\dx x\right)^\frac{1}{2} |\omega|^\frac{1}{2} \, ,
\end{align*}
resulting in an upper bound of $\frac{C}{\eta^2}\epsilon |\ln\epsilon|^\frac{1}{2} R^\frac{3}{2}$.
We claim that $|\omega'\setminus U'|\leq \frac{C}{\eta}\epsilon^\gamma$.  Since on $\omega'\setminus U'$ one has $|Q_\epsilon-\widetilde{Q_\epsilon}| \geq \dist(\widetilde{Q_\epsilon},\N) - \dist(Q_\epsilon,\N)\geq \delta$, we infer from the energy bound that
$$ C\geq \int_{\omega'\setminus U'} \rho \frac{\eta}{2\epsilon^\gamma}|Q_\epsilon - \widetilde{Q_\epsilon}|^2\dx r \geq \frac{\eta \delta^2 \sigma}{2\epsilon^\gamma} |\omega'\setminus U'|\, , $$
which proves the claim. Since $g(\widetilde{Q_\epsilon}),g(\R\circ\widetilde{Q_\epsilon})\leq \sqrt{\frac{3}{2}}$ are bounded, we can deduce that $\frac{1}{\eta^2}\int_{\omega'\setminus U'} \rho |g(\widetilde{Q_\epsilon}-g(\R\circ\widetilde{Q_\epsilon})|\dx r\leq CR\epsilon^{\gamma/4} $ for $\eta,\epsilon$ small enough. 
\end{proof}

\begin{remark} \label{rem:replace_Qtilde_Q_in_g}
With the same assumptions as in Lemma \ref{lem:g_tildeQ_g_Q}, there exists another constant $\tilde{K}>0$ such that
$$ \int_{\omega'}  \rho\frac{1}{\eta^2}g(\widetilde{Q_\epsilon}) \dx x \geq \int_{\omega'} \rho\frac{1}{\eta^2}g(Q_\epsilon) \dx x - \tilde{K} R^\frac{3}{2} \epsilon^{\gamma/4}\, . $$
This follows as in the proof of Lemma \ref{lem:g_tildeQ_g_Q} if we introduce the additional term $-\frac{1}{\eta^2}\int_{\omega'} \rho |g(\R\circ Q_\epsilon)-g(Q_\epsilon)|\dx x$ and use $\dist(Q_\epsilon,\N)<\delta$.
\end{remark}

Now we can combine all our previous results to prove the lower bound of Theorem \ref{thm:main}. The idea consists in replacing $\widetilde{Q_\epsilon}$ by its approximation $Q_\epsilon$ (except for the term with $g$) and use the equivariance to write the energy as a two dimensional integral. By Theorem \ref{thm:finite_set_of_sing} we can exclude regions in $\Omega_\sigma'$ where $Q_\epsilon$ is far from $\N$. Extending the sets if necessary, we can assure that the union has vanishing measure in the limit $\eta,\epsilon\rightarrow 0$ and that the complement $\Omega_0$ is simply connected. The scaling of $\eta$ and $\epsilon$ allows to apply Corollary \ref{cor:near_singular} to each of these extended sets where the boundary datum is nontrivial. The expression we calculate here can later be identified as the perimeter term in $\E_0$.
In the simply connected complement $\Omega_0$ there exists a lifting $\nn^\epsilon$ of $Q_\epsilon$ which fulfils the compactness (\ref{thm:main:cptness}). In order to apply Lemma \ref{lem:radial_turning} to the rays in $\Omega_0$, we want to replace $g(\widetilde{Q_\epsilon})$ by $g(\R\circ Q_\epsilon)$. This can be accomplished using Lemma \ref{lem:g_tildeQ_g_Q}   on bounded sets. In order to get the lower bound, consider the rays with high energy (that we can estimate easily) and those with low energy where we need to be more precise about their behaviour far from the boundary $\partial\Omega$.
Using a diagonal sequence, we can pass to the limit $\sigma\rightarrow 0$.

\begin{proof}[Proof of the lower bound (\ref{eq:lower_bound}) of Theorem \ref{thm:main}]

Let $\delta,\sigma>0$ be arbitrary. We define $Q_\epsilon$ as in (\ref{def:reg_seq_low_energy}) and extend it rotationally equivariant. 
From Theorem \ref{thm:finite_set_of_sing} for $\epsilon\leq \epsilon_0$ we know that there exists a finite set $X_\epsilon$ of singular points $x_1^\epsilon,...,x_{N_\epsilon}^\epsilon$ in $\Omega_\sigma'$. 
In a first step, we suppose that all these points are included in the set $\Omega_{R}'=\Omega_\sigma'\cap B_{R}(0)$.

Since $\Omega_{R}'$ is bounded, there exists another finite set $X$, such that each sequence $x_j^\epsilon$ converges (up to a subsequence) to a point in $X$ as $\epsilon,\eta\rightarrow 0$. Note that there may be more than one sequence converging to the same point in $X$ and we a priori only know that $X\subset \overline{\Omega'\cap B_R}$. 

We first assume that the set $X$ is contained in $\Omega_\sigma'\setminus\partial\Omega$. Since $\eta|\ln\epsilon|\rightarrow \beta\in (0,\infty)$ we know that $\epsilon\leq C \exp(-\frac{1}{\eta})$. Assume that $\eta$ is small enough such that $2\lambda_0\epsilon\leq \frac{1}{2}\eta$.

For $x_i\in X$ we define $\widetilde{\Omega_i^\epsilon}{'} = \mathrm{conv}\{ B_{\eta}(x_i)\cup \{0\} \}\cap\Omega'$. If $x_i$ is the only point of the set $X$ that lies on the ray from $0$ through $x_i$ we define $\Omega_i^\epsilon{'} \defi\widetilde{\Omega_i^\epsilon}{'}$. If $x_j$ for $j\in J\subset I$ define the same ray, i.e.\ lie on a common line through $0$, then we set $\Omega_j^\epsilon{'}\defi\bigcup_{k\in J} \widetilde{\Omega_k^\epsilon}{'}$. After relabelling, we end up with a finite number $N$ of sets $\Omega_k^\epsilon{'}$, $k=1,...,N$. We define $\Omega_0^\epsilon{'}\defi \Omega_\sigma'\setminus \bigcup_{k=1}^N \Omega_k^\epsilon{'}$ (see  Figure \ref{fig:away_from_sing}).
Since all points in $X_\epsilon$ converge to some point  in $X$, we may assume that $\epsilon$ is small enough such that 
\begin{equation}\label{proof_main_eq_lambEps_Ome_k_}
\bigcup_{x\in X_\epsilon}B_{\lambda_0\epsilon}(x)\subset \bigcup_{x\in X}B_{2\lambda_0\epsilon}(x) \subset \bigcup_{k=1}^N \Omega_k^\epsilon{'}\subset \Omega_\sigma'\, .
\end{equation}
We drop the $\epsilon$ in the notation of $\Omega_k^\epsilon{'}$ for simplicity and call $\Omega_k$ the three dimensional set defined by rotating $\Omega_k'$ around the $\ee_3-$axis.

Using (\ref{def:reg_seq_low_energy}) and Remark \ref{rem:def_Q_eps} we can write
\begin{equation} \label{proof_main_eq_low_0_k}
\begin{split}
\eta\: \E_\epsilon(\widetilde{Q_\epsilon}) &\geq \eta\int_\Omega \frac{1}{2}|\nabla Q_\epsilon|^2 + \frac{1}{\epsilon^2} f(Q_\epsilon) + \frac{1}{\eta^2} g(\widetilde{Q_\epsilon}) + \frac{1}{2\epsilon^\gamma}|Q_\epsilon-\widetilde{Q_\epsilon}|^2 \dx x \\
&= \eta\int_0^{2\pi}\int_{\Omega'} \rho\:\left( \frac{1}{2}|\nabla Q_\epsilon|^2 + \frac{1}{\epsilon^2} f(Q_\epsilon) + \frac{1}{\eta^2} g(\widetilde{Q_\epsilon}) + \frac{1}{2\epsilon^\gamma}|Q_\epsilon-\widetilde{Q_\epsilon}|^2 \right) \dx \rho\dx z \dx\varphi \\
&\geq \eta\int_{\Omega_0} \frac{1}{2}|\nabla Q_\epsilon|^2 + \frac{1}{\eta^2} g(\widetilde{Q_\epsilon}) + \frac{1}{2\epsilon^\gamma}|Q_\epsilon-\widetilde{Q_\epsilon}|^2 \dx x  \\
&\:\:\:\:+ \eta\sum_{k=1}^N \int_0^{2\pi}\int_{\Omega_k'} \rho\:\left( \frac{1}{2}|\nabla Q_\epsilon|^2 + \frac{1}{\epsilon^2} f(Q_\epsilon) \right) \dx \rho\dx z \dx\varphi\, .
\end{split}
\end{equation}

For $x\in \Omega_0$ we know by Theorem \ref{thm:finite_set_of_sing} that $\dist(Q_\epsilon(x),\N)\leq \delta$. Since $\Omega_0'$ and thus $\Omega_0$ is simply connected there exist liftings $\pm\nn^\epsilon:\Omega_0\rightarrow\mathbb{S}^2$ such that 
$$ s_*\left(\nn^\epsilon\otimes\nn^\epsilon-\frac{1}{3}\id\right) = \R\circ Q_\epsilon  \quad \text{and}\quad \left\Vert s_*\left(\nn^\epsilon\otimes\nn^\epsilon-\frac{1}{3}\id\right) - Q_\epsilon\right\Vert_{\infty}\leq\delta\quad\text{ on }\Omega_0\, . $$
In particular, $Q_\epsilon(x)\in\Sym\setminus\C$ for all $x\in\partial\Omega_k'$ for all $k=1,...,N$.
Let $\M\subset \{1,...,N\}$ be the set of elements $k\in\{1,...,N\}$ such that $(\R\circ Q_\epsilon)|_{\partial \Omega_k'}$ is non-trivial as an element of $\pi_1(\N)$. We then want to apply Corollary \ref{cor:near_singular}. By Proposition \ref{prop:phi_close_to_N} we can estimate $\phi_0$ from below and get
\begin{equation} \label{proof_main_eq_low_near_sing}
\begin{split}
\eta\sum_{k=1}^N \int_{\Omega_k'} \rho\left(\frac{1}{2}|\nabla Q_\epsilon|^2 + \frac{1}{\epsilon^2} f(Q_\epsilon)\right) \dx \rho\dx z  \geq{} & \eta \sum_{k=1}^N \inf_{\Omega_k'}\rho \int_{\Omega_k'} \left(\frac{1}{2}|\nabla Q_\epsilon|^2 + \frac{1}{\epsilon^2} f(Q_\epsilon)\right) \dx \rho\dx z \\
\geq{}& \eta\sum_{k\in \M} \kappa_* \phi_0^2(Q_\epsilon,B_{\eta}(x_k)\setminus B_{\frac{1}{2}\eta}(x_k))\frac{\rho_k^\epsilon-\eta}{|x_k^\epsilon|} |\ln\epsilon|\eta \\
&- C\phi_0^2(Q_\epsilon,B_{\eta}(x_k)\setminus B_{\frac{1}{2}\eta}(x_k))\:\eta|\ln\eta| - C\: \eta \\
\geq{} & \left(1-\frac{2\sqrt{3}}{s_*}\delta\right)^2 \sum_{k\in \M} \frac{\rho_k^\epsilon-\eta}{|x_k^\epsilon|} \frac{\pi}{2}s_*^2\eta|\ln(\epsilon)| \\
&- C\left(1+\frac{2\sqrt{3}}{s_*}\delta\right)^2 \eta|\ln\eta| - C \eta\, .
\end{split}
\end{equation}

Before estimating the energy coming from $\Omega_0$, we need an additional information, namely we want to show that $\nn^\epsilon(r\omega)$ approaches $+\ee_3$ and $-\nn^\epsilon(r\omega)$ approximates $-\ee_3$ (or vice versa) as $r\rightarrow \infty$ for a.e. $\omega\in\mathbb{S}^2$. However, it will be enough for our analysis to just show that $\nn^\epsilon$ is close to either $+\ee_3$ or $-\ee_3$ up to some factor times $\delta$.
To start with, we show that the vector $\nn^\epsilon(r\omega)$ for $r\rightarrow \infty$ is close to $+\ee_3$ or $-\ee_3$ almost everywhere. 
By (\ref{def:reg_seq_low_energy}) and the energy bound we know, that for a.e. $\omega\in\mathbb{S}^2$ the integral 
\begin{equation}\label{proof_main_lifting_cv_e3_energy}
\int_R^\infty \frac{\eta}{2\epsilon^\gamma}|\widetilde{Q_\epsilon}(r\omega) - Q_\epsilon(r\omega)|^2 + \frac{1}{\eta} g(\widetilde{Q_\epsilon}(r\omega)) \dx r < \infty\, .
\end{equation}
We argue by contradiction, i.e.\ assume that there exists some $\omega\in\mathbb{S}^2$ satisfying (\ref{proof_main_lifting_cv_e3_energy}) such that $\limsup_{r\rightarrow\infty}||\nn_3^\epsilon(r\omega)|-1|>4 \mathfrak{C}\delta$ for a $\mathfrak{C}>0$ to be specified later. This implies that there exists a sequence $r_k$ such that $r_k\rightarrow\infty$ as $k\rightarrow\infty$ and $|\nn_3^\epsilon(r_k\omega)|< 1-8\mathfrak{C}\delta$ for all $k\in\mathbb{N}$ or in other words $|Q_\epsilon-s_*(\ee_3\otimes\ee_3-\frac{1}{3}\id)|> 4\delta$ for a suitably chosen $\mathfrak{C}$ (A calculation shows that $\mathfrak{C}\geq \frac{5}{4\sqrt{2} s_*}$ is sufficient). By Lipschitz continuity of $Q_\epsilon$
this implies $|Q_\epsilon-s_*(\ee_3\otimes\ee_3-\frac{1}{3}\id)|> 2\delta$ for all $r\in I_k:=(r_k-\frac{2\epsilon \mathfrak{C}\delta}{C},r_k+\frac{2\epsilon\mathfrak{C}\delta}{C})$. Now suppose that for some point in $I_k$ it holds that $|\widetilde{Q_\epsilon}-Q_\epsilon|<\frac{\delta}{4}$. Then $\dist(\widetilde{Q_\epsilon},\N)\leq |\widetilde{Q_\epsilon}-Q_\epsilon| + \dist(Q_\epsilon,\N)\leq \frac{5}{4}\delta$ and 
$$ \left|\widetilde{Q_\epsilon} - s_*\bigg(\ee_3\otimes\ee_3-\frac{1}{3}\id\bigg)\right| \geq \left|{Q_\epsilon} - s_*\bigg(\ee_3\otimes\ee_3-\frac{1}{3}\id\bigg)\right|  - |\widetilde{Q_\epsilon}-Q_\epsilon| > 2\delta - \frac{\delta}{4} \geq \frac{7}{4}\delta \, . $$ 
This implies that $g(\widetilde{Q_\epsilon})\geq g_\mathrm{min}>0$  for such points in $I_k$, where we used $g_\mathrm{min}=\min\big\{ g(Q) \sd Q\in\Sym\, , \dist(Q,\N)\leq\frac{5}{4}\delta \, ,|Q - s_*(\ee_3\otimes\ee_3-\frac{1}{3}\id)|\geq\frac{7}{4}\delta \big\}>0$. With this estimate in mind it becomes clear that we have the lower bound
$$ \frac{\eta}{2\epsilon^\gamma}|\widetilde{Q_\epsilon} - Q_\epsilon|^2 + \frac{1}{\eta}g(\widetilde{Q_\epsilon}) \geq \min\bigg\{ \frac{1}{\eta}g_\mathrm{min},\, \frac{\eta}{2\epsilon^\gamma}\Big(\frac{\delta}{4}\Big)^2 \bigg\}> 0 \quad \text{on } I_k\, . $$
Integrating over $I_k$ and summing over disjoint intervals yields a contradiction to (\ref{proof_main_lifting_cv_e3_energy}).
This implies that either $\limsup_{r\rightarrow\infty}\nn_3^\epsilon(r\omega)\geq 1-4\mathfrak{C}\delta$ or $\liminf_{r\rightarrow\infty}\nn_3^\epsilon(r\omega)\leq -1+4\mathfrak{C}\delta$. Indeed, $\nn_3^\epsilon(r\omega)$ cannot alternate between $\pm 1$ since by continuity this yields a contradiction for $\delta$ small enough such that $4\mathfrak{C}\delta \ll \frac{1}{2}$.
Next, consider the lifting $\nn^\epsilon$ and suppose that there exist directions $\omega_+,\omega_-\in\mathbb{S}^2$ such that $\nn^\epsilon(r\omega_+)$ is close to $+\ee_3$ (resp. $\nn^\epsilon(r\omega_-)$ close to $-\ee_3$) as $r\rightarrow\infty$. Since our previous analysis holds a.e., we can assume that the angle between $\omega_+$ and $\omega_-$ is smaller than $\pi$ and that $\omega_\pm$ are not parallel to $\ee_3$. Let $v=\omega_+ - \omega_-$ and $w=\omega_+ + \omega_-$. We estimate the energy in new coordinates $(r,s)$ in the segment between the rays defined through $\omega_+$ and $\omega_-$ and apply Lemma \ref{lem:g_tildeQ_g_Q} to get 
\begin{align*}
C &\geq \int_{R+1}^{\tilde{R}} \int_{-r|v|/2}^{r|v|/2} \rho\left(\frac{\eta}{2} \Big|\nabla'Q_\epsilon\Big(r\frac{v}{|v|}+s\frac{w}{|w|}\Big)\Big|^2 + \frac{1}{\eta} g\Big(\widetilde{Q_\epsilon}\Big(r\frac{v}{|v|}+s\frac{w}{|w|}\Big)\Big) \right) \dx s \dx r \\
&\geq C(1-C\delta)^2 \int_{(R+1)}^{\tilde{R}} \int_{r|v|/2}^{r|v|/2} \rho \left(\eta s_*^2\Big|\frac{v}{|v|}\cdot\nabla' \nn^\epsilon\Big|^2 + \frac{1}{\eta} \sqrt{\frac{3}{2}}(1-\nn_3^\epsilon) - C\delta \right) \dx s \dx r - K\epsilon^{\gamma/4}\tilde{R}^\frac{3}{2}\, .
\end{align*}
Lemma \ref{lem:radial_turning} gives the lower bound  $\int_{-r|v|/2}^{r|v|/2}\left(\eta s_*^2 |\frac{v}{|v|}\cdot\nabla' \nn^\epsilon|^2 + \frac{1}{\eta} \sqrt{\frac{3}{2}}(1-\nn_3^\epsilon) \right) \dx s \geq 2\sqrt[4]{24}s_*-C\delta$. Using $\rho\geq r\min\{\sin(\theta_+),\sin(\theta_-)\}$ for $\theta_\pm$ being the angular coordinate of $\omega_\pm$, we end up with
\begin{align*}
C &\geq C(1-C\delta)^2 \int_{R+1}^{\tilde{R}} r (2\sqrt[4]{24}s_* - C\delta) \dx r - K\epsilon^{\gamma/2}\tilde{R}^\frac{3}{2} \geq C_R(1-\delta-\epsilon^{\gamma/2})\tilde{R}^\frac{3}{2} > 0\, ,
\end{align*}
provided $\epsilon,\delta>0$ small enough.
Sending $\tilde{R}$ to infinity, we get a contradiction. Hence, $\nn^\epsilon$ has to approach either $+\ee_3$ or $-\ee_3$ a.e. and thus we can distinguish the two liftings by their asymptotics far from $\partial\Omega$.

We now introduce sets $F_{\sigma,\epsilon},\widetilde{F_{\sigma,\epsilon}}$ which we use later to prove the compactness result. First choose one of the two possible liftings $\nn^\epsilon\in C^0(\Omega_0,\mathbb{S}^2)$. Without loss of generality we choose the lifting such that $\nn^\epsilon(r\omega)$ is close to $+\ee_3$ as $r\rightarrow\infty$. 
The boundary conditions (\ref{eq:bc}) imply that $\nn^\epsilon(\omega)=\pm \nu(\omega)$, where $\nu$ is the outward normal on $\mathbb{S}^2$ for all $\omega\in\partial\Omega_0\cap\mathbb{S}^2$.
We define $F_{\sigma,\epsilon}\defi \{ \omega\in\mathbb{S}^2\cap\partial\Omega_0\sd \nn^\epsilon(\omega)\cdot \nu(\omega) = 1 \}$. Conversely, $\widetilde{F_{\sigma,\epsilon}}$ is then given by $\widetilde{F_{\sigma,\epsilon}} = \{ \omega\in\mathbb{S}^2\cap\partial\Omega_0\sd \nn^\epsilon(\omega)\cdot \nu(\omega) = -1 \}$. The remaining part of $\mathbb{S}^2\cap\Omega_\sigma$ is denoted $S_{\sigma,\epsilon} = (\mathbb{S}^2\cap\Omega_\sigma)\setminus(F_{\sigma,\epsilon}\cup \widetilde{F_{\sigma,\epsilon}})=\bigcup_{k\geq 1} (\mathbb{S}^2\cap\partial\Omega_k)$. Note that the sets $F_{\sigma,\epsilon}$, $\widetilde{F_{\sigma,\epsilon}}$ and $S_{\sigma,\epsilon}$ are rotationally symmetric with respect to the $\varphi$ coordinate. Since the $\theta-$angular size of all $\Omega_k$ converges to zero (i.e.\ $|S_{\sigma,\epsilon}|\rightarrow 0$ as $\epsilon\rightarrow 0$) and $\mathbb{S}^2\cap\Omega_\sigma$ is compact, we get that (up to extracting a subsequence) $\chi_{F_{\sigma,\epsilon}}$ (resp. $\chi_{\widetilde{F_{\sigma,\epsilon}}}$) converges pointwise to a characteristic function $\chi_{F_\sigma}$ (resp. $\chi_{\widetilde{F_\sigma}}$). Note that also $\Vert s_*(\nn^\epsilon\otimes\nn^\epsilon-\frac{1}{3}\id)-\widetilde{Q_\epsilon}\Vert_{L^2(\Omega_0)}$ converges to zero by Remark \ref{rem:def_Q_eps} and the definition of $\nn^\epsilon$.

As a last step, it remains the energy estimate on $\Omega_0$. We split the integral over $\Omega_0$ in \eqref{proof_main_eq_low_0_k} in several parts: For $\omega\in F_{\sigma,\epsilon}$ such that the energy on the ray in direction $\omega$ is large, i.e.\ $\int_1^\infty \frac{\eta}{2}|\nabla Q_\epsilon|^2 + \frac{\eta}{\epsilon^2} f(Q_\epsilon) + \frac{1}{\eta} g(\widetilde{Q_\epsilon}) + \frac{\eta}{2\epsilon^\gamma} |Q_\epsilon-\widetilde{Q_\epsilon}|^2 \dx r \geq 2\sqrt[4]{24} s_*$, we can use Lemma \ref{lem:radial_turning} that implies
\begin{equation}\label{proof_main_eq_highE_rays}
\int_1^\infty \frac{\eta}{2}|\nabla Q_\epsilon|^2 +\frac{\eta}{\epsilon^2} f(Q_\epsilon) + \frac{1}{\eta} g(\widetilde{Q_\epsilon}) + \frac{\eta}{2\epsilon^\gamma} |Q_\epsilon-\widetilde{Q_\epsilon}|^2 \dx r \geq 2\sqrt[4]{24} s_* \geq I(1,\infty,\nu_3(\omega),+1)\, .
\end{equation}
Analogously, for points $\omega\in \widetilde{F_{\sigma,\epsilon}}$ with energy greater than $2\sqrt[4]{24}s_*$ we use $I(1,\infty,\nu_3(\omega),-1)$ as a lower bound.
Let's consider the set of points $\omega\in\mathbb{S}^2\cap\partial\Omega_0$ such that the energy on the ray through $\omega$ is smaller than $2\sqrt[4]{24}s_*$. 
We claim that there exists a constant $\overline{C}>0$ independent of $\omega$ and a radius $R_{\eta,\omega}\in (R-\overline{C}\eta,R]$ such that $||\nn^\epsilon_3(R_{\eta,\omega}\omega)|-1|\leq 8\mathfrak{C}\delta\ll 1$. Indeed, the bound implies that $|\{ r\in (1,R)\sd |\widetilde{Q}-Q|>\delta \}|\leq 4\sqrt[4]{24}\delta^{-2}\epsilon^{\gamma/4}$ and 
if $||\nn^\epsilon_3(R_{\eta,\omega}\omega)|-1|> 8\mathfrak{C}\delta$ on $(R-\overline{C}\eta,R]\setminus \{ r\in (1,R)\sd |\widetilde{Q}-Q|>\delta \}$ then on this set $|\widetilde{Q_\epsilon}-s_*(\ee_3\otimes\ee_3-\frac{1}{3}\id)|\geq \delta$. Hence for $\overline{C}$ large enough this contradicts $2\sqrt[4]{24}\geq \int \frac{1}{\eta}g(\widetilde{Q_\epsilon}) \dx r \geq (R-(R-\overline{C}\eta))\frac{C\delta}{\eta}. $
In order to conclude that the energy from $1$ to $R_{\eta,\omega}$ is (up to some small contributions of size $\delta$) close to $I(1,\infty,\nu_3(\omega),\pm 1)$ we need to show that for $\omega\in F_{\sigma,\epsilon}$ the vector $\nn^\epsilon(R_{\eta,\omega})$ is close to $+\ee_3$ and not $-\ee_3$ (and vice versa for $\omega\in\tilde{F_{\sigma,\epsilon}}$). 
Again we argue by contradiction, i.e.\ we assume that $|\nn^\epsilon(R_{\eta,\omega}) + \ee_3|\leq 8\mathfrak{C}\delta$. We subdivide the ray in direction $\omega$ from $R$ to infinity into segments of length $1$, identified with the intervals $J_k=[k,k+1]$ for the radial variable,  for integers $k\geq R$. On every segment, the energy bound on the ray implies the existence of two points $a_k,b_k\in J_k$ with $|a_k-k|\leq\overline{C}\eta$, $|b_k-(k+1)|\leq\overline{C}\eta$ such that $||\nn^\epsilon_3(a_k)|-1|\leq 8\mathfrak{C}\delta$, $||\nn^\epsilon_3(b_k)|-1|\leq 8\mathfrak{C}\delta$. Since we assumed $\nn^\epsilon(R_{\omega,\eta})$ close to $-\ee_3$ and $\nn^\epsilon$ approaches $+\ee_3$ for $r\rightarrow \infty$, there exists some integer $k\geq R$ such that $|\nn^\epsilon_3(a_k)+1|\leq 8\mathfrak{C}\delta$, $|\nn^\epsilon_3(b_k)-1|\leq 8\mathfrak{C}\delta$. As before we see that the set where $|\widetilde{Q_\epsilon}-Q_\epsilon|>\delta$ is of size $C \epsilon^\gamma$ and that changes of $Q_\epsilon$ can only be of size $\epsilon^\gamma$ due to the energy bound on this ray. Together with Lemma \ref{lem:continuity_g} this implies
$$ \int_{J_k} \frac{\eta}{2}|\nabla Q_\epsilon|^2 + \frac{1}{\eta} g(\widetilde{Q_\epsilon}) \dx r \geq I(k,k+1,\nn_3^\epsilon(a_k),\nn_3^\epsilon(b_k)) - C(\mathfrak{C}+1+\overline{K}\epsilon^{\gamma/4})\delta \geq 2\sqrt[4]{24}s_* - C\delta\, , $$
where $\overline{K}>0$ is the constant coming from replacing $g(\widetilde{Q_\epsilon})$ by $g(Q_\epsilon)$ in the spirit of Lemma \ref{lem:g_tildeQ_g_Q} on the ray through $\omega$.
In order to show that for $\delta$ and $\epsilon$ small enough this contradicts the assumption of the ray having energy smaller than $2\sqrt[4]{24}s_*$, we prove that the energy coming from the segment $[0,R]$ has to be positive with a uniform lower bound. Since $\omega\in F_{\sigma,\epsilon}\subset\partial\Omega_\sigma$ one can show as in 2. in Lemma \ref{lem:radial_turning} that on such a  ray $ \int_1^R \frac{\eta}{2}|\nabla Q_\epsilon|^2 + \frac{1}{\eta}g(\widetilde{Q_\epsilon})\dx r  \geq \sqrt[4]{24}s_*(\frac{1}{2}\sigma^2 - 8\mathfrak{C}\delta) - \overline{K}\sqrt{R}\epsilon^{\gamma/4}$. So combining this result and the estimate for $J_k$ we get
$$ 2\sqrt[4]{24}s_* \geq 2\sqrt[4]{24}s_* - C\delta + \sqrt[4]{24}s_*\Big(\frac{1}{2}\sigma^2 - 8\mathfrak{C}\delta\Big) - \overline{K}\sqrt{R}\epsilon^{\gamma/4}\, , $$
which yields a contradiction for $\delta,\epsilon$ small enough.
For $\omega\in F_{\sigma,\epsilon}$ we then use the change of variables $r = 1+\eta\tilde{r}$, Proposition \ref{prop:grad_Q_geq_grad_R} and Proposition \ref{prop:phi_close_to_N} to get
\begin{equation}\label{proof_main_eq_lowE_rays}
\begin{split}
\int_1^R \frac{\eta}{2}|\nabla Q_\epsilon|^2 + \frac{1}{\eta} g(\widetilde{Q_\epsilon}) \dx r &= \int_0^{(R-1)/\eta} \frac{1}{2}|\nabla Q_\epsilon|^2 + g(\R\circ{Q_\epsilon}) \dx \tilde{r} - \overline{K}\sqrt{R}\epsilon^{\gamma/4} \\
&\geq (1-C\delta)^2 \int_0^{(R-1)/\eta} \frac{1}{2}|\nabla (\R\circ Q_\epsilon)|^2 + g(R\circ Q_\epsilon) \dx \tilde{r} - C\delta \\
&\geq I(0,(R_{\eta,\omega}-1)/\eta,\nu_3(\omega),\nn_3^\epsilon((R_{\eta,\omega}-1)/\eta))-  C\delta \\
&\geq I(0,(R_{\eta,\omega}-1/\eta,\nu_3(\omega),+1) - C\delta\, .
\end{split}
\end{equation}
So by \eqref{proof_main_eq_highE_rays} and \eqref{proof_main_eq_lowE_rays} we get that for $\omega\in F_{\sigma,\epsilon}$ we have 
$$\int_{1}^\infty \frac{\eta}{2}|\nabla Q_\epsilon|^2 + \frac{1}{\eta} g(\widetilde{Q_\epsilon}) \dx r \geq \min\{ I(0,\infty,\nu_3(\omega),+1), \, I(0,(R_{\eta,\omega}-1/\eta,\nu_3(\omega),+1) - C\delta \}\, .$$ 
Furthermore, by compactness, $\chi_{F_{\sigma,\epsilon}}$ converges point wise a.e. to $\chi_{F_\sigma}$. 
Since $(R_{\eta,\omega}-1)/\eta\rightarrow\infty$ as $\eta\rightarrow\infty$ we can apply Fatou's Lemma to get the energy contribution from $\Omega_0$ related to $F_{\sigma,\epsilon}$ by
\begin{equation*}\label{proof_main_eq_low_far_from_sing}
\begin{split}
 \liminf_{\epsilon,\eta\rightarrow 0}&\int_{F_{\sigma,\epsilon}} \int_1^\infty \frac{\eta}{2}|\nabla Q_\epsilon|^2 + \frac{1}{\eta} g(\widetilde{Q_\epsilon}) \dx r\dx\omega \\
&\geq \int_{\mathbb{S}^2\cap\partial\Omega_0}  \liminf_{\epsilon,\eta\rightarrow 0} \min\{ I(0,\infty,\nu_3(\omega),+1), \, I(0,(R_{\eta,\omega}-1/\eta,\nu_3(\omega),+1) - C\delta \} \chi_{F_{\sigma,\epsilon}}(\omega) \dx\omega \\
&\geq\int_{F_\sigma} I\left(0,\infty,\nu_3(\omega),+1\right) \dx\omega  -  C\delta\, .
\end{split}
\end{equation*}
Now combine this estimate, the analogous result for $\widetilde{F_{\sigma,\epsilon}}$, the formulae for $I(0,\infty,\nu_3(\omega),\pm 1)$ from Lemma \ref{lem:radial_turning} and \eqref{proof_main_eq_low_near_sing} to get 
\begin{equation*}
\begin{split}
 \liminf_{\epsilon,\eta\rightarrow 0} \eta\Eex(\Qex) &{}\geq \int_{F_\sigma} \sqrt[4]{24}s_*(1-\cos(\theta)) \dx\omega + \int_{\widetilde{F_\sigma}} \sqrt[4]{24}s_*(1+\cos(\theta)) \dx\omega \\
&+ \left(1-C\delta\right)^2 \sum_{k\in\M} \frac{\rho_k-\eta}{|x_k|}\pi^2 s_*^2 \beta -C\delta\, ,
\end{split}
\end{equation*}
for the points $x_k=(\rho_k,\theta_k)\in X$.

It remains to show that for all $k\in\M$, the point $x_k/|x_k|$ corresponds to a jump between $F_\sigma$ and $\widetilde{F_\sigma}$. For this it is enough to show that the orientation of $\nn^\epsilon$ relative to the normal on $\partial\Omega$ changes when following $\partial\Omega_k'\cap\Omega'$ for all $k\in\M$. So let $k\in\M$ and consider the curve $\Gamma:\partial\Omega_k'\rightarrow\mathbb{S}^2$ defined by $\nn^\epsilon|_{\partial\Omega_k'}$. By definition of $\M$, the curve is non-trivial in $\pi_1(\N)$, i.e.\ $\Gamma$ jumps an odd number of times from one vector to its antipodal vector on the sphere. Hence, the orientation has to change.
In the limit $\epsilon,\eta\rightarrow 0$, this implies that
$$ 2\pi\sum_{k\in\M} \frac{\rho_k}{|x_k|} = |D\chi_{F_\sigma}|(\mathbb{S}^2\cap\{\rho>\sigma\})\, . $$

This implies our result in the case $X_\epsilon,X\subset(\Omega'\cap B_R(0))\setminus \partial\Omega$. 

We now explain the changes in our construction if there are some $x_i\in X\cap\mathbb{S}^2$. Basically, we use the same construction as before, but we need to take care that the lower bound involving Corollary \ref{cor:near_singular} stays applicable. To see this, we extend the map $Q_\epsilon$ outside of $\Omega$ using the boundary values. 
 We define 
$$ \overline{Q_\epsilon}(x) = \begin{cases} Q_\epsilon(x) & x\in B_\eta(x_i)\cap\Omega\, , \\
s_*\Big(\frac{x}{|x|}\otimes \frac{x}{|x|} - \frac{1}{3}\id \Big) & x\in B_\eta(x_i)\cap B_1(0)\, . \end{cases} $$
Then $f(\overline{Q_\epsilon})=0$ and $|\nabla\overline{Q_\epsilon}|^2,g(\overline{Q_\epsilon})\leq C$ on $B_\eta(x_i)\cap B_1(0)$, i.e.\
$$ \int_{B_\eta(x_i)\cap B_1(0)}\frac{1}{2}|\nabla \overline{Q_\epsilon}|^2 + \frac{1}{\epsilon^2} f(\overline{Q_\epsilon}) + \frac{1}{\eta^2} g(\overline{Q_\epsilon}) \dx x \leq C_1\, . $$
So if $(\R\circ Q_\epsilon)|_{\partial\Omega'_i}$ is non-trivial as element of $\pi_1(\N)$, we can apply Corollary \ref{cor:near_singular} to the extension $\overline{Q_\epsilon}$, i.e.\
\begin{align*}
\eta\int_{B_\eta(x_i)\cap\Omega'} \frac{1}{2}|\nabla {Q_\epsilon}|^2 + \frac{1}{\epsilon^2} f({Q_\epsilon}) \dx x &\geq \eta\int_{B_\eta(x_i)\cap\mathbb{R}^2} |\nabla' \overline{Q_\epsilon}|^2 + \frac{1}{\epsilon^2} f(\overline{Q_\epsilon}) \dx x - \eta\:C_1 \\
&\geq \left(1-\frac{2\sqrt{3}}{s_*}\delta\right)^2 \frac{\pi}{2} s_*^2 \eta|\ln\epsilon| - C\:\eta|\ln\eta| - C\:\eta\, .
\end{align*}
If $(\R\circ Q_\epsilon)|_{\partial\Omega'_i}$ is trivial, then we just estimate as before, using that the energy is non-negative.

It remains one last case. Assume that there is a point $x_k^\epsilon\in X_\epsilon$ such that $|x_k^\epsilon|\rightarrow\infty$ as $\epsilon\rightarrow 0$.
This causes two modifications to our previous results:
This time, we define $\widetilde{\Omega_k^\epsilon}{'}=\mathrm{conv}\{B_\eta(x_k^\epsilon)\cup\{0\}\}\cap\Omega'$.  Doing so, we risk to exclude a region from $\Omega_0$ that is too large for proving the compactness, namely when we define the set $\omega_\eta$ afterwards. But in fact this is not really a difficulty for two reasons:
First, it is possible to extend $\nn^\epsilon$ continuously in $\widetilde{\Omega_k^\epsilon}{'}\setminus \widehat{\Omega_k^\epsilon{'}}$, with $\widehat{\Omega_k^\epsilon{'}}=(B_\eta(x_k^\epsilon)\cup[0,x_k^\epsilon])\cap\Omega'$, where $[0,x_k^\epsilon]$ is the line segment between the points $0$ and $x_k^\epsilon$.
Second, in order to conclude that also the measure of $\widehat{\Omega_k^\epsilon}$ is bounded, we need to show that $\rho_k^\epsilon$ cannot grow to infinity. To see this, note that $x_k^\epsilon\in \Omega_\sigma$ and by applying Proposition \ref{prop:bound_grad_Q_eps_eta_log_dist} one gets from the energy bound that $\rhomin(x_k^\epsilon,\epsilon^\alpha)$ is indeed bounded.
All estimates for the lower bound that we have done before stay valid in this setting.

So far, we have established the inequality
\begin{equation}\label{proof_main_eq_low_liminf_E_sigma}
\begin{split}
\liminf_{\eta,\xi\rightarrow 0} \eta\Eex(\Qex) &\geq (1-C\delta)^2 \frac{\pi}{2} s_*^2 \beta |D\chi_{F_\sigma}|(\mathbb{S}^2\cap \{\rho\geq\sigma\}) \\
&\hspace*{-1cm}+ \int_{F_\sigma} \sqrt[4]{24}s_*(1-\cos(\theta)) \dx\omega + \int_{\widetilde{F_\sigma}} \sqrt[4]{24}s_*(1+\cos(\theta)) \dx\omega - C\delta \, .
\end{split}
\end{equation}

We now define the set $\omega_{\sigma,\epsilon}$ as proxy for the set $\omega_\eta$ from Theorem \ref{thm:main}. Let $\omega_{\sigma,\epsilon}' := \bigcup_{k\geq 1} \widehat{\Omega_k^\epsilon{'}}$, where the sets $\widehat{\Omega_k^\epsilon{'}}=\Omega_k^\epsilon{'}$ for bounded sequences $|x_k^\epsilon|$, and given as in the second construction if $|x_k^\epsilon|$ diverges. This is well defined for $\epsilon$ (and therefore $\eta$) small, depending on $\sigma$ and $\delta$. Recall that since $\eta|\ln\epsilon|\rightarrow \beta\in (0,\infty)$, we have the asymptotic $\eta\sim|\ln\epsilon|^{-1}$. Let $\omega_{\sigma,\epsilon}$ be the corresponding rotational symmetric extended set. Then $|\omega_{\sigma,\epsilon}'|\leq C |\bigcup_{x\in X_\epsilon} B_\eta(x)|\leq  C \eta^2 |X_\epsilon| \leq C \frac{\eta^2}{\delta^4 \sigma^2}$, i.e.\ choosing $\eta$ small we can force the measure of $\omega_{\sigma,\epsilon}'$ to vanish in the limit. Note that this also implies that the measure of $\omega_{\sigma,\epsilon}$ vanishes because we have an upper bound on the $\rho-$component of points in $X_\epsilon$.

We now want to send $\sigma\rightarrow 0$ and choose a diagonal sequence with the properties announced in the Theorem. 
From our previous construction, for a sequence $\sigma_k\searrow 0$ there exist corresponding sequences $\delta_k\searrow 0$, $\eta_k\searrow 0$ and $\epsilon_k\searrow 0$ such that from \eqref{proof_main_eq_low_liminf_E_sigma}
\begin{align*}
\eta\Eex(\Qex) &\geq \frac{\pi}{2} s_*^2 \beta |D\chi_{F_{\sigma_k,\epsilon}}|(\mathbb{S}^2\cap \{\rho\geq\sigma_k\}) \\
&\hspace*{-1cm}+ \int_{F_{\sigma_k,\epsilon}} \sqrt[4]{24}s_*(1-\cos(\theta)) \dx\omega + \int_{\widetilde{F_{\sigma_k,\epsilon}}} \sqrt[4]{24}s_*(1+\cos(\theta)) \dx\omega - \frac{1}{k} \, ,
\end{align*}
and furthermore
$ |\omega_{\sigma_k,\epsilon}|\leq \frac{1}{k}  $,
$ |\mathbb{S}^2\setminus(F_{\sigma_k,\epsilon}\cup\widetilde{F_{\sigma_k,\epsilon}})|\leq \frac{1}{k} $
and
$\Vert \widetilde{Q_\epsilon} - s_*(\nn^\epsilon\otimes\nn^\epsilon - \frac{1}{3}\id)\Vert_{L^2(\Omega_{\sigma_k}\setminus\omega_{\sigma,\epsilon})}\leq \frac{1}{k}$
for $\epsilon\leq \epsilon_k$ and $\eta\leq\eta_k$. The sequences $\epsilon_k$ and $\eta_k$ depend on $\sigma_k$ and $\delta_k$ and are related via $\eta_k|\ln\epsilon_k|\rightarrow \beta$ as $k\rightarrow\infty$.

So we can define the function $\nn^\eta:\Omega\rightarrow\mathbb{S}^2$ announced in the Theorem as $\nn^\eta := \nn^\epsilon$ on $\Omega_{\sigma_k}\setminus\omega_{\eta}$ for $\eta\in (\eta_{k+1},\eta_k)$, $\omega_\eta:= \omega_{\sigma_k,\epsilon}$ and extend it measurably to a map $\Omega\rightarrow\mathbb{S}^2$.  This definition assures that $\nn^\eta\in C^0(\Omega_{\sigma_k}\setminus\omega_\eta,\mathbb{S}^2)$ and the convergence in \eqref{thm:main:cptness} holds. 
Furthermore, we define the set $F_\eta\defi F_{\sigma_k,\epsilon}$ for $\eta\in (\eta_{k+1},\eta_k)$. Then our analysis shows that the sequence $\chi_{F_\eta}$ has the point wise a.e. limit $\chi_F$, for $F=\bigcup_{k>1} F_{\sigma_k}$ since $|\chi_F-\chi_{F_\eta}|\leq |\chi_F - \chi_{F_{\sigma_k}}| + |\chi_{F_{\sigma_k}} - \chi_{F_{\sigma_k,\epsilon}}|$ and the measure of the set on which these two terms are nonzero is smaller than $C\sigma_k^2+\frac{1}{k}$.

This finishes the proof of the first part of Theorem \ref{thm:main} and (\ref{eq:lower_bound}).
\end{proof}

\begin{figure}[H]
\begin{center}
\begin{tikzpicture}[scale=0.85]
\fill[black!10!white] (0.5,-4) -- (11.5,-4) -- (11.5,6) -- (0.5,6) -- cycle;
\fill[black!5!white] (0.5,-4) -- (0,-4) -- (0,6) -- (0.5,6) -- cycle;
\fill[red!20!white] (0,0) -- (4-1.74,4+0.99) -- (4+0.98,4-1.74) -- cycle;
\draw[black] (0,0)--(4-1.74,4+0.99);
\draw[black] (0,0)--(4+0.98,4-1.74);
\draw[fill=red!20!white] (4,4) circle (2cm);
\fill[red] (4,4)  circle[radius=2pt] node[above] {$x_1\in X$};
\fill[red] (4,3)  circle[radius=1pt] node[above] {$y_1\in X_\epsilon$};
\node[red] at (2.4,2.4) {$\Omega_1$};

\fill[blue!30!white] (0,0) -- (6.5,1.94) -- (6.5,-1.94) -- cycle;
\draw[black] (0,0)--(6.5,1.94);
\draw[black] (0,0)--(6.5,-1.94);
\draw[fill=blue!30!white] (7,0) circle (2cm);
\fill[blue] (7,0)  circle[radius=2pt] node[below] {$x_2\in X$};
\fill[blue] (6,0)  circle[radius=1pt] node[above] {$y_2\in X_\epsilon$};
\fill[blue] (8,0)  circle[radius=1pt] node[above] {$y_3\in X_\epsilon$};
\node[blue] at (4.5,0) {$\Omega_2$};

\node[black] at (8,4) {$\Omega_0$};
\draw[black] (0,3)--(0,6);
\draw[black] (0,-3)--(0,-4);
\fill[white] (0,0) -- (90:3cm) arc (90:-90:3cm) -- cycle;
\draw[black] (0,0) -- (90:3cm) arc (90:-90:3cm) -- cycle;

\draw[violet, rotate=80, ->]  (3,0) -- (4,0);
\draw[violet, rotate=75, ->]  (3,0) -- (4,0);
\draw[violet, rotate=70, ->]  (3,0) -- (4,0);
\draw[violet, rotate=67, ->]  (3,0) -- (4,0);

\draw[violet, rotate=-80, ->]  (3,0) -- (2,0);
\draw[violet, rotate=-75, ->]  (3,0) -- (2,0);
\draw[violet, rotate=-70, ->]  (3,0) -- (2,0);
\draw[violet, rotate=-65, ->]  (3,0) -- (2,0);
\draw[violet, rotate=-60, ->]  (3,0) -- (2,0);
\draw[violet, rotate=-55, ->]  (3,0) -- (2,0);
\draw[violet, rotate=-50, ->]  (3,0) -- (2,0);
\draw[violet, rotate=-45, ->]  (3,0) -- (2,0);
\draw[violet, rotate=-40, ->]  (3,0) -- (2,0);
\draw[violet, rotate=-35, ->]  (3,0) -- (2,0);
\draw[violet, rotate=-30, ->]  (3,0) -- (2,0);
\draw[violet, rotate=-25, ->]  (3,0) -- (2,0);
\draw[violet, rotate=-20, ->]  (3,0) -- (2,0);
\draw[violet, rotate=-17, ->]  (3,0) -- (2,0);

\draw[violet, rotate=22, ->]  (3,0) -- (2,0);
\draw[violet, rotate=17, ->]  (3,0) -- (2,0);

\draw[violet, <-]  (11,5.5) -- (11,4.5);
\draw[violet, <-]  (11,3.5) -- (11,2.5);
\draw[violet, <-]  (11,1.5) -- (11,0.5);
\draw[violet, <-]  (11,-0.5) -- (11,-1.5);
\draw[violet, <-]  (11,-2.5) -- (11,-3.5);

\draw[black,dotted] (0.5,-4)--(0.5,6);
\draw[black, <->]  (0,5) -- (0.5,5);
\node at (0.25,5.25) {$\sigma$};
\node at (0.25,4.25) {$Z_\sigma$};

\node[] at (1,2.25) {$F_{\sigma,\epsilon}$};
\draw[<->] (80:2.8cm) arc (80:65:2.8cm);
\node[] at (3,-2.5) {$\widetilde{F_{\sigma,\epsilon}}$};
\draw[<->] (-80:3.2cm) arc (-80:-17:3.2cm);
\draw[<->] (23.5:3.2cm) arc (23.5:17.5:3.2cm);
\end{tikzpicture}
\end{center}
\caption{Construction made in the proof of Theorem \ref{thm:main}. The arrows show a lifting $\nn^\epsilon$. In the region $\Omega_1$ the director field $\nn^\epsilon$ has non-trivial homotopy class, around the region $\Omega_2$, $\nn^\epsilon$ has a trivial one.}
\label{fig:away_from_sing}
\end{figure}
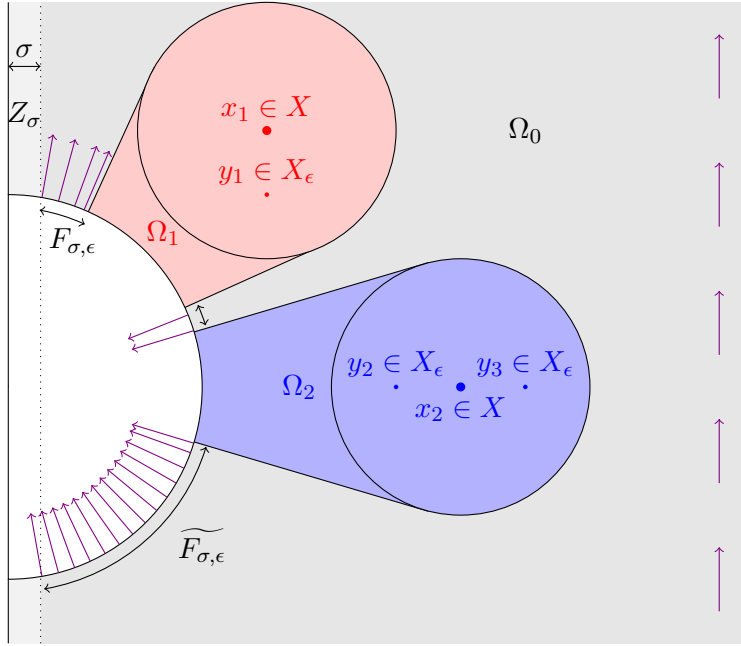

\section{Upper bound}

In this section we are going to prove the upper bound from Theorem \ref{thm:main}, namely (\ref{eq:upper_bound}). Since all functions are rotationally equivariant, it is useful to introduce the two dimensional energy for sets $\omega'\subset\Omega'$
$$ \E_\epsilon^{2D}(Q,\omega') = \int_{\omega'} \rho\left( \frac{1}{2}|\nabla' Q|^2 + \frac{1}{\rho^2}Q_{2\times 2}:Q + \frac{1}{\epsilon^2}f(Q) + \frac{1}{\eta^2} g(Q) \right) \dx\rho \dx\theta\, . $$
First, we show the following Lemma, which gives the upper bound in the case where there are no singularities near the axis $\rho=0$.

\begin{lemma}\label{lem:upper_bound_construction}
Let $\sigma>0$ and $F\subset\mathbb{S}^2$ be be a rotationally symmetric set of finite perimeter such that $\mathbb{S}^2\cap\{\rho\leq \sigma,z>0\},\mathbb{S}^2\cap\{\rho\leq \sigma,z<0\}$ are contained in one of the sets $F,F^c$. Then there exists a rotationally equivariant  sequence of functions $Q_\epsilon\in H^1(\Omega,\Sym)$ such that the compactness claim (\ref{thm:main:cptness}) holds, $\Vert Q_\epsilon\Vert_{L^\infty}\leq \sqrt{\frac{2}{3}}s_*$ and
$$ \limsup_{\epsilon\rightarrow 0} \eta\:\Eex(Q_\epsilon) \leq \E_0(F)\, . $$
\end{lemma}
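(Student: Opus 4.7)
The plan is to build $Q_\epsilon$ by prescribing it separately on three regions whose contributions to $\eta\,\Eex$ match the three terms of $\E_0(F)$ as $\epsilon,\eta\to 0$: (i) a boundary layer of thickness $\sim\eta$ around $\partial\Omega$, minus small tubular neighborhoods of the defect rings, where $Q_\epsilon$ is uniaxial and follows the optimal radial profile from Lemma \ref{lem:radial_turning}; (ii) small tubes of transverse radius $\sim\epsilon$ encircling each latitudinal circle in $\partial F$, where $Q_\epsilon$ realizes a planar half-vortex; and (iii) the constant value $s_*(\ee_3\otimes\ee_3-\frac{1}{3}\id)$ in the complement. The hypothesis on $F$ ensures that $\partial F\cap\mathbb{S}^2$ is a finite union of latitudinal circles at angles $\theta_1,\ldots,\theta_K\in(0,\pi)$ bounded away from the poles, so every defect ring stays at distance $\geq\sigma$ from the $\ee_3$-axis and all estimates are uniform in $\epsilon$.

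For region (i), at each $\omega\in\mathbb{S}^2$ outside a shrinking neighborhood of $\partial F$, set
\[
Q_\epsilon(r\omega)\defi s_*\Bigl(\nn_\omega\bigl(\tfrac{r-1}{\eta}\bigr)\otimes\nn_\omega\bigl(\tfrac{r-1}{\eta}\bigr)-\tfrac{1}{3}\id\Bigr),
\]
where $\nn_\omega$ is the explicit minimizer from Lemma \ref{lem:radial_turning} of $I(0,\infty,\cos\theta(\omega),+1)$ if $\omega\in F$ and of $I(0,\infty,\cos\theta(\omega),-1)$ if $\omega\in F^c$. Since $\nn_\omega$ lies in the $(\ee_\rho,\ee_z)$-plane and depends on $\omega$ only through $\theta(\omega)$, the prescription is rotationally equivariant. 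The change of variables $\tilde r=(r-1)/\eta$ together with the exponential decay from Lemma \ref{lem:radial_turning} shows that, after multiplication by $\eta$, the radial Dirichlet and $g$ contributions produce
\[
\int_F\sqrt[4]{24}s_*(1-\cos\theta)\dx\omega+\int_{F^c}\sqrt[4]{24}s_*(1+\cos\theta)\dx\omega+o(1).
\]
The $f$ term vanishes identically since $Q_\epsilon\in\N$, and the tangential gradients together with the $Q_{2\times2}{:}Q/\rho^2$ contribution are supported in a layer of width $O(\eta)$ with uniformly bounded integrands (using $\rho\geq\sigma$ and smooth $\theta$-dependence of $\nn_\omega$ away from $\theta_k$), yielding $O(\eta)=o(1)$ after multiplication by $\eta$.

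For region (ii), pick a rescaled depth $\tilde r_*=\tilde r_*(\eta)\to\infty$ slowly enough that $|\nn_\omega(\tilde r_*)\mp\ee_3|$ is exponentially small (say $\tilde r_*=|\ln\eta|^{1/2}$), and for each $k$ place a tube $T_\epsilon^{(k)}\subset\Omega$ of transverse radius $\lambda\epsilon$ (with $\lambda$ large but fixed) around the circle $C_k=\{r=1+\eta\tilde r_*,\;\theta=\theta_k\}$. In the meridional plane $T_\epsilon^{(k)}$ is a disk, inside which we set $Q_\epsilon$ equal to the $\epsilon$-rescaling of a fixed planar half-vortex $Q^{\mathrm{vor}}:B_\lambda\to\Sym$, i.e.\ an $\N$-valued map away from the origin representing the nontrivial class of $\pi_1(\N)$ and regularized at the core, of energy $\kappa_*|\ln\lambda|+O(1)=\frac{\pi}{2}s_*^2|\ln\lambda|+O(1)$ on $B_\lambda$; the rescaling gives energy $\frac{\pi}{2}s_*^2|\ln\epsilon|+O(1)$ on $B_{\lambda\epsilon}$, matching the sharp constant of Theorem \ref{thm:near_singular} (see \cite{Canevari2015a} for the construction). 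On $\partial T_\epsilon^{(k)}$ the boundary datum is a half-rotation of the director whose lift matches $+\ee_3$ on the $F$-side arc and $-\ee_3$ on the $F^c$-side arc; since the exterior radial profile has already realized $\pm\ee_3$ up to exponentially small errors at $\tilde r\geq\tilde r_*$, a thin annular collar of width $O(\epsilon)$ joins the two with total energy $o(1/\eta)$. Multiplying the vortex energy by the ring length $2\pi\sin\theta_k+o(1)$ and by $\eta$, summing over $k$, and using $\eta|\ln\epsilon|\to\beta$ together with $|D\chi_F|(\mathbb{S}^2)=\sum_k 2\pi\sin\theta_k$, produces $\frac{\pi}{2}s_*^2\beta\,|D\chi_F|(\mathbb{S}^2)$ in the limit.

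The main obstacle is precisely the consistent choice of this half-rotation boundary datum and the associated collar: the two radial profiles meeting at $C_k$ lift to $+\ee_3$ and $-\ee_3$ respectively, and these antipodal lifts must appear as the two ends of a single loop through $\mathbb{R}P^2$ on $\partial T_\epsilon^{(k)}$, with the rotation distributed so that the collar's energy is negligible compared to $1/\eta$. The set $\omega_\eta$ is then taken as the union of the tubes $T_\epsilon^{(k)}$ together with thin meridional half-disks capping each solid torus at a fixed azimuth, so that $\Omega\setminus\omega_\eta$ becomes simply connected; its $3$-dimensional measure is $O(\epsilon)=o(1)$. The remaining claims in \eqref{thm:main:cptness} follow at once: $Q_\epsilon\in\N$ outside $\omega_\eta$, so a continuous lift $\nn^\eta$ exists on the simply connected complement; $\|Q_\epsilon\|_{L^\infty}\leq\sqrt{2/3}\,s_*$ by construction; and $s_*(\nn^\eta\otimes\nn^\eta-\frac{1}{3}\id)-Q_\epsilon=0$ outside $\omega_\eta$.
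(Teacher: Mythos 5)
The central gap is in region (ii). You claim that the rescaled planar half-vortex $Q^{\mathrm{vor}}$, placed on a disk $B_{\lambda\epsilon}$ with $\lambda$ fixed, has energy $\frac{\pi}{2}s_*^2|\ln\epsilon|+O(1)$. This is false: the two-dimensional Dirichlet energy is scale-invariant, so rescaling a map with energy $\kappa_*\ln\lambda+O(1)$ on $B_\lambda$ down to $B_{\lambda\epsilon}$ preserves that energy; for fixed $\lambda$ the defect disk contributes only $O(1)$. The logarithm $|\ln\epsilon|$ can only be produced by a vortex whose winding spreads over annuli from an inner scale $\sim\epsilon$ to an outer scale $\gg\epsilon$ that is macroscopic relative to $\epsilon$; in the paper this outer scale is $\eta$, and the defect is enclosed in a disk of radius $\eta$ with core regularized at scale $2\epsilon\eta$, giving Dirichlet energy $\frac{\pi}{2}s_*^2|\ln(2\epsilon)|+O(1)$ per meridional cross-section. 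After multiplying by $\eta$, the ring length $2\pi\sin\theta_k$, and using $\eta|\ln\epsilon|\to\beta$, this yields exactly the perimeter term of $\E_0$. With tubes of radius $\lambda\epsilon$, that term drops out of the limit, which would make your upper bound strictly smaller than $\E_0(F)$ and contradict the lower bound — so some other part of the construction (the collar and the unspecified excluded $\theta$-neighborhoods) must be absorbing the $|\ln\epsilon|$ energy, and you have not shown that it is $o(1/\eta)$.

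The second gap is the placement of the tube at depth $r=1+\eta\tilde r_*$ with $\tilde r_*\to\infty$. The two radial profiles you use on the $F$ and $F^c$ sides agree as $Q$-values only at $r=1$ (both equal $Q_b$) and as $r\to\infty$ (both equal $s_*(\ee_3\otimes\ee_3-\frac{1}{3}\id)$); for every intermediate $r$ they differ, and concatenating the two paths gives a nontrivial loop in $\pi_1(\N)$. So the jump across $\{\theta=\theta_k\}$ persists for all $r\in(1,1+\eta\tilde r_*)$, not just at the depth where you placed the tube. Any $H^1$ interpolation across that ray must enclose a half-defect somewhere, and for the reasons above the winding around it necessarily spans radii out to $\sim\eta$, so it cannot be confined to a tube of radius $\lambda\epsilon$ nor to a collar of width $O(\epsilon)$ with energy $o(1/\eta)$. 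The paper addresses exactly this by removing the radial profile in a full $\theta$-neighborhood of width $2\eta$ around each $\theta_k$ (the sets $(\Omega^{\pm}_{\theta_i,\eta})'$ and $D$), interpolating the uniaxial phase to $\ee_3$ on the ray $\theta=\theta_k$ for $r\geq 1+4\eta$, and then placing the vortex ball $B$ of radius $\eta$ centered at $(1+2\eta,\theta_k)$ in the meridional plane. Your outline of region (i) and the optimal radial profile is consistent with the paper's Step 1, but the defect-resolution step needs the $\eta$-scale construction for both the energy estimate and the $H^1$-matching to go through.
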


\begin{proof}
The proof consists in providing an explicit definition for $Q_\epsilon$, generalizing the construction made in \cite{Alama2017}. The idea is the following: 
Let $F\subset \mathbb{S}^2\cap\{\rho\geq \sigma\}$ be rotationally symmetric. Since we assume $F$ to be of finite perimeter, $|D\chi_F|(\mathbb{S}^2\cap\{\rho\geq \sigma\})<\infty$. Let $\overline{F}\cap \overline{F^c}\cap\Omega_\sigma' = \{\theta_0,...,\theta_M\}$ for some $M\in\mathbb{N}$ and $\theta_{i}<\theta_{i+1}$ for all $i=0,...,M-1$.
We now define the map $Q_\epsilon$ on the two dimensional domain $\Omega'$. We divide $\Omega'$ into several regions and define $Q_\epsilon$ on each region separately (see Figure \ref{fig:upper_bound_construction}). After that, we derive the estimates that are needed to ensure that the rotated map $R_\varphi^\top Q_\epsilon R_\varphi$ satisfies the energy estimate. 

Let $\Omega'$ be parametrized by polar coordinates $(r,\theta)$. As usual, we denote by $F'=F\cap\Omega'$ and $F^c{'}=F^c\cap\Omega'$. Note that $\rho=r\sin\theta$.

\textit{Step 1 (Construction on $F_\eta'$ and $(F^c)_\eta'$):}  We define $F_\eta' = F'\setminus \bigcup_{i=0}^M B_{2\eta}(\theta_i)\subset\mathbb{S}^1\subset\Omega'$ and $(F^c)_\eta' = F^c{'}\setminus \bigcup_{i=0}^M B_{2\eta}(\theta_i)\subset\mathbb{S}^1\subset\Omega'$.
For $(r,\theta)\in [1,\infty)\times F_\eta'$ we define
\begin{equation} \label{upper_bound_F}
Q_\epsilon(r,\theta) \defi s_*\left( \nn\otimes\nn -  \frac{1}{3}\id\right) \quad\text{with}\quad \nn(r,\theta) = \begin{pmatrix}
\sqrt{1-\nn_3^2((r-1)/\eta,\theta)} \\ 0 \\ \nn_3((r-1)/\eta,\theta)
\end{pmatrix}\, ,
\end{equation} where $\nn_3$ is given by Lemma \ref{lem:radial_turning}.
Analogously, for $(r,\theta)\in [1,\infty)\times (F^c)_\eta$ we define
\begin{equation} \label{upper_bound_F^c}
Q_\epsilon(r,\theta) \defi s_*\left( \nn\otimes\nn -  \frac{1}{3}\id\right) \quad\text{with}\quad \nn(r,\theta) = \begin{pmatrix}
-\sqrt{1-\nn_3^2((r-1)/\eta,\pi-\theta)} \\ 0 \\ \nn_3((r-1)/\eta,\pi-\theta)
\end{pmatrix}\, .
\end{equation}
Since the defined $Q_\epsilon$ is uniaxial, we have $f(Q_\epsilon)=0$ and by Proposition \ref{prop:prop_g} we can estimate the energy on $\Omega_{F_\eta'}=\{(r,\theta)\sd \theta\in F_\eta'\}$
\begin{align*}
\eta\E^{2D}_\epsilon(Q_\epsilon,\Omega_{F_\eta'}) &= \eta \int_{F_\eta'} \int_1^\infty \rho\:\left( s_*^2|\partial_r \nn|^2 + \frac{s_*^2}{r^2}|\partial_\theta \nn|^2 + \frac{1}{\rho^2}Q_{2\times 2,\epsilon}:Q_\epsilon + \frac{1}{\eta^2} \sqrt{\frac{3}{2}}(1-\nn_3^2) \right) r \dx r\dx \theta \\
&= \int_{F_\eta'} \int_0^\infty \left( s_*^2|\partial_{t} \nn|^2  + \sqrt{\frac{3}{2}}(1-\nn_3^2) \right) (1+\eta t)^2 \sin\theta \dx t\dx \theta\\
&\:\:\:\: + \int_{F_\eta'} \int_0^\infty \frac{\eta^2s_*^2}{(1+\eta t)^2}\left[|\partial_\theta \nn|^2 + \frac{2}{\sin^2\theta} (1-\nn_3^2)\right](1+\eta t)^2 \sin\theta \dx t\dx \theta\, ,
\end{align*} where we set $r = 1+\eta t$ and used that $Q_{2\times 2,\epsilon}:Q = |Q_\epsilon|^2-6s_*(1-\nn_3^2)s_*\nn_3^2=2s_*^2(1-\nn_3^2)$.  Lemma \ref{lem:radial_turning} implies
\begin{equation} \label{upper_bound:F_Energy}
\eta\:\E_\epsilon^{2D}(Q_\epsilon,\Omega_{F_\eta'}) \leq \sqrt[4]{24}s_*\int_{F'} (1-\cos\theta)\sin\theta \dx \theta + C \:\eta\, .
\end{equation} Applying the same steps to $(F^c)_\eta'$, we get
\begin{equation} \label{upper_bound:F^c_Energy}
\eta\:\E_\epsilon^{2D}(Q_\epsilon,\Omega_{(F^c)_\eta'}) \leq \sqrt[4]{24}s_*\int_{F^c{'}} (1+\cos\theta)\sin\theta\dx \theta + C \:\eta\, .
\end{equation}

\textit{Step 2 (Construction on $(\Omega_{\theta_i,\eta}^{+}){'}$ and $(\Omega_{\theta_i,\eta}^{-}){'}$):} Next, we construct $Q_\epsilon$ for $(r,\theta)\in [1+4\eta)\times\bigcup_{i=0}^M B_{2\eta}(\theta_i)$. Without loss of generality, we assume $\theta\in B_{2\eta}(\theta_0)$ and that smaller angles belong to $F'$, while larger values lie in $F^c{'}$. We define $(\Omega_{\theta_0,\eta}^{+}){'}=\{(r,\theta)\sd \theta_0-2\eta\leq\theta\leq \theta_0\}$ and $(\Omega_{\theta_0,\eta}^{-}){'}=\{(r,\theta)\sd \theta_0\leq\theta\leq \theta_0+2\eta\}$.

Since we want $Q_\epsilon$ to have $H^1$-regularity, we need to respect the values of $Q_\epsilon$ that we already constructed at $\theta=\theta_0-2\eta$ and $\theta=\theta_0+2\eta$.  We do this by interpolating between these given values and $s_*(\ee_3\otimes\ee_3-\frac{1}{3}\id)$ at $\theta=\theta_0$. More precisely, for $(r,\theta)\in (\Omega_{\theta_0,\eta}^{+}){'}$ we define
$$ Q_\epsilon(r,\theta) = s_*\left( \nn\otimes\nn -  \frac{1}{3}\id\right) \quad\text{with}\quad \nn(r,\theta) = \begin{pmatrix}
\sin(\phi(r,\theta)) \\ 0 \\ \cos(\phi(r,\theta))
\end{pmatrix}\, , $$
where the phase $\phi$ is given by 
\begin{equation} \label{upper_bound_Ome_2_p}
\phi(r,\theta) = \frac{\theta_0-\theta}{2\eta} \arccos\left( \nn_3\left(r,\theta_0-2\eta\right) \right)\, .
\end{equation}
Similarly, the phase for $(r,\theta)\in (\Omega_{\theta_0,\eta}^{-}){'}$ is given by
\begin{equation}\label{upper_bound_Ome_2_m}
\phi(r,\theta) = -\frac{\theta-\theta_0}{2\eta} \arccos\left( \nn_3\left(r,\pi-(\theta_0+2\eta)\right) \right)\, .
\end{equation}
Note that $Q_\epsilon$ is indeed continuous for $\theta=\theta_0$ and that $Q_\epsilon$ coincides with our previous definition at $\theta=\theta_0-2\eta$ and $\theta=\theta_0+2\eta$. 

Now we calculate the energy coming from the two regions. We assume that $(r,\theta)\in (\Omega_{\theta_0,\eta}^{+}){'}$, the estimates for $(\Omega_{\theta_0,\eta}^{-}){'}$ are similar. Since $Q_\epsilon$ is uniaxial by construction, $f(Q_\epsilon)=0$ and furthermore by Proposition \ref{prop:prop_g}
\begin{align*}
g(Q_\epsilon) &= \sqrt{\frac{3}{2}} (1-\cos^2(\phi(r,\theta))) = \sqrt{\frac{3}{2}} \sin^2(\phi(r,\theta)) \leq \sqrt{\frac{3}{2}} \sin^2(\phi(r,\theta_0-2\eta))\, .
\end{align*}
For the gradient, we note that
\begin{align*}
\frac{1}{2}|\nabla' Q_\epsilon(r,\theta)|^2 &= s_*^2|\partial_r \nn(r,\theta)|^2 + \frac{s_*^2}{r^2} |\partial_\theta \nn(r,\theta)|^2 = s_*^2|\partial_r \phi(r,\theta)|^2 + \frac{s_*^2}{r^2} |\partial_\theta \phi(r,\theta)|^2 \\
&= \left(\frac{\theta-\theta_0}{2\eta}\right)^2 s_*^2|\partial_r \phi(r,\theta_0-2\eta)|^2 + \frac{s_*^2}{4r^2\eta^2}|\phi(r,\theta_0-2\eta)|^2  \\
&\leq s_*^2|\partial_r \nn(r,\theta_0-2\eta)|^2 + \frac{s_*^2}{4r^2\eta^2}|\phi(r,\theta_0-2\eta)|^2\, .
\end{align*}
Note, that for $\eta\rightarrow 0$ the phase $\phi$ stays bounded. Furthermore, all terms decrease exponentially in $r$ by Lemma \ref{lem:radial_turning} and are thus integrable.  Since $\frac{1}{2}|\partial_\varphi Q_\epsilon|^2 = Q_{2\times 2}:Q = 2 s_*^2\sin^2(\phi(r,\theta))$, this term converges to zero exponentially for $r\rightarrow\infty$ and is bounded for $\eta\rightarrow 0$. So finally we use the estimates and the usual change of variables $t=1+\eta t$ to get
\begin{equation} \label{upper_bound:Ome_2_Energy_upper}
\eta \:\E_\epsilon^{2D}(Q_\epsilon,(\Omega_{\theta_i,\eta}^{+}){'}) \leq C\:\eta\, .
\end{equation} Analogously,
\begin{equation} \label{upper_bound:Ome_2_Energy_lower}
\eta \:\E_\epsilon^{2D}(Q_\epsilon,(\Omega_{\theta_i,\eta}^{-}){'}) \leq C\:\eta\, ,
\end{equation} 
since $\phi(r,\theta_0+2\eta)\rightarrow 0$ as $r\rightarrow \infty$ exponentially.

\textit{Step 3 (Construction on $B'$ and $D'$):} 
Throughout this construction, we assume that we are in the same situation as in Step 2, namely that we are switching from $F'$ to $F^c{'}$ as the angle $\theta$ increases. In this situation, we are going to construct a defect of degree $-1/2$. Otherwise, one would need to define a defect of degree $1/2$, i.e.\ one needs to switch the sign of the angle in the definition of $Q(\alpha)$.
\begin{itemize}
\item We first define a map $Q_B$ on the two dimensional ball $B_1(0)$ using polar coordinates as follows
\begin{equation} \label{upper_bound:def_Q_B1}
Q_B(r,\alpha) = \begin{cases}
0  & r \in [0,\epsilon) \\
\left(\frac{r}{\epsilon}-1\right) Q(\alpha) & r \in [\epsilon, 2\epsilon) \\
Q(\alpha)  & r \in [2\epsilon, 1)\, ,  \end{cases} 
\end{equation}
where
$$ Q(\alpha) = s_*\left( \nn(\alpha)\otimes\nn(\alpha) -  \frac{1}{3}\id\right) \quad\text{with}\quad \nn(\alpha) = \begin{pmatrix}
\sin(\alpha/2) \\ 0 \\ \cos(\alpha/2)
\end{pmatrix}\, . $$
\item On $B_1\setminus B_{2\epsilon}$ we calculate
\begin{align*}
\int_{B_1\setminus B_{2\epsilon}} \frac{1}{2}|\nabla' Q_B|^2 \dx x &= \frac{1}{2}\int_0^{2\pi} \int_{2\epsilon}^1 \left(|\partial_r Q_B|^2 + \frac{1}{r^2}|\partial_\alpha Q_B|^2\right) r \dx\alpha\dx r \\
&= \frac{1}{2} \int_{2\epsilon}^1 \frac{1}{r} \dx r \int_0^{2\pi}|\partial_\alpha Q_B|^2 \dx\alpha \\
&= - \ln(2\epsilon) \int_0^{2\pi} s_*^2 \frac{1}{4}(\cos^2(\alpha/2)+\sin^2(\alpha/2)) \dx\alpha \\
&= \frac{\pi}{2} s_*^2 |\ln(\epsilon)| - \frac{\ln(2)\pi}{2}s_*^2\, .
\end{align*}
Furthermore, $f(Q_B)=0$ on $B_1\setminus B_{2\epsilon}$ and $ \int_{B_1\setminus B_{2\epsilon}} g(Q_B) \dx x \leq C |B_1\setminus B_{2\epsilon}| $. This implies
\begin{equation}\label{upper_bound:energy_B1_-_B2eps}
\int_{B_1\setminus B_{2\epsilon}} \frac{1}{2}|\nabla' Q_B|^2 + \frac{1}{\epsilon^2}f(Q_B) + \frac{1}{\eta^2}g(Q_B) \dx x \leq \frac{\pi}{2} s_*^2 |\ln(\epsilon)| + \frac{C_1}{\eta^2} |B_1\setminus B_{2\epsilon}|\, .
\end{equation}
\item On $B_{2\epsilon}\setminus B_\epsilon$ we find
\begin{align*}
\int_{B_{2\epsilon}\setminus B_{\epsilon}} \frac{1}{2}|\nabla' Q_B|^2 \dx x &= \frac{1}{2}\int_0^{2\pi} \int_{\epsilon}^{2\epsilon} \left(|\partial_r Q_B|^2 + \frac{1}{r^2}|\partial_\alpha Q_B|^2\right) r \dx\alpha\dx r \\
&= \frac{1}{2}\int_0^{2\pi} \int_{\epsilon}^{2\epsilon} \left(\frac{1}{\epsilon}-1\right)^2|Q(\alpha)|^2 r + \frac{1}{r}\left(\frac{r}{\epsilon}-1\right)^2|\partial_\alpha Q(\alpha)|^2 \dx r \dx\alpha \\
&= \frac{2}{3}\pi s_*^2 \left(\frac{1}{\epsilon}-1\right)^2 \int_{\epsilon}^{2\epsilon} r \dx r + \frac12\pi s_*^2 \int_{\epsilon}^{2\epsilon} \frac{1}{r}\left(\frac{r}{\epsilon}-1\right)^2 \dx r \\
&= \pi s_*^2 \left(\frac{1}{\epsilon}-1\right)^2 \epsilon^2 + \frac{\pi}{2}s_*^2\left(\ln(2) - \frac{1}{2}\right)  \\
&\leq C\, .
\end{align*}
In addition, $f(Q_B) =0 $ and $\int_{B_{2\epsilon}\setminus B_{\epsilon}} g(Q_B) \dx x \leq C |B_{2\epsilon}\setminus B_{\epsilon}| $. Together, we get
\begin{equation}\label{upper_bound:energy_B2eps_-_Beps}
\int_{B_{2\epsilon}\setminus B_{\epsilon}} \frac{1}{2}|\nabla' Q_B|^2 + \frac{1}{\epsilon^2}f(Q_B) + \frac{1}{\eta^2}g(Q_B) \dx x \leq C_2 \left(1+\frac{1}{\eta^2}\right)|B_{2\epsilon}\setminus B_{\epsilon}|\, .
\end{equation}
Finally,  the gradient of $Q_B$ on $B_\epsilon(0)$ is zero. The contributions from $f$ and $g$ are easily seen to be bounded by $C |B_\epsilon|$, so that
\begin{equation}\label{upper_bound:energy_Beps}
\int_{B_{\epsilon}} \frac{1}{2}|\nabla' Q_B|^2 + \frac{1}{\epsilon^2}f(Q_B) + \frac{1}{\eta^2}g(Q_B) \dx x \leq C_3\left(\frac{1}{\epsilon^2}+\frac{1}{\eta^2}\right)|B_\epsilon|\, .
\end{equation}
\end{itemize}
Combining (\ref{upper_bound:energy_B1_-_B2eps}), (\ref{upper_bound:energy_B2eps_-_Beps}) and (\ref{upper_bound:energy_Beps}) we get
\begin{equation} \label{upper_bound:energy_B}
\int_{B_1(0)} \frac{1}{2}|\nabla' Q_B|^2 + \frac{1}{\epsilon^2}f(Q_B) + \frac{1}{\eta^2}g(Q_B) \dx x \leq \frac{\pi}{2} s_*^2 |\ln(\epsilon)| + C\left(1+\frac{1}{\eta^2}\right)|B_1(0)| + C\, .
\end{equation}
Note that we have the same bound for $Q_{B_{\tilde{r}}}(r,\alpha) = Q_B(r/{\tilde{r}},\alpha)$ on $B_{\tilde{r}}(0)$, where $\tilde{r}\leq 1$. In addition, this bound is invariant under rotations and translations of the domain.
Again we assume that $\theta\in B_\eta(\theta_0)$.
We use the construction of $Q_B$ to define $Q_\epsilon$ on the set $B\defi B_\eta(1+2\eta,\theta_0)\subset [1,1+4\eta]\times [\theta_0-2\eta,\theta_0+2\eta]$ via
\begin{equation} \label{upper_bound:def_Q_B_eta}
Q_\epsilon(r,\theta) = R_{\theta_0} Q_B(\overline{r}/\eta,\alpha)\, ,
\end{equation} where $R_{\theta_0}$ is the rotation matrix around the $\rho-$axis with angle $\theta_0$, $\overline{r}^2 = (r-1-2\eta)^2+(\theta-\theta_0)^2$ and $\alpha$ being the angle between the vectors $(0,1)^\top$ and $(\theta_0-\theta,r-1-2\eta)^\top$. Note, that the term $|B_1(0)|$ in \eqref{upper_bound:energy_B} transforms to $|B|$, which can be estimated by $C\eta^2$. For the remaining term of $\E_\epsilon^{2D}$ we notice that $Q_{2\times 2,\epsilon}:Q_\epsilon$ is bounded on $B$ and that $\rho\geq \sigma-\eta$, thus $\int_B \rho^{-1}Q_{2\times 2,\epsilon}:Q_\epsilon \leq C (\sigma-\eta)^{-1}$. Then, using $\rho\leq (1+2\eta)\sin(\theta_0)+\eta$ we get from (\ref{upper_bound:energy_B}) that
\begin{equation} \label{upper_bound:energy_B_eta}
\eta\:\E_\epsilon^{2D}(Q_\epsilon,B) \leq ((1+2\eta)\sin(\theta_0)+\eta)\frac{\pi}{2} s_*^2 \eta|\ln(\epsilon)| + C\eta + \frac{C}{\sigma-\eta}\eta\, .
\end{equation}

We now want to construct the map $Q_\epsilon$ on the set $D=\{ (r,\theta)\in [1,1+4\eta]\times [\theta_0-2\eta,\theta_0+2\eta] \}\setminus B$ by interpolating between the values given by Steps 1 and 2 on the one hand, and the values on $\partial B$ on the other hand. We use the same polar coordinates  $(\overline{r},\alpha)$ as for the definition of $Q_\epsilon$ on $B$ to parametrize $D$. Let $\Phi_{\alpha/2}(\alpha)$ be the phase associated to the director of $Q_\epsilon(\eta,\alpha)$ and $\Phi(\alpha)$ the phase of the boundary values on $\partial (D\cup B)$. We set
$$ \phi(\overline{r},\alpha) = \frac{R(\alpha)-\overline{r}}{R(\alpha)-\eta}\Phi_{\alpha/2} + \frac{\overline{r}-\eta}{R(\alpha)-\eta}\Phi(\alpha)\, , $$
where 
$$ R(\alpha) = \begin{cases} \frac{2\eta}{|\cos(\alpha)|} & \text{if }\alpha\in [-\pi/4,\pi/4]\cup[3\pi/4,5\pi/4]\, , \\
\frac{2\eta}{|\sin(\alpha)|} & \text{otherwise}\, . \end{cases} $$
In particular, $|R(\alpha)|\leq 2\sqrt{2}\eta$ and $|\partial_\alpha R(\alpha)|\leq 2\sqrt{2}\eta$.
Then we define 
$$ Q_{D}(\overline{r},\alpha) = s_*\left( \nn(\overline{r},\alpha)\otimes\nn(\overline{r},\alpha) -  \frac{1}{3}\id\right) \quad\text{with}\quad \nn(\overline{r},\alpha) = \begin{pmatrix}
\sin(\phi(\overline{r},\alpha)) \\ 0 \\ \cos(\phi(\overline{r},\alpha))
\end{pmatrix}\, . $$

Then $f(Q_\epsilon|_D)=0$ since $Q_\epsilon|_D$ is uniaxial and $g(Q_\epsilon|_D)$ is bounded. We can estimate the gradient
\begin{align} \label{upper_bound:energy_D_first}
\begin{split}
\int_{D} \frac{1}{2}|\nabla' Q_\epsilon|^2 \dx x &= \int_{D} \frac{1}{2}\left(|\partial_r Q_\epsilon|^2 + \frac{1}{r^2}|\partial_\theta Q_\epsilon|^2\right) r \dx r \dx\theta \\
&\leq (1+4\eta) \int_{0}^{2\pi} \int_\eta^{R(\alpha)} \frac{1}{2}\left(|\partial_{\overline{r}} Q_\epsilon|^2 + \frac{1}{\overline{r}^2}|\partial_\alpha Q_\epsilon|^2\right) \overline{r} \dx \overline{r} \dx\alpha \\
&\leq (1+4\eta)s_*^2 \int_{0}^{2\pi} \int_\eta^{R(\alpha)} \left( |\partial_{\overline{r}} \phi|^2 + \frac{1}{\overline{r}^2}|\partial_\alpha\phi|^2 \right) \overline{r} \dx \overline{r} \dx\alpha\, .
\end{split}
\end{align} 

Since $\Phi_{\alpha/2}$ and $\Phi(\alpha)$ are bounded and $\partial_{\overline{r}}\phi = \frac{-1}{R(\alpha)-\eta}\Phi_{\alpha/2} + \frac{1}{R(\alpha)-\eta}\Phi(\alpha)$, we can easily infer that $|\partial_{\overline{r}}\phi|^2\leq\frac{C}{\eta^2}$. 
Furthermore it is clear by definition that $|\partial_\alpha\Phi_{\alpha/2}|^2\leq C$. So it remains to derive bounds on $\partial_\alpha \Phi(\alpha)$.
For $\alpha\in [0,\pi/4]$ we have $\Phi(\alpha) = \arccos(\nn_3(1+4\eta,\theta_0-2\eta)) \frac{\sqrt{R(\alpha)^2-4\eta^2}}{2\eta}$, i.e.\ $|\partial_\alpha \Phi(\alpha)|^2\leq C$. Similarly, $\partial_\alpha \Phi$ is bounded for $\alpha\in [-\pi/4,0]$.
For $\alpha\in [\pi/4,3\pi/4]$ and $r(\alpha) = 1+\sqrt{R^2(\alpha) + 8\eta^2 -4\sqrt{2}R(\alpha)\eta\cos(3\pi/4-\alpha)}$ one can show that $\Phi(\alpha) = \arccos(\nn_3(r(\alpha),\theta_0-2\eta))$. An explicit calculation yields $|\partial_\alpha \Phi(\alpha)|^2\leq C$. By the same argument, $\partial_\alpha \Phi$ is also bounded for $\alpha\in [-3\pi/4,-\pi/4]$
For $\alpha\in [3\pi/4,\pi]$ we have $\Phi(\alpha) = -2\eta\tan(\pi-\alpha)+\theta_0-\frac{\pi}{2}$, so that $|\partial_\alpha \Phi(\alpha)|^2$ is also bounded by a constant.
We plug this result into (\ref{upper_bound:energy_D_first}) and use the fact that $Q_{2\times 2,\epsilon}:Q_\epsilon$ is also bounded and $\sigma\leq 1+4\eta$ to get
\begin{equation} \label{upper_bound:energy_D_last}
\E_\epsilon^{2D}(Q_\epsilon,D) \leq 2(1+4\eta)s_*^2 \int_{0}^{2\pi} \int_\eta^{R(\alpha)} \left( C + \frac{C}{\sigma^2} \right) \sigma \dx \sigma \dx\alpha + \frac{C}{\sigma-c\eta} \leq C + \frac{C}{\sigma-c\eta}\, .
\end{equation}
Hence by (\ref{upper_bound:energy_B_eta}) and (\ref{upper_bound:energy_D_last})
\begin{equation} \label{upper_bound:energy_B_D}
\eta\:\E_\epsilon^{2D}(Q_\epsilon,B\cup D) \leq ((1+2\eta)\sin(\theta_0)+2\eta)\frac{\pi}{2}s_*^2\eta|\ln\epsilon| +  C \eta + \frac{C}{\sigma-C\eta}\eta\, .
\end{equation}
This finishes our construction of $Q_\epsilon(\rho,\theta)$. If we now extend $Q_\epsilon$ to $\Omega$ by using the rotated function $Q_\epsilon(\rho,\varphi,\theta) = R_\varphi^\top Q_\epsilon(\rho,\theta) R_\varphi$ and integrate $\E_\epsilon^{2D}$ in $\varphi$-direction, we get from (\ref{upper_bound:F_Energy}), (\ref{upper_bound:F^c_Energy}), (\ref{upper_bound:Ome_2_Energy_upper}), (\ref{upper_bound:Ome_2_Energy_lower}) and (\ref{upper_bound:energy_B_D})
\begin{equation}\label{upper_bound:energy_total_3d}
\begin{split}
\eta\E_\epsilon(Q_\epsilon,\Omega) &\leq \sqrt[4]{24}s_*\int_0^{2\pi}\int_{F'} (1-\cos(\theta))\sin(\theta) \dx\theta\dx\varphi + \sqrt[4]{24}s_*\int_0^{2\pi}\int_{F^c{'}} (1+\cos(\theta))\sin(\theta) \dx\theta\dx\varphi\\
&+ \frac{\pi}{2}s_*^2\eta|\ln\epsilon|\sum_{i=0}^{M-1}\int_0^{2\pi}((1+2\eta)\sin(\theta_i)+2\eta)\dx\varphi + C\eta + \frac{C\eta}{\sigma-c\eta}\, .
\end{split}
\end{equation}
Taking the limsup $\eta,\epsilon\rightarrow 0$ in (\ref{upper_bound:energy_total_3d}) yields the inequality 
\begin{align*}
\limsup_{\eta,\epsilon\rightarrow 0} \Eex(Q_\epsilon) &\leq \sqrt[4]{24}s_*\int_{F} (1-\cos(\theta)) \dx\omega + \sqrt[4]{24}s_*\int_{F^c} (1+\cos(\theta)) \dx\omega + \frac{\pi}{2}s_*^2\beta |D\chi_F|(\mathbb{S}^2) \\
&= \E_0(F)\, .
\end{align*}

It remains to show the claimed convergence. It is clear by definition of $Q_\epsilon$ that $\bigcup_{\eta>0} F_\eta = F$ and $\bigcup_{\eta>0} (F^c)_\eta = F^c$ which implies the convergence for $\chi_F$. The continuity of $\nn^\epsilon$ as a function with values in $\mathbb{S}^2$ outside a set $\omega_\eta$ is clear by construction if we choose $\omega_\eta$ to contain all balls $B$, we used in step 3. Taking $\omega_\eta$ as the union of all sets $B$ and $D$ from step 3. we can also achieve that $\Omega\setminus\omega_\eta$ is simply connected. Extending $\nn^\epsilon$ inside $B$ measurably, yields the compactness claim.
\end{proof}

\begin{proof}[Proof of the upper bound (\ref{eq:upper_bound}) of Theorem \ref{thm:main}] We choose a sequence $\sigma_k>0$ which converges to zero as $k\rightarrow\infty$. We approximate the set $F$ by sets $F_k$ such that the domains $\mathbb{S}^2\cap\{\rho\leq \sigma_k,z> 0\}$ and $\mathbb{S}^2\cap\{\rho\leq \sigma_k,z< 0\}$ are fully contained in $F_k$ or $F_k^c$. By Lemma \ref{lem:upper_bound_construction} there exist sequences $Q_{\epsilon,k}$ such that $\limsup_{\eta,\epsilon\rightarrow 0} \Eex(Q_{\epsilon,k})\leq \E_0(F_k)$ and (\ref{thm:main:cptness}) holds. We observe that
\begin{align*}
|D\chi_{F_k}|(\mathbb{S}^2) = |D\chi_{F_k}|(\mathbb{S}^2\cap\{\rho\geq \sigma_k\}) = |D\chi_{F}|(\mathbb{S}^2\cap\{\rho\geq \sigma_k\})
\end{align*}
and
\begin{align*}
\left| \int_{F} \big(1-\cos(\theta)\big) \dx\omega - \int_{F_k} \big(1-\cos(\theta)\big) \dx\omega \right|,\left| \int_{F^c} \big(1+\cos(\theta)\big) \dx\omega - \int_{F_k^c} \big(1+\cos(\theta)\big) \dx\omega \right| \leq C \sigma_k^2\, .
\end{align*}
Hence $\limsup_{\eta,\epsilon\rightarrow 0} \Eex(Q_{\epsilon,k})\leq \E_0(F_k) \leq \E_0(F) + C \sigma_k^2$ and taking a diagonal sequence $Q_\epsilon=Q_{\epsilon,k(\epsilon)}$ we get
$$ \limsup_{\eta,\epsilon\rightarrow 0} \Eex(Q_\epsilon)\leq \E_0(F)\, . $$
The compactness (\ref{thm:main:cptness}) follows by triangle inequality.
\end{proof}

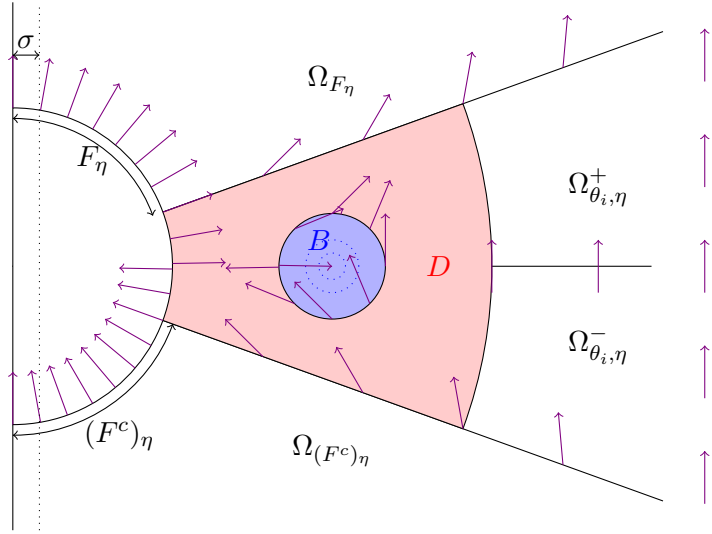
\begin{figure}[H]
\begin{center}
\begin{tikzpicture}[scale=0.7]
\draw[] (0,0) -- (0:12cm);
\draw[] (0,0) -- (-20:13cm);
\draw[] (0,0) -- (20:13cm);
\fill[red!20!white] (0,0) -- (-20:9cm) arc (-20:20:9cm) -- (0,0);
\draw[] (0,0) -- (-20:9cm) arc (-20:20:9cm) -- (0,0);

\draw[fill=blue!30!white] (6,0) circle (1cm);
\draw[blue,dotted] (6,0) circle (0.25cm);
\draw[blue,dotted] (6,0) circle (0.5cm);

\draw[black] (0,3)--(0,5);
\draw[black] (0,-3)--(0,-5);
\fill[white] (0,0) -- (90:3cm) arc (90:-90:3cm) -- cycle;
\draw[black] (0,0) -- (90:3cm) arc (90:-90:3cm) -- cycle;

\draw[violet, ->]  (13,3.5) -- (13,4.5);
\draw[violet, ->]  (13,1.5) -- (13,2.5);
\draw[violet, ->]  (13,-0.5) -- (13,0.5);
\draw[violet, ->]  (13,-2.5) -- (13,-1.5);
\draw[violet, ->]  (13,-4.5) -- (13,-3.5);
\draw[violet, ->]  (11,-0.5) -- (11,0.5);
\draw[violet, ->]  (9,-0.5) -- (9,0.5);
\draw[violet,->] (20:5cm) -- +(45:1);
\draw[violet,->] (20:7cm) -- +(60:1);
\draw[violet,->] (20:9cm) -- +(80:1);
\draw[violet,->] (20:11cm) -- +(85:1);
\draw[violet,->] (-20:5cm) -- +(135:1);
\draw[violet,->] (-20:7cm) -- +(120:1);
\draw[violet,->] (-20:9cm) -- +(100:1);
\draw[violet,->] (-20:11cm) -- +(95:1);
\draw[violet, rotate=-90, ->]  (3,0) -- (2,0);
\draw[violet, rotate=-80, ->]  (3,0) -- (2,0);
\draw[violet, rotate=-70, ->]  (3,0) -- (2,0);
\draw[violet, rotate=-60, ->]  (3,0) -- (2,0);
\draw[violet, rotate=-50, ->]  (3,0) -- (2,0);
\draw[violet, rotate=-40, ->]  (3,0) -- (2,0);
\draw[violet, rotate=-30, ->]  (3,0) -- (2,0);
\draw[violet, rotate=-20, ->]  (3,0) -- (2,0);
\draw[violet, rotate=-10, ->]  (3,0) -- (2,0);
\draw[violet, rotate=-1, ->]  (3,0) -- (2,0);
\draw[violet, rotate=90, ->]  (3,0) -- (4,0);
\draw[violet, rotate=80, ->]  (3,0) -- (4,0);
\draw[violet, rotate=70, ->]  (3,0) -- (4,0);
\draw[violet, rotate=60, ->]  (3,0) -- (4,0);
\draw[violet, rotate=50, ->]  (3,0) -- (4,0);
\draw[violet, rotate=40, ->]  (3,0) -- (4,0);
\draw[violet, rotate=30, ->]  (3,0) -- (4,0);
\draw[violet, rotate=20, ->]  (3,0) -- (4,0);
\draw[violet, rotate=10, ->]  (3,0) -- (4,0);
\draw[violet, rotate=1, ->]  (3,0) -- (4,0);
\draw[violet, rotate around={179:(6,0)}, ->]  (7,0) -- +(-180:1);
\draw[violet, rotate around={135:(6,0)}, ->]  (7,0) -- +(-112.5:1);
\draw[violet, rotate around={90:(6,0)}, ->]  (7,0) -- +(-45:1);
\draw[violet, rotate around={45:(6,0)}, ->]  (7,0) -- +(22.5:1);
\draw[violet, rotate around={0:(6,0)}, ->]  (7,0) -- +(90:1);
\draw[violet, rotate around={-45:(6,0)}, ->]  (7,0) -- +(157.5:1);
\draw[violet, rotate around={-90:(6,0)}, ->]  (7,0) -- +(225:1);
\draw[violet, rotate around={-135:(6,0)}, ->]  (7,0) -- +(292.5:1);
\draw[violet, rotate around={-179:(6,0)}, ->]  (7,0) -- +(360:1);

\node[blue] at (5.75,0.5) {$B$};
\node[red] at (8,0) {$D$};

\node[] at (6,3.5) {$\Omega_{F_\eta}$};
\node[] at (6,-3.5) {$\Omega_{(F^c)_\eta}$};
\draw[<->] (0,2.8) arc (90:20:2.8cm);
\node[] at (1.5,2) {$F_\eta$};
\draw[<->] (0,-3.2) arc (-90:-20:3.2cm);
\node[] at (2,-3.2) {$(F^c)_\eta$};

\node[] at (11,1.5) {$\Omega_{\theta_i,\eta}^{+}$};
\node[] at (11,-1.5) {$\Omega_{\theta_i,\eta}^{-}$};

\draw[black,dotted] (0.5,-5)--(0.5,5);
\draw[black, <->]  (0,4) -- (0.5,4);
\node at (0.25,4.25) {$\sigma$};
\end{tikzpicture} 
\end{center}
\caption{Partition of $\Omega'$ into regions for the construction of $Q_\epsilon$ (arrows show $\nn^\epsilon$)}
\label{fig:upper_bound_construction}
\end{figure}

\section{Limit problem, transition and hysteresis}

This last section is devoted to the study of the limit functional. In particular we are going to study the minimizing configurations for different values of $\beta$. 

In a first step, we claim that if $F$ is a minimizer of $\E_0$, then $F$ and $F^c$ are connected. Indeed, assume that one of the two sets, say $F$, is not connected. Then there are two possibilities: If $F^c$ is connected, then $F$ also contains the point $\theta=\pi$ and we can decrease the energy $\E_0$ by handing over this set to $F^c$. If $F^c$ is also not connected, then we can similarly exchange points between $F$ and $F^c$ while decreasing the energy until both sets are connected.

Now that we know that $F$ and $F^c$ are connected, we deduce that there can only be one angle under which the defect occurs. Let us name this angle $\theta_d\in [0,\pi]$ and let $F\subset \mathbb{S}^2$ be the set corresponding to $0\leq \theta\leq \theta_d$. Then we can express the limit energy as
\begin{align*}
\E_0(F) &= \sqrt[4]{24}s_*\int_F (1 - \cos(\theta)) \dx\omega + \sqrt[4]{24}s_*\int_{F^c} (1 + \cos(\theta)) \dx\omega + \frac{\pi}{2}s_*^2\beta |D\chi_F|(\mathbb{S}^2) \\
&= \sqrt[4]{24}s_*\int_0^{2\pi} \int_0^{\theta_d} (1 - \cos(\theta))\sin(\theta) \dx\theta \dx\varphi + \sqrt[4]{24}s_*\int_0^{2\pi} \int_{\theta_d}^\pi (1 + \cos(\theta))\sin(\theta) \dx\theta \dx\varphi \\
&\:\:\:\:\:+ \frac{\pi}{2}s_*^2\beta (2\pi\sin(\theta_d)) \\
&= 4\sqrt[4]{24}\pi s_* \Big( \sin^4(\theta_d/2) + \cos^4(\theta_d/2)\Big) + \pi^2 \beta s_*^2 \sin(\theta_d)\, .
\end{align*}
Setting the derivative of this expression to zero  gives the equation
\begin{align*}
\pi s_* \cos(\theta_d)\Big( \pi \beta s_* - 4\sqrt[4]{24} \sin(\theta_d)\Big) = 0\, ,
\end{align*} which yields the two families of solutions $\theta_1 = \pi/2 + \pi\mathbb{Z}$ and $\theta_2 = \arcsin(\frac{\pi\beta s_*}{4\sqrt[4]{24}}) + 2\pi\mathbb{Z}$. We note:
\begin{enumerate}
\item For $\beta s_* = \frac{4\sqrt[4]{24}}{\pi}\approx 2.818$, the two families are equal. We conclude that for $\beta s_* \geq \frac{4\sqrt[4]{24}}{\pi}$ the only stable configuration is a dipole at $\theta_d=0,\pi$ (see Figure \ref{fig:energy_landscape}).
\item The energy of the saturn ring $\theta_d=\pi/2$ and the dipole $\theta_d=0$ are equal for $\beta s_* = \frac{2\sqrt[4]{24}}{\pi} \approx 1.409$, which means for greater values of $\beta s_*$ the dipole is the globally energy minimizing configuration, while for smaller values the saturn ring is optimal.
\item The case where $\theta_d=\pi/2$ is the only (local) minimizer corresponds to $\beta s_*=0$, i.e.\ $\theta_2=0$.
\end{enumerate}

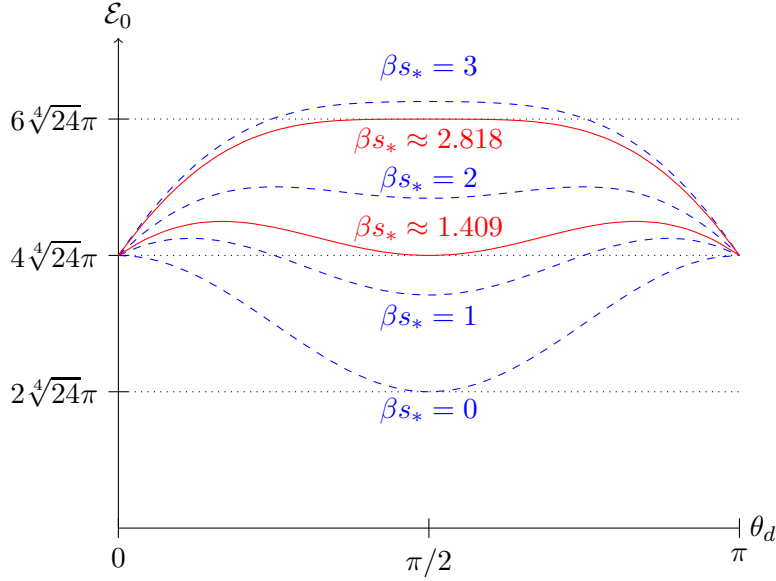
\begin{figure}[H]
\begin{center}
\begin{tikzpicture}[scale=1.3]
\pgfmathsetmacro{\a}{0.1} 
\draw[->] (0,0) -- (0,5) node[above] {$\E_0$};
\draw[] (0.1,3*\a*13.907) -- (-0.1,3*\a*13.907) node[left] {$6\sqrt[4]{24}\pi$};
\draw[dotted] (0.1,3*\a*13.907) -- (2*3.1415,3*\a*13.907);
\draw[] (0.1,2*\a*13.907) -- (-0.1,2*\a*13.907) node[left] {$4\sqrt[4]{24}\pi$};
\draw[dotted] (0.1,2*\a*13.907) -- (2*3.1415,2*\a*13.907);
\draw[] (0.1,\a*13.907) -- (-0.1,\a*13.907) node[left] {$2\sqrt[4]{24}\pi$};
\draw[dotted] (0.1,\a*13.907) -- (2*3.1415,\a*13.907);
\draw[-] (0,0) -- (2*3.1415,0) node[right] {$\theta_d$};
\draw[] (0,0.1) -- (0,-0.1) node[below] {$0$};
\draw[] (3.1415,0.1) -- (3.1415,-0.1) node[below] {$\pi/2$};
\draw[] (2*3.1415,0.1) -- (2*3.1415,-0.1) node[below] {$\pi$};
\foreach \b in {0,1,2,3}{
\draw[domain=0:2*3.1415,smooth,variable=\x,blue,dashed] plot ({\x},{  \a*4*pi*pow(24,1./4)*pow(sin(deg(\x/4)),4) + \a*4*pi*pow(24,1./4)*pow(cos(deg(\x/4)),4) + \a*pi*pi*\b*sin(deg(\x/2))  });
} 
\pgfmathsetmacro{\b}{2.818} 
\draw[domain=0:2*3.1415,smooth,variable=\x,red] plot ({\x},{  \a*4*pi*pow(24,1./4)*pow(sin(deg(\x/4)),4) + \a*4*pi*pow(24,1./4)*pow(cos(deg(\x/4)),4) + \a*pi*pi*\b*sin(deg(\x/2))  });
\pgfmathsetmacro{\b}{1.409} 
\draw[domain=0:2*3.1415,smooth,variable=\x,red] plot ({\x},{  \a*4*pi*pow(24,1./4)*pow(sin(deg(\x/4)),4) + \a*4*pi*pow(24,1./4)*pow(cos(deg(\x/4)),4) + \a*pi*pi*\b*sin(deg(\x/2))  });
\node[blue] at (3.14,4.7) {$\beta s_*=3$};
\node[red] at (3.14,3.95) {$\beta s_*\approx 2.818$};
\node[blue] at (3.14,3.55) {$\beta s_*=2$};
\node[red] at (3.14,3.05) {$\beta s_*\approx 1.409$};
\node[blue] at (3.14,2.15) {$\beta s_*=1$};
\node[blue] at (3.14,1.2) {$\beta s_*=0$};
\end{tikzpicture}
\end{center}
\caption{Plot of the energy $\E_0$ for different values of $\beta s_*$ as a function of the angle $\theta_d$}
\label{fig:energy_landscape}
\end{figure}

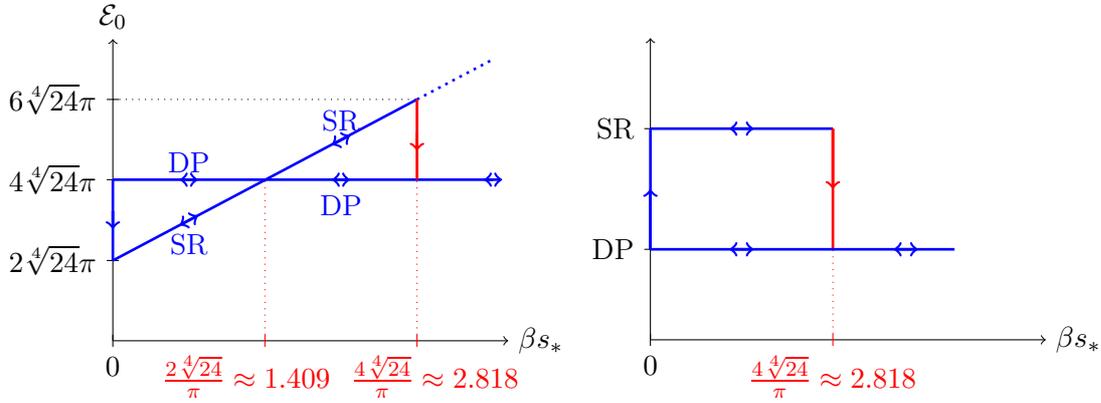
\begin{figure}[H]
\begin{tikzpicture}[scale=0.8]
\draw[->] (0,0) -- (0,5) node[above] {$\E_0$};
\draw[] (0.1,1.33) -- (-0.1,1.33) node[left] {$2\sqrt[4]{24}\pi$};
\draw[] (0.1,2.67) -- (-0.1,2.67) node[left] {$4\sqrt[4]{24}\pi$};
\draw[] (0.1,4) -- (-0.1,4) node[left] {$6\sqrt[4]{24}\pi$};
\draw[dotted] (0,4) -- (5,4);
\draw[->] (0,0) -- (6.5,0) node[right] {$\beta s_*$};
\draw[] (0,0.1) -- (0,-0.1) node[below] {$0$};
\draw[red] (2.5,0.1) -- (2.5,-0.1);
\node[red] at (2.2,-0.6) {$\frac{2\sqrt[4]{24}}{\pi}\approx 1.409$};
\draw[red,dotted] (2.5,0.1) -- (2.5,2.67);
\draw[red] (5,0.1) -- (5,-0.1);
\node[red] at (5.3,-0.6) {$\frac{4\sqrt[4]{24}}{\pi}\approx 2.818$};
\draw[red,dotted] (5,0.1) -- (5,2.67);

\draw[-,color=blue,line width=1] (0,2.67) -- (6.25,2.67) node[above] {};
\draw[<->,color=blue,line width=1] (1.1,2.67) -- (1.4,2.67) node[above] {};
\draw[<->,color=blue,line width=1] (3.6,2.67) -- (3.9,2.67) node[above] {};
\draw[<->,color=blue,line width=1] (6.1,2.67) -- (6.4,2.67) node[above] {};
\node[blue] at (1.25,2.95) {DP};
\node[blue] at (3.75,2.25) {DP};

\draw[-,color=blue,line width=1] (0,1.33) -- (5,4) node[above] {};
\draw[-,color=blue,line width=1,dotted] (5,4) -- (6.25,4.665) node[above] {};
\draw[<->,color=blue,line width=1] (1.1,1.9152) -- (1.4,2.0748) node[above] {};
\draw[<->,color=blue,line width=1] (3.6,3.2452) -- (3.9,3.4048) node[above] {};
\node[blue] at (1.25,1.6) {SR};
\node[blue] at (3.75,3.65) {SR};

\draw[-,color=red,line width=1] (5,2.67) -- (5,4) node[above] {};
\draw[<-,color=red,line width=1] (5,3.15) -- (5,3.45) node[above] {};

\draw[-,color=blue,line width=1] (0,2.67) -- (0,1.33) node[above] {};
\draw[<-,color=blue,line width=1] (0,1.85) -- (0,2.15) node[above] {};
\end{tikzpicture}
\begin{tikzpicture}[scale=0.8]
\draw[->] (0,0) -- (0,5) node[above] {};
\draw[] (0.1,1.5) -- (-0.1,1.5) node[left] {DP};
\draw[] (0.1,3.5) -- (-0.1,3.5) node[left] {SR};
\draw[->] (0,0) -- (6.5,0) node[right] {$\beta s_*$};
\draw[] (0,0.1) -- (0,-0.1) node[below] {$0$};
\draw[red] (3,0.1) -- (3,-0.1) node[below] {$\frac{4\sqrt[4]{24}}{\pi}\approx 2.818$};
\draw[red,dotted] (3,0.1) -- (3,1.5);

\draw[-,color=blue,line width=1] (0,3.5) -- (3,3.5) node[above] {};
\draw[<->,color=blue,line width=1] (1.3,3.5) -- (1.7,3.5) node[above] {};

\draw[-,color=red,line width=1] (3,3.5) -- (3,1.5) node[above] {};
\draw[->,color=red,line width=1] (3,3.5) -- (3,2.5) node[above] {};

\draw[-,color=blue,line width=1] (5,1.5) -- (0,1.5) node[above] {};
\draw[<->,color=blue,line width=1] (4,1.5) -- (4.4,1.5) node[above] {};
\draw[<->,color=blue,line width=1] (1.7,1.5) -- (1.3,1.5) node[above] {};

\draw[-,color=blue,line width=1] (0,1.5) -- (0,3.5) node[above] {};
\draw[->,color=blue,line width=1] (0,1.5) -- (0,2.5) node[above] {};
\end{tikzpicture}
\caption{Left: Plot of the energy of the dipole and saturn ring as a function of $\beta s_*$. Right: Hysteresis induced by changing $\beta s_*$}
\label{fig:hysteresis}
\end{figure}

Our analysis confirms the numerical simulations by H. Stark \cite{Stark1999} (see also \cite{Lavrentovich2001} for similar problems) as well as the physical observation, e.g.\ \cite[p.190ff]{Antonietti2003}. The reduced magnetic coherence length $\xi_H$ introduced in \cite{Stark1999} corresponds to our parameter $\eta$ in the one constant approximation. The assumption of high magnetic fields $\xi_H\ll 1$ translates to our limit $\eta\rightarrow 0$. Although the calculations in \cite{Stark1999} are based on the Oseen-Frank model rather than the Landau-de Gennes that we are using, we are able to reproduce the behaviour of the energy $\E_0$ as a function of $\theta_d$, compare Figure \ref{fig:energy_landscape} and  \cite[Fig. 11]{Stark1999}. From our calculation, we also find the hysteresis for changing values of $\beta s_*$. For $\beta\gg 1$, i.e.\ small external fields, the dipole is the only stable configuration. Increasing the field, the system will maintain the dipole, until we reach $\beta=0$, where a transition to the saturn ring takes place. Decreasing the field while starting from a saturn ring, we will retain the structure until we reach $\beta s_*\approx 2.818$ and the saturn ring closes to a dipole.

\ifARMA
\section*{Declarations}
\paragraph{Funding.} This study was funded by École Polytechnique and CNRS.
\paragraph{Conflict of Interest.} The authors declare that they have no conflict of interest.
\paragraph{Availability of data and material.} Not applicable
\paragraph{Code availability.} Not applicable
\fi
\paragraph{Acknowledgment.} DS thanks Xavier Lamy for the useful discussions at several occasions.

\addcontentsline{toc}{section}{References}
\bibliographystyle{abbrv}
\bibliography{LC_SR_rotsym}

\end{document}